\documentclass{amsart}

\usepackage{hyperref}
\usepackage{epsfig}
\usepackage{appendix}
\usepackage{amsfonts,amssymb,amsmath,amsthm,amscd}
\usepackage{latexsym}
\usepackage{mathtools}
\usepackage{graphicx}
\usepackage[all, cmtip]{xy}

\setcounter{tocdepth}{1}

\theoremstyle{plain}
		\newtheorem{theorem}{Theorem}[section]
		\newtheorem{lemma}[theorem]{Lemma}
		\newtheorem{corollary}[theorem]{Corollary}
		\newtheorem{proposition}[theorem]{Proposition}

		\newtheorem{fact}[theorem]{Fact}
\theoremstyle{definition}
		\newtheorem{definition}[theorem]{Definition}

\theoremstyle{remark}
		\newtheorem*{remark}{Remark}
		\newtheorem{example}[theorem]{Example}
		
\newcommand{\A}{{\mathbb{A}}}

\newcommand{\C}{{\mathbb{C}}}

\newcommand{\G}{{\mathbb{G}}}

\newcommand{\M}{{\mathbb{M}}}

\renewcommand{\P}{{\mathbb{P}}}
\newcommand{\Q}{{\mathbb{Q}}}
\newcommand{\R}{{\mathbb{R}}}

\newcommand{\Z}{{\mathbb{Z}}}

\newcommand{\Cb}{{\mathbf{C}}}
\newcommand{\Db}{{\mathbf{D}}}

\newcommand{\Ib}{{\mathbf{I}}}

\newcommand{\Lb}{{\mathbf{L}}}

\newcommand{\Ocal}{{\mathcal{O}}}

\newcommand{\pfrak}{{\mathfrak{p}}}

\newcommand{\Cfrak}{{\mathfrak{C}}}

\newcommand{\Char}{\textup{Char}}

\newcommand{\Crit}{\textup{Crit}}

\newcommand{\del}{\partial}

\DeclareMathOperator{\Div}{Div}

\DeclareMathOperator{\Fil}{Fil}
\DeclareMathOperator{\Fin}{Fin}

\DeclareMathOperator{\Gr}{Gr}

\DeclareMathOperator{\Hom}{Hom}

\DeclareMathOperator{\sheafhom}{\mathcal{H}\kern -.5pt \emph{om}}
\DeclareMathOperator{\Img}{Im}

\DeclareMathOperator{\length}{length}

\DeclareMathOperator{\modulo}{\:\textup{mod}\:}

\DeclareMathOperator{\Proj}{Proj}

\DeclareMathOperator{\Rep}{Rep}

\DeclareMathOperator{\SL}{SL}

\DeclareMathOperator{\Span}{Span}
\DeclareMathOperator{\Spec}{Spec}

\DeclareMathOperator{\tr}{tr}

\newcommand{\git}{\mathbin{
  \mathchoice{/\mkern-6mu/}% \displaystyle
    {/\mkern-6mu/}% \textstyle
    {/\mkern-5mu/}% \scriptstyle
    {/\mkern-5mu/}}}% \scriptscriptstyle

\begin{document}
\title[Global geometry on moduli of local systems]{Global geometry on moduli of local systems\\for surfaces with boundary}
\author{Junho Peter Whang}
\email{jwhang@mit.edu}
\address{Massachusetts Institute of Technology, Department of Mathematics, Simons Building Room 2-238A, 77 Massachusetts Avenue, Cambridge, MA 02139-4307}

\subjclass[2010]{Primary: 14J32; Secondary: 57M05}

\keywords{Character variety; compactification; log Calabi-Yau; surface; multicurve}

\begin{abstract}
We show that every coarse moduli space, parametrizing complex special linear rank two local systems with fixed boundary traces on a surface with nonempty boundary, is log Calabi-Yau in that it has a normal projective compactification with trivial log canonical divisor. We connect this to a novel symmetry of generating series for counts of essential multicurves on the surface.
\end{abstract}

\maketitle

\section{Introduction} \label{sect:1}

\subsection{Main results} \label{sect:1.1}
Let $\Sigma_{g,n}$ be a compact oriented surface of genus $g$ with $n\geq1$ boundary curves satisfying $3g+n-3>0$. Let $X_{g,n}$ be the coarse moduli space of $\SL_2(\C)$-local systems on $\Sigma_{g,n}$, also called the (full) character variety. Let $X_{g,n,k}$ be the subvariety of $X_{g,n}$ obtained by prescribing the boundary traces $k\in\A^n(\C)$. Our main result is that each $X_{g,n,k}$ is log Calabi-Yau in the following sense.
\begin{theorem}\label{mtheorem}
Each $X_{g,n,k}$ has a normal irreducible projective compactification $Z$ with canonical divisor $K_Z$ and reduced boundary divisor $D$ satisfying $K_Z+D\sim0$.
\end{theorem}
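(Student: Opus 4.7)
The plan is to construct $\overline{X}$ explicitly as a $\Proj$ of a Rees-type algebra built from a word-length filtration on $\Cb[X]$, and to verify the log Calabi-Yau property by producing a canonical algebraic volume form on $X$ that has simple poles along the compactifying divisor. Each $X_{g,n,k}$ is cut out of the Poisson variety $X_{g,n}$ by fixing boundary traces and therefore lies inside a symplectic leaf of Goldman's Poisson structure; the top exterior power of Goldman's symplectic form provides a nowhere-vanishing algebraic top form $\omega$ on the smooth locus of $X$.

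For the compactification, I would filter $\Cb[X]$ by letting $F_N \Cb[X]$ be the subspace spanned by products of trace functions $\tr_\gamma$ of simple multicurves $\gamma$ of total word length at most $N$; that such products span $\Cb[X]$ is standard skein theory. The Rees algebra $R = \bigoplus_N F_N \Cb[X] \cdot t^N$ is a finitely generated graded $\Cb$-algebra, and I set $\overline{X} := \Proj(R)$. By construction $X$ is the affine open $\{t \neq 0\}$ of $\overline{X}$, and the boundary $D := \{t = 0\}$ is a Cartier divisor. Irreducibility of $\overline{X}$ follows from irreducibility of $X$; normality should reduce to reducedness of the associated graded $R/(t)$, which can be established via an explicit multicurve basis for $\Cb[X]$.

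The crux of the argument is to extend $\omega$ to $\overline{X}$ and to show it has simple poles along each component of $D$ and no other zeros or poles, which gives $K_{\overline{X}} + D \sim 0$. This is where the universal symmetry of the multicurve generating series announced in the abstract plays a decisive role: viewing the Hilbert series of $R/(t)$ as a generating function for simple multicurves weighted by word length, a palindromic functional equation of weight one translates, by a duality argument on $\overline{X}$, into the statement that $\omega$ has pole of order exactly one along each boundary component.

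The main obstacle will be controlling the associated graded $R/(t)$ precisely enough to (i) verify normality of $R$ and (ii) prove the palindromic functional equation. Both reduce to understanding how products $\tr_\gamma \cdot \tr_\delta$ of simple multicurve trace functions decompose, with sharp control over the top-order terms in word length. The skein-theoretic description of the product on $\Cb[X]$ together with the combinatorics of multicurve intersection numbers should provide the main tools, but matching this combinatorial structure to the geometric statement $K_{\overline{X}} + D \sim 0$ is the heart of the contribution.
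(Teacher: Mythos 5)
Your construction of $\overline X$ agrees with the paper's: filter the coordinate ring by word length, take the Rees algebra, and set $\overline X=\Proj$, with $D=\{t=0\}$; and your instinct that the weight-one palindromic symmetry of the multicurve/Hilbert series is "equivalent" to the log Calabi-Yau property is morally the right picture (the paper proves exactly such an equivalence, and even gives a purely combinatorial proof of the symmetry for punctured spheres). But the mechanism you propose for the crucial step $K+D\sim0$ has genuine gaps. First, a symmetric Hilbert series by itself carries no information about the canonical module: to convert the functional equation into Gorenstein-ness one needs the Rees algebra to be a Cohen--Macaulay domain (Stanley's theorem), and Cohen--Macaulayness is not free -- in the paper it comes from Hochster--Roberts applied to $(\Cb[\Mb^m]^{\deg})^{\SL_2}$ together with the verification that the boundary-trace elements $F_1,\dots,F_n$ form a regular sequence. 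Your proposal never addresses this, nor does it address where the functional equation itself comes from (the paper gets it from Le Bruyn's invariant-theoretic result, or combinatorially in genus zero); "a duality argument on $\overline X$" presupposes precisely the Gorenstein/Serre-duality structure you are trying to establish. Second, your claim that normality of the Rees algebra "reduces to reducedness of $R/(t)$" is not correct as stated: the paper's criterion (Lemma \ref{2l2}) needs, in addition, that the total ring is Cohen--Macaulay, that the open fiber $X_{g,n,k}$ is itself normal, and that the central fiber is reduced \emph{of the right dimension}. Normality of $X_{g,n,k}$ and the dimension/reducedness of the degenerate fiber are the content of the paper's Section 4, a substantial dimension count on critical loci of the trace maps on $\Mb_t^m$; none of this is visible in your plan, and it does not follow from the multicurve basis.

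Finally, even granting a Gorenstein normal Rees algebra with canonical module shifted by $-1$, passing to the divisor-level statement $K_{\overline X}+D\sim0$ on $\Proj$ is not automatic: the paper uses Demazure's description of normal graded rings by ample $\Qb$-divisors and Watanabe's formula for the canonical module, plus a separate argument (Lemma \ref{deglem}, using the trace identities to rule out components with $q_F=3$) to identify the resulting $\Qb$-divisor combination with the reduced boundary $D$. Your alternative route via the top power of Goldman's symplectic form would additionally require showing that this form extends as a generator of the dualizing sheaf across the (nonempty) singular locus of $X_{g,n,k}$ and acquires poles of order exactly one along every boundary component -- which is essentially the theorem itself, not an input. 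So the compactification is right, but the heart of the proof (Cohen--Macaulayness, normality of the relative character and representation varieties, and the Demazure--Watanabe computation of $K$) is missing.
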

In particular, $X_{g,n,k}$ is normal and irreducible with trivial canonical divisor. This confirms a special case of the folklore conjecture that (relative) character varieties of surfaces are log Calabi-Yau; see remarks below. It is also closely related to the following combinatorial result about curves on surfaces. Let us recall the standard presentation of the fundamental group
\begin{align}
\label{sp}
\pi_1(\Sigma_{g,n})=\langle a_1,\dots,a_{2g+n}|[a_1,a_2]\dotsm[a_{2g-1},a_{2g}]a_{2g+1}\dotsm a_{2g+n}\rangle.
\end{align}
Since $n\geq1$ by assumption, $\pi_1(\Sigma_{g,n})$ is freely generated by the collection of simple loops $\sigma=\{a_1,\dots,a_{2g+n-1}\}$. Given a simple closed curve $a\subset\Sigma_{g,n}$, let $\length_{\sigma}(a)$ be the minimum $\sigma$-word length of any element $b\in \pi_1(\Sigma_{g,n})$ freely homotopic to a parametrization of $a$. This notion extends additively to \emph{multicurves} on $\Sigma_{g,n}$, which are finite disjoint unions of simple closed curves on $\Sigma_{g,n}$. We shall say that a multicurve is \emph{nondegenerate} if none of its components is contractible, and is \emph{essential} if it is nondegenerate and none of its components is isotopic to a boundary curve of $\Sigma_{g,n}$. Consider the formal power series $Z_{g,n}(t)=\sum_{r=0}^\infty c_{g,n}(r)t^r$ where $c_{g,n}(r)$ denotes the number of isotopy classes of essential multicurves $Q\subset \Sigma_{g,n}$ with $\length_\sigma(Q)=r$.

\begin{theorem}
\label{mtheorem2}
The series $Z_{g,n}(t)$ is rational, and satisfies the symmetry
$$Z_{g,n}(1/t)=Z_{g,n}(t).$$
\end{theorem}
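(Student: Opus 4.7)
The plan is to realize $Z_{g,n}(t)$ as a variant of the Hilbert series of $\Ocal(X_{g,n,k})$ equipped with a word-length filtration, and to extract both rationality and the symmetry from Theorem~\ref{mtheorem}.

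The first input is the classical fact (Bullock, Przytycki--Sikora, via the Kauffman bracket skein module at $q=-1$) that isotopy classes of non-peripheral multicurves on $S_{g,n}$ index a $\C$-basis $\{T_a\}$ of $\Ocal(X_{g,n,k})$, where $T_a$ is a signed product of trace functions of the components of $a$. Define $F_r \subset \Ocal(X_{g,n,k})$ to be the span of all $t_b$ with $\ell_\sigma(b) \leq r$, together with their products. The combinatorial claim to verify is that $T_a \in F_{\ell_\sigma(a)}$ and that the multicurves of length exactly $r$ project to a basis of $F_r/F_{r-1}$; this follows by induction on word length from the $\SL_2$ trace identity $t_b t_c = t_{bc} + t_{bc^{-1}}$, applied to decompose products of trace functions into shorter ones. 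It then follows that
\[
Z_{g,n}(t) = (1-t)\sum_{r \geq 0}(\dim F_r)\,t^r.
\]

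Next, I would match $F_\bullet$ with a geometric filtration on the compactification $\overline{X}$ from Theorem~\ref{mtheorem}. Concretely, identify $F_r$ with $H^0(\overline{X}, rD')$ for some effective $\Q$-Cartier divisor $D' = \sum \lambda_i D_i$ supported on the boundary $D = \overline{X} \setminus X$, by showing that the weighted pole order $\sum \lambda_i \ord_{D_i}(t_b)$ equals $\ell_\sigma(b)$. Granting this, rationality of $Z_{g,n}(t)$ follows from finite generation of the section ring $R(\overline{X}, D')$ and asymptotic Riemann--Roch on $\overline{X}$. The symmetry $Z(1/t)=Z(t)$ then comes from the log Calabi--Yau condition $K_{\overline{X}} + D \sim 0$: with $d = \dim X_{g,n,k} = 6g+2n-6$, Serre duality yields $H^0(\overline{X},rD)^\vee \cong H^d(\overline{X}, -(r+1)D)$, making the section ring Gorenstein with controlled $a$-invariant. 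Stanley's palindromic symmetry for Hilbert series of Gorenstein rings, combined with the evenness of $d$, translates precisely into $Z_{g,n}(1/t) = Z_{g,n}(t)$.

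The main obstacle is matching the combinatorial and geometric filtrations in the previous step. Theorem~\ref{mtheorem} furnishes the compactification and the log Calabi--Yau condition, but $D$ is typically reducible with components playing different roles relative to word length (corresponding, roughly, to different topological types of pinching multicurves on $S_{g,n}$). One must therefore construct the effective $\Q$-divisor $D' = \sum \lambda_i D_i$ compatible with $\ell_\sigma$, which likely requires detailed information about the boundary strata of $\overline{X}$ and their interpretation via maximal disjoint multicurve systems on $S_{g,n}$, perhaps via a Newton--Okounkov body construction adapted to the word-length valuation.
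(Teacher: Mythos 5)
Your combinatorial half is essentially the paper's: the identification $Z_{g,n}(t)=(1-t)\sum_r(\dim F_r)t^r$ via a multicurve basis adapted to word length is Lemma \ref{basislem} (built on Charles--March\'e, Theorem \ref{cm}, and the chord-diagram argument of Lemma \ref{f}, which is the real content behind your ``induction on word length''; you also need the small extra argument that non-peripheral multicurves remain a basis after passing to the relative ring, or else work with the full ring $R_{g,n}$ and divide out the peripheral factor as the paper does). Where you genuinely diverge is the source of the symmetry: you want to deduce it from Theorem \ref{mtheorem}, whereas in the paper the logic runs the other way --- both theorems flow from the functional equation of Theorem \ref{hilbert}, which comes from Le Bruyn's invariant-theoretic computation for $\Cb[\Mb^m]^{\SL_2}$ together with Hochster--Roberts (Cohen--Macaulayness) and Stanley; the geometric statement $K+D\sim0$ is then extracted via Demazure and Watanabe.

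The gap is that Theorem \ref{mtheorem}, as stated, is too weak to run your argument. First, Stanley's palindromy criterion applies to \emph{Gorenstein} graded rings, and ``$K_{\overline X}+D\sim0$ plus Serre duality'' does not give Gorensteinness of the section ring: you need the section ring to be Cohen--Macaulay (equivalently, vanishing of the intermediate cohomologies $H^i(\overline X,\Ocal(rE))$ for $0<i<\dim$), and in the paper this input comes from Hochster--Roberts applied to $(\Cb[\Mb^m]^{\deg})^{\SL_2}$ (Proposition \ref{propcm}), not from the log Calabi--Yau condition. Second, even granting CM, Stanley only gives $H(1/t)=\pm t^{-a}H(t)$ for \emph{some} $a$-invariant $a$; the clean symmetry $Z(1/t)=Z(t)$ requires $a=-1$ exactly (Theorem \ref{gorenst}), and pinning this down through Watanabe requires knowing that the Demazure divisor $E$ and the correction divisor $E'$ satisfy $E+E'=D$, i.e. the $(p_F,q_F)$ bookkeeping of Lemma \ref{deglem}, which rests on reducedness of the central fiber (Proposition \ref{6s3}) and low-degree generation. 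Third, your ``main obstacle'' --- matching $F_\bullet$ with $H^0(\overline X, rD')$ for a boundary $\Qb$-divisor --- is not something to be solved by a Newton--Okounkov construction on an abstract log CY compactification: it holds only because $\overline X$ is \emph{defined} as $\Proj$ of the Rees algebra of the word filtration, so that Demazure's theorem hands you the divisor $E$ with section ring equal to that Rees algebra. In short, what your route actually needs is Theorem \ref{gorenst} (Gorenstein with canonical module shifted by $-1$), not Theorem \ref{mtheorem}; once you have Theorem \ref{gorenst}, your deduction of the symmetry is correct and is equivalent to the paper's, since the paper obtains both that statement and the symmetry from the same functional equation.
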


This gives a topological interpretation of the log Calabi-Yau property of the moduli spaces $X_{g,n,k}$. Theorems \ref{mtheorem} and \ref{mtheorem2} are related by a remarkable result of Charles and March\'e \cite{cm}, which states that the isotopy classes of nondegenerate multicurves in $\Sigma_{g,n}$ provide a $\C$-linear basis for the coordinate ring of $X_{g,n}$ through their associated trace functions.

\subsection{Outline of proofs}
\label{sect:1.2}
We now describe the proofs of the above theorems and the contents of this paper. Our proof of Theorem \ref{mtheorem} proceeds in three steps: (i) we construct a graded algebra whose $\Proj$ is a compactification of $X_{g,n,k}$, (ii) we establish its algebraic properties, and (iii) we convert these properties to geometric information about the projective variety. This graded algebra is constructed (with the choice $\sigma$ of generators of $\pi_1(\Sigma_{g,n})$ above) using results from Section \ref{sect:2}, where we introduce the notion of \emph{word compactification} for the $\SL_2(\C)$-character variety of an arbitrary group with respect to a finite set of generators.

In Section \ref{sect:3}, we show that the algebra constructed using Section \ref{sect:2} is a graded Gorenstein domain with an explicit canonical module, which (after applying works of Hochtster-Roberts \cite{hr} and Stanley \cite{stanley}) amounts to a certain functional equation for the Hilbert series of the graded algebra. This functional equation is in turn deduced from an elementary invariant theoretic work of Le Bruyn \cite{lebruyn}. An analysis of singularities of the algebra (given separately in Section \ref{sect:4}) shows that the algebra is moreover normal. This allows us invoke results of Demazure \cite{demazure} and Watanabe \cite{watanabe} on graded normal rings to convert these algebraic properties to the geometry of our compactification, thereby proving Theorem \ref{mtheorem}.

In Section \ref{sect:5}, we use the theorem of Charles-March\'e \cite{cm} to establish an equivalence between the functional equation obtained in Section \ref{sect:3} and Theorem \ref{mtheorem2}, proving the latter. In Section \ref{sect:6}, we give another, purely combinatorial proof of Theorem \ref{mtheorem2} in the case $\Sigma_{g,n}$ has genus zero, where the main input comes from properties of the associahedron. Appendix A contains elementary computations in matrices relevant for the analysis in Section \ref{sect:4}.

\subsection{Example}
\label{sect:1.3}
We illustrate our results in the case $(g,n)=(1,1)$ of a one holed torus. (Further examples are given in Example \ref{examples}.) Let $\sigma=\{a_1,a_2\}$ be the free generators of $\pi_1(\Sigma_{1,1})$ as in the notation of (\ref{sp}), so that $a_1$ and $a_2$ correspond to simple loops on $\Sigma_{1,1}$ which intersect each other once transversally at the base point. The trace functions give us an identification
$$(\tr_{a_1},\tr_{a_2},\tr_{a_1a_2}):X_{1,1}\simeq\A_{x,y,z}^3$$
due to Fricke, under which the moduli space $X_{1,1,k}$ is presented as an affine surface with equation
$$x^2+y^2+z^2-xyz-2=k.$$
Our compactification of $X_{1,1,k}$ is precisely the projective closure $Z$ of $X_{1,1,k}$ in the weighted projective space $\P(1,1,1,2)$, where the variables $x$ and $y$ are given degree $1$ and $z$ is given degree $2$. Since $Z$ is a degree $4$ hypersurface in $\P(1,1,1,2)$, applying the adjunction formula we find that $K_Z+D\sim0$. Note that the Hilbert series $H_D(t)$ of the boundary divisor $D$ is
$$H_D(t)=\frac{(1-t)(1-t^4)}{(1-t)^3(1-t^2)}=\frac{1+t^2}{(1-t)^2}.$$
On the other hand, using train tracks (see examples in Figure \ref{fig1}), one can see that an essential multicurve on $\Sigma_{1,1}$ is uniquely determined by a pair $(a,b)$ of integers lying in $\{(0,b):b\in\Z_{\geq0}\}\cup\{(a,b):a\in\Z_{\geq1},b\in\Z\}$, and that $\length_\sigma(a,b)=a+|b|$ under this identification. In particular, we compute
$$Z_{1,1}(t)=\sum_{b=0}^\infty t^b+\sum_{a=1}^\infty\sum_{b\in\Z}t^{a+|b|}=\frac{1}{1-t}+\frac{t}{1-t}\frac{1+t}{1-t}=\frac{1+t^2}{(1-t)^2}$$
so that $Z_{1,1}(1/t)=Z_{1,1}(t)$. Note that $Z_{1,1}(t)=H_D(t)$.

\begin{figure}[ht]
    \centering
    \includegraphics{./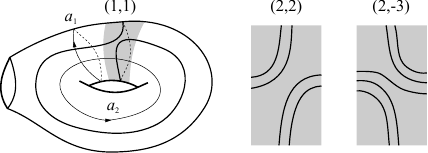}
    \caption{Some essential multicurves on $\Sigma_{1,1}$}
    \label{fig1}
\end{figure}

\subsection{Remarks}
\label{sect:1.4}
For $k$ integral, the varieties $X_{g,n,k}$ admit natural integral models. Our main motivation for Theorem \ref{mtheorem} comes from its direct ramifications for the expected Diophantine analysis of the integral points. For instance, in \cite{whang2}, we show that the set of integral points $X_{g,n,k}(\Z)$ is finitely generated, in a suitable sense, under the natural action of the mapping class group. This invites an analogy with finite generation results for integral points on log Calabi-Yau varieties of linear type, such as algebraic tori and abelian varieties. This analogy is explored further in \cite{whang4}, where we give an effective Diophantine analysis of integral points for nondegenerate algebraic curves on $X_{g,n,k}$, partly inspired by works of Vojta \cite{vojta2}, \cite{vojta3} and Faltings \cite{faltings} on subvarieties of semiabelian varieties.

The varieties $X_{g,n,k}$ are in general singular. The definition of a possibly singular log Calabi-Yau pair $(Z,D)$ varies somewhat across the literature (see e.g.~\cite{kollar}, \cite{ck}, \cite{kx}, \cite{ghk1}), in the type of boundary divisor $D$, type of triviality of the log canonical divisor $K_Z+D$, and singularity type. Our notion coincides with that of \cite{ck}.

Admitting the normality of $X_{g,n,k}$ proved in this paper, the triviality of the canonical divisor of the affine variety $X_{g,n,k}$ can also be deduced from the fact that the smooth locus $X_{g,n,k}^s\subset X_{g,n,k}$ admits an algebraic symplectic form, by work of Goldman (see for example \cite{goldman3}). The top power of the symplectic form provides a holomorphic volume form which trivializes the canonical bundle on $X_{g,n,k}^s$. Another construction of a volume form is given by the Reidemeister torsion, see \cite{porti}. Informally, Theorem \ref{mtheorem} can be viewed as saying that such a holomorphic volume form on $X_{g,n,k}$ has simple poles at infinity for the compactification at hand.

Given the explicit nature of our compactifications, the results of this paper may be relevant to conjectures surrounding the boundary divisors of relative character varieties for surfaces, as formulated by Simpson \cite{simpson} (verified therein for punctured spheres; see also \cite{komyo}), or more generally of log Calabi-Yau varieties, cf.~\cite{kx}. We leave this interesting topic for a future investigation.

For the ``decorated'' moduli spaces of local systems of Fock and Goncharov \cite{fg} which are related to, but different from, the moduli spaces considered in this paper, log Calabi-Yau compactifications of open subspaces can be constructed by general techniques of cluster varieties established by Gross-Hacking-Keel \cite{ghk1}; see also \cite{ghkk} for an elaboration of this point.

We mention another interesting compactification of the $\SL_2$ (or more general) character variety for free groups due to Manon \cite{manon}, using different Rees algebras associated to trivalent graphs, arising in the context of moduli spaces of principal bundles on closed Riemann surfaces and their degenerations. (Actually, in the setting of free groups with free generators, our word compactification coincides with a special case of a construction considered in \cite{manon}; see our remark at the end of Section \ref{sect:2.4}.) See also \cite{manon2} for another compactification.

\subsection{Conventions}
\label{sect:1.5}
Throughout this paper, a ring or an algebra will always mean a commutative algebra with unity over the complex numbers $\C$, unless otherwise specified. All schemes will be defined over $\C$. Given an affine scheme $X$, we denote by $\C[X]$ its coordinate ring. Let $\M$ denote the affine scheme parametrizing $2\times2$ matrices. Let $\{x_{ij}\}_{i,j\in\{1,2\}}$ be the regular functions on $\M$ corresponding to the $(i,j)$th entries of a matrix, giving an identification $(x_{ij}):\M\simeq\A^4$ with affine space. The standard matrix variable $x$ and its adjugate $x^*$ are the $2\times2$ matrices with coefficients in $\C[\M]$ given by
$$x=\begin{bmatrix}x_{11} & x_{12}\\ x_{21} & x_{22}\end{bmatrix},\quad \text{and}\quad x^*=\begin{bmatrix}x_{22}& -x_{12}\\ -x_{21} & x_{11}\end{bmatrix}.$$
In particular, $\det(x)$ and $\tr(x)$ are regular functions on $\M$. Let ${}^*:\M\to\M$ denote the involution on $\M$ determined by $x\mapsto x^*$.

Given $2\times2$ matrices $a$ and $b$, we shall denote $\langle a,b\rangle=aba^*b^*$ and $[a,b]=ab-ba$ in this paper, to avoid confusion. Given complex numbers $t,k\in\C$, let $\M_t\subset\M$ be the subscheme of matrices with determinant $t$, and let $\M_{t,k}\subset\M_t$ be the subscheme of matrices with determinant $t$ and trace $k$. We shall denote $\SL_2=\M_{1}$ and $\SL_{2,k}=\M_{1,k}$. Let $\mathbf1$ be the identity matrix. We shall say that a matrix $a$ is \emph{scalar} if $a=\lambda\mathbf1$ for some $\lambda\in\C$, and \emph{nonscalar} otherwise. As usual, let $\mathfrak{sl}_2(\C)$ denote the tangent space to $\SL_2$ at $\mathbf1$. It is the space of traceless matrices in $\M_2(\C)$.

Given an integer $m\geq1$, consider $\Cfrak_m=\Z/m\Z$ with its natural cyclic ordering. By a cyclic interval in $\Cfrak_m$ we shall mean a sequence of the form $i_0,i_0+1,\dots,i_0+k$ for some $i_0\in \Cfrak_m$ and for some $k\in\{0,\dots,m-1\}$. If $|I|=k+1<m$, then those elements of $\Cfrak_m$ not contained in $I$ together form a cyclic interval, denoted $I^c$.

\subsection{Acknowledgements}
\label{sect:1.6}
This work was done as part of the author's Ph.D.~thesis at Princeton University. I thank my advisor Peter Sarnak, Phillip Griffiths, and Sophie Morel for their guidance and encouragement during the writing of this paper. I also thank Carlos Simpson and Sean Keel for correspondences illuminating the literature, and Yuchen Liu for useful conversations. I thank Julien March\'e and an anonymous referee for providing helpful remarks and suggesting improvements to this paper.

\section{Word compactifications}
\label{sect:2}

In this section, we introduce the notion of word compactification for the character variety of a group with respect to a finite set of generators. In Sections \ref{sect:2.1} and \ref{sect:2.2}, we recall basic facts about graded algebras and Rees algebras, mainly to set our notation. We introduce the ($\SL_2$) character varieties of groups and their basic facts in Section \ref{sect:2.3}. In Section \ref{sect:2.4}, we define the notion of word compactification for a character variety, and give an alternate description in Section \ref{sect:2.5}.

\subsection{Graded rings} \label{sect:2.1}

Given an affine scheme $X$ with an action by the multiplicative group $\G_m$, its coordinate ring $A=\C[X]$ also carries a $\G_m$-action, or equivalently the structure of a graded ring via the $\G_m$-eigenspace decomposition:
$$A=\bigoplus_{r\in\Z}A_r,\quad \text{where}\quad A_r=\{a\in A:\lambda\cdot a=\lambda^{-r}a\text{ for all }\lambda\in\G_m(\C)\}.$$
Conversely, the spectrum $\Spec A$ of a graded ring $A$ carries a natural $\G_m$-action. Given a morphism of graded rings $B\to A$, we have an associated $\G_m$-equivariant morphism of schemes $\Spec A\to\Spec B$, and given $b\in\Spec B(\C)$ and $\lambda\in\G_m(\C)$ the fibers $(\Spec A)_b$ and $(\Spec A)_{\lambda\cdot b}$ are isomorphic via $\lambda$.

\begin{example}
For the grading of the polynomial ring $\C[t]$ by degree, the associated $\G_m$-action on the affine line $\A^1=\Spec\C[t]$ is the usual one $\lambda\cdot a=\lambda a$ on points.
\end{example}

Let $A$ be a graded algebra. An element $t\in A_1$ is non-nilpotent if and only if the morphism $\C[t]\to A$ is injective, or in other words the associated $\G_m$-equivariant morphism of schemes $\pi: X=\Spec A\to\A^1$ is dominant. Let $X_0=\Spec A/(t)$ and $X_1=\Spec A/(t-1)$ be the fibers of $\pi$ over $0$ and $1$, respectively, and let $X|_{\G_m}=\pi^{-1}(\G_m)$ where $\G_m=\Spec\C[t,1/t]\subset\A^1$ is the complement of $0$.

\begin{lemma}
\label{2l1}
Let $A$ be a graded ring, and let $t\in A_1$ be non-nilpotent as above. We have a natural $\G_m$-equivariant isomorphism
$$X_1\times\G_m\simeq X|_{\G_m},$$
equipping $X_1$ with the trivial $\G_m$-action. In particular, $A/(t-1)\simeq A[1/t]_0$.
\end{lemma}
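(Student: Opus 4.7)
The plan is to establish both statements simultaneously by working at the level of graded rings, where the key point is to compare the degree-zero part of the localization $A[1/t]$ with the quotient $A/(t-1)$.

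First I would prove the ``in particular'' statement $A/(t-1) \simeq A[1/t]_0$. Define a ring homomorphism $\varphi : A \to A[1/t]_0$ on homogeneous elements by $\varphi(a) = a/t^r$ for $a \in A_r$, extended additively. Since $\varphi(t) = t/t = 1$, the element $t-1$ lies in the kernel, so $\varphi$ factors through $A/(t-1)$. For the inverse, define $\psi : A[1/t]_0 \to A/(t-1)$ by sending a homogeneous fraction $a/t^s$ (with $a \in A_s$) to the class of $a$ modulo $(t-1)$; this is well-defined because multiplying numerator and denominator by $t^k$ replaces $a$ by $t^k a$, which has the same class modulo $(t-1)$. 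A direct check on homogeneous elements shows $\varphi$ and $\psi$ are mutually inverse.

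Next I would establish the graded-ring isomorphism $A[1/t] \simeq (A[1/t]_0)[t, 1/t]$ by showing that every homogeneous element of $A[1/t]$ of degree $d$ has the form $c \cdot t^d$ for a unique $c \in A[1/t]_0$. Indeed, if $a/t^s \in A[1/t]$ is homogeneous of degree $d$, then $a \in A_{s+d}$, so writing $a/t^s = (a/t^{s+d}) \cdot t^d$ exhibits it as an element of $A[1/t]_0$ times $t^d$; uniqueness follows because $t$ is a non-zero-divisor in $A[1/t]$. Combined with Step 1, this gives $A[1/t] \simeq (A/(t-1)) \otimes_\Cb \Cb[t,1/t]$ as graded rings, where the first tensor factor sits in degree zero and $t$ is the degree-one generator of the second. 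Passing to $\Spec$, this translates into the $\Gb_m$-equivariant isomorphism $X|_{\Gb_m} \simeq X_1 \times \Gb_m$ with trivial action on $X_1$, as desired.

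There is really no single hard step here; the main thing to be careful about is bookkeeping the grading conventions (in particular that $\varphi$ and $\psi$ are inverse on \emph{all} homogeneous components, including negative-degree ones in $A[1/t]_0$ arising from fractions $a/t^s$ with $s > \deg a$), and verifying that the splitting $A[1/t] = A[1/t]_0 \cdot \Cb[t, 1/t]$ genuinely respects the ring structure — both of which reduce to the fact that $t$ is homogeneous of degree $1$ and non-nilpotent, hence a non-zero-divisor after localization.
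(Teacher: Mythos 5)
Your argument is correct, but it runs in the opposite logical direction from the paper's. The paper proves the geometric statement first: it writes down the two morphisms $X_1\times\Gb_m\to X|_{\Gb_m}$ and $X|_{\Gb_m}\to X_1\times\Gb_m$ directly at the level of points, namely $(a,\lambda)\mapsto\lambda\cdot a$ and $a\mapsto((\pi(a))^{-1}\cdot a,\pi(a))$, checks they are mutually inverse and $\Gb_m$-equivariant, deduces the graded isomorphism $A/(t-1)\otimes\Cb[t,1/t]\simeq A[1/t]$, and only then obtains $A/(t-1)\simeq A[1/t]_0$ by taking degree-zero parts. You instead prove the algebraic statement first, via the standard dehomogenization maps $\varphi(a)=a/t^r$ and $\psi(a/t^s)=\bar a$, then split $A[1/t]\simeq (A[1/t]_0)\otimes_\Cb\Cb[t,1/t]$ using that every homogeneous element of degree $d$ is uniquely $c\,t^d$ with $c\in A[1/t]_0$ ($t$ being a unit in $A[1/t]$), and finally apply $\Spec$ to recover the equivariant isomorphism of schemes. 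Both routes yield the same isomorphism; the paper's is shorter but leans on interpreting point-level formulas as scheme morphisms, whereas yours is fully explicit on the ring side and makes the bookkeeping of the $\Zb$-grading (including negative degrees) transparent, at the cost of a slightly longer verification. Your well-definedness check for $\psi$ should be phrased so as to cover the general equality $a/t^s=a'/t^{s'}$ in the localization, i.e.\ $t^{k+s'}a=t^{k+s}a'$ in $A$ for some $k\geq0$, which still collapses to $\bar a=\bar a'$ modulo $(t-1)$; this is the same observation you make, just stated in full generality.
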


\begin{proof}
The morphisms $X_1\times\G_m\to X|_{\G_m}$ and $X|_{\G_m}\to X_1\times\G_m$ given at the level of points by $(a,\lambda)\mapsto\lambda\cdot a$ and $a\mapsto ((\pi(a))^{-1}\cdot a,\pi(a))$, respectively, are inverses to each other and are $\G_m$-equivariant. This proves the first part of the lemma, and gives us an isomorphism of graded rings $A/(t-1)\otimes\C[t,1/t]\simeq A[1/t]$. Taking the degree $0$ parts of both sides, the second assertion follows.
\end{proof}

\begin{lemma}
\label{2l2}
Let $A$ be a connected graded Cohen-Macaulay ring of pure dimension, and let $t\in A_1$ be non-nilpotent. Suppose that
\begin{enumerate}
	\item[\textup{(1)}]$X_1$ is a normal scheme, and
	\item[\textup{(2)}]$X_0$ is reduced of dimension $\dim A-1$.
\end{enumerate}
Then $A$ is a normal domain.
\end{lemma}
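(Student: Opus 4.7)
The plan is to apply Serre's criterion $R_1 + S_2$ to conclude that $A$ is a normal ring, and then to upgrade this to ``$A$ is a normal domain'' using the fact that $A$ is connected graded.

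First I would observe that $t$ is a non-zerodivisor in $A$. Since $A$ is Cohen-Macaulay of pure dimension, the associated primes of $A$ coincide with its minimal primes, and a homogeneous element is a zerodivisor precisely when it lies in one of them. If $t$ were a zerodivisor, some minimal prime $\pfrak$ of $A$ would contain $t$, forcing $\dim A/(t) = \dim A$ and contradicting hypothesis (2). Hence $A$ embeds into $A[1/t]$, which by Lemma \ref{2l1} is isomorphic to $(A/(t-1)) \otimes_\Cb \Cb[t, 1/t]$. Normality of $X_1$ implies that $A/(t-1)$ is reduced, so $A[1/t]$ and therefore $A$ are reduced.

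Next I would check Serre's two conditions. The condition $S_2$ is immediate from the Cohen-Macaulay hypothesis. For $R_1$, the open set $X|_{\Gb_m} \cong X_1 \times \Gb_m$ is normal as a product of a normal scheme with a smooth one, so it suffices to verify regularity at each height-one prime $\pfrak$ of $A$ containing $t$. By Krull's principal ideal theorem such a $\pfrak$ is a minimal prime over $(t)$, and reducedness of $A/(t)$ gives that $(A/(t))_{\pfrak/(t)}$ is a field, i.e., $\pfrak A_\pfrak = t A_\pfrak$. Thus $A_\pfrak$ is a one-dimensional local ring with principal maximal ideal, hence a DVR; this verifies $R_1$. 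Serre's criterion now gives that $A$ is a normal ring, so $A$ decomposes as a finite product $A^{(1)} \times \cdots \times A^{(n)}$ of normal integral domains. The associated idempotents form a finite set preserved by the $\Gb_m$-action on $A$; connectedness of $\Gb_m$ forces this action to be trivial, so the idempotents lie in $A_0 = \Cb$. Hence $n = 1$ and $A$ is a normal domain.

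The main subtlety in this argument is the passage from reducedness of the special fiber $X_0$ to regularity of $A$ at the height-one primes over $(t)$: what is really being used is that the Cohen-Macaulay hypothesis rules out embedded behavior so that a reduced hypersurface cut by $t$ forces $t$ itself to be a uniformizer at each generic point of $X_0$. Everything else is a routine packaging of Serre's criterion with the graded structure.
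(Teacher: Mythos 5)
Your proof is correct and takes essentially the same route as the paper: Serre's criterion with $S_2$ supplied by the Cohen--Macaulay hypothesis, $R_1$ checked separately at height-one primes not containing $t$ (via Lemma \ref{2l1} and normality of $X_1$) and at those containing $t$ (via reducedness of $X_0$, which forces $t$ to generate the maximal ideal of $A_\pfrak$), and connectedness to upgrade from normal ring to normal domain. The only cosmetic difference is that you obtain that $t$ is a non-zerodivisor from unmixedness of the Cohen--Macaulay ring $A$ together with the dimension drop in hypothesis (2), whereas the paper deduces it from flatness of $A$ over $\Cb[t]$ via miracle flatness.
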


\begin{proof}
Since $A$ is connected, if it is a normal ring then it is automatically a normal domain. Since $A$ is Cohen-Macaulay, in order to show that $A$ is normal it suffices by Serre's criterion for normality \cite[Theorem 23.8, p.183]{matsumura} to show that $A$ is regular in codimension $1$.

Since $t$ is not nilpotent, the $\G_m$-equivariant morphism $\pi: X\to\A^1$ is dominant, with isomorphic fibers of dimension $\dim A-1$ over $\G_m$. By condition (2), in fact $\pi$ has equidimensional fibers everywhere. By the miracle flatness theorem \cite[Theorem 23.1, p.179]{matsumura}, since $A$ is Cohen-Macaulay and $\C[t]$ is regular it follows that $A$ is flat over $\C[t]$. In particular, $t$ is not a zero divisor in $A$.

By Lemma \ref{2l1}, we have $X|_{\G_m}\simeq X_1\times\G_m$, which is regular in codimension $1$ by condition (1). Thus, given any prime ideal $\pfrak\subset A$ of height $\leq 1$ such that $t\notin\pfrak$, the localization $A_\pfrak$ is regular. Next, suppose that $\pfrak\subset A$ is a prime ideal of height $1$ with $t\in\pfrak$. Since $t\in A$ is not a zero divisor, $A/(t)$ is Cohen-Macaulay of dimension $\dim A/(t)=\dim A-1$. Hypothesis (2) implies that $(A/tA)_\pfrak$ is regular, and hence $A_\pfrak$ is regular. This proves the result.
\end{proof}

\subsection{Rees algebras} \label{sect:2.2}
By a \emph{filtered algebra} $(A,\Fil)$ we mean an algebra $A$ and an increasing filtration $\Fil$ of $A$ by complex vector spaces, indexed by $\Z_{\geq0}$, such that:
\begin{enumerate}
	\item[\textup{(1)}] $\C\subseteq \Fil_0A$, and
	\item[\textup{(2)}] $\Fil_{r}\cdot\Fil_s\subseteq\Fil_{r+s}$ for every $r,s\geq0$.
\end{enumerate}
A morphism of filtered algebras $(B,\Fil)\to(A,\Fil)$ is a morphism of algebras $B\to A$ mapping $\Fil_r$ into $\Fil_r$ for every $r\geq0$. Given a $\Z_{\geq0}$-graded algebra $A$, we have an associated filtered algebra $(A,\Fil)$ where $\Fil$ is given by $\Fil_rA=\bigoplus_{s=0}^rA_s$.

\begin{example}
For the grading of $\C[t]$ by degree, or more generally the grading of a polynomial ring $\C[x_1,\dots,x_N]$ in several variables by (total) degree, we shall denote by $\Fil^{\deg}$ the associated filtration.
\end{example}

Given a filtered algebra $(A,\Fil)$, the Rees algebra $A^{\Fil}$ is the graded algebra
$$A^{\Fil}=\bigoplus_{r=0}^\infty \Fil_r.$$
A morphism of filtered algebras $(B,\Fil)\to(A,\Fil)$ induces a morphism $B^{\Fil}\to A^{\Fil}$ of Rees algebras. Given a filtered algebra $(A,\Fil)$, we shall denote by $t$ the element of degree $1$ in $A^{\Fil}$ corresponding to $1\in\Fil_1 A=A_1^{\Fil}$. We have identifications
$$A^{\Fil}/(t-1)=A\quad\text{and}\quad A^{\Fil}/(t)=\Gr^{\Fil}A$$
where the latter preserves gradings. Note that $\Spec A^{\Fil}$ is connected if and only if $\Spec A_0^{\Fil}$ is connected. We have a natural identification $A=A^{\Fil}[1/t]_0$ by Lemma \ref{2l1}. Thus, if $A^{\Fil}$ is a finitely generated graded algebra, then $\Proj A^{\Fil}$ is a projective scheme containing $\Spec A$ as an open affine complement to the closed subscheme $\Proj\Gr^{\Fil}A$. For simplicity of notation, given a filtered algebra $(A,\Fil^x)$ where $x$ is a symbol, we shall often write $A^x=A^{\Fil^x}$ for the Rees algebra.

\begin{example}
\label{rees}
We have the following.
\begin{enumerate}
	\item[(1)] Equipping $\C$ with the trivial filtration $\Fil$ given by $\Fil_r=\C$ for all $r\geq0$, we have $\C^{\Fil}=\C[t]$ with grading by degree. For any filtered algebra $(A,\Fil)$, the obvious morphism $(\C,\Fil)\to(A,\Fil)$ induces a $\G_m$-equivariant morphism of schemes $\pi:\Spec A^{\Fil}\to\A^1$.
	\item[(2)] Given a filtered algebra $(B,\Fil)$ and a surjective algebra homomorphism $\varphi:B\to A$, we have an induced filtration $\Fil$ on $A$ defined by $\Fil_r=\varphi(\Fil_r)$ for every $r\geq0$ making $\varphi$ a morphism of filtered algebras. We thus have an induced morphism $\varphi:B^{\Fil}\to A^{\Fil}$.
	\item[(3)] For $(A=\C[x_1,\dots,x_N],\Fil^{\deg})$ the polynomial ring with the filtration by degree, let $A^{\deg}$ be the Rees algebra. We then have an isomorphism $A^{\deg}\simeq\C[X_1,\dots,X_N,T]$ given by
	$$f(x_1,\dots,x_N)\mapsto T^r f(X_1/T,\dots,X_N/T)$$
for each $f\in\Fil_r^{\deg}A=A_r^{\deg}$. The projective space $\Proj A^{\deg}\simeq\P^N$ is the usual compactification of the affine space $\Spec A=\A^N$.
\end{enumerate}
\end{example}

\begin{lemma}
\label{lema}
Given a filtered algebra $(A,\Fil)$, we have the following.
\begin{enumerate}
	\item[\textup{(1)}] $A^{\Fil}$ is an integral domain if and only if $A$ is.
	\item[\textup{(2)}] If $A^{\Fil}$ is a normal domain, then so is $A$.
\end{enumerate}
\end{lemma}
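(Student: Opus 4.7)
For part (1), the plan is to prove each direction using a different concrete presentation of the Rees algebra.

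For the forward direction, assume $A$ is a domain. I would define a map $\iota : A^{\Fil}\to A[t]$ that sends a homogeneous element $a\in\Fil_r = A^{\Fil}_r$ to $a t^r\in A[t]$. The filtration property $\Fil_r\cdot\Fil_s\subseteq\Fil_{r+s}$ guarantees $\iota$ is a ring homomorphism, and injectivity is immediate because distinct graded pieces of $A^{\Fil}$ land in distinct powers of $t$ inside $A[t]$. Since $A[t]$ is a domain whenever $A$ is, the subring $A^{\Fil}$ is a domain.

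For the converse, assume $A^{\Fil}$ is a domain. Then $t\in A^{\Fil}_1$ is a nonzero (hence nonzerodivisor) element, so the localization $A^{\Fil}[1/t]$ is also a domain. By the second assertion of Lemma \ref{2l1}, $A\simeq A^{\Fil}[1/t]_0$ sits as a subring of $A^{\Fil}[1/t]$, so $A$ is a domain.

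For part (2), assume $A^{\Fil}$ is a normal domain. Part (1) already tells us $A$ is a domain; only normality remains. Localization preserves normality, so $A^{\Fil}[1/t]$ is a normal domain. Using the $\Gb_m$-equivariant isomorphism in Lemma \ref{2l1} at the level of rings, we identify
$$A^{\Fil}[1/t]\;\simeq\; A\otimes_{\Cb}\Cb[t,1/t]\;=\;A[t,1/t],$$
the Laurent polynomial ring over $A$ (with $A$ sitting as the degree $0$ part). It therefore suffices to show that if $A[t,1/t]$ is a normal domain, then $A$ is normal. Let $K=\Frac A$, and suppose $x\in K$ is integral over $A$. Then $x$ is also integral over $A[t,1/t]$, so by the assumed normality $x\in A[t,1/t]$. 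Inside $\Frac A[t,1/t]=K(t)$, an element is simultaneously in $K$ (constant in $t$) and a finite Laurent polynomial in $t$ with coefficients in $A$ if and only if it lies in $A$; thus $x\in A$, proving $A$ is integrally closed in $K$.

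There is no serious obstacle here: the entire argument is driven by the identification $A\simeq A^{\Fil}[1/t]_0$ from Lemma \ref{2l1}, together with the straightforward injection $A^{\Fil}\hookrightarrow A[t]$. The only point that deserves care is confirming that the scheme-level $\Gb_m$-equivariant isomorphism of Lemma \ref{2l1}, when translated to rings, really does give $A^{\Fil}[1/t]\simeq A[t,1/t]$ as $A$-algebras, so that the intersection argument $K\cap A[t,1/t]=A$ is legitimately available.
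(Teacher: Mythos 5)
Your proposal is correct and takes essentially the same route as the paper: part (1) comes down to the fact that the graded pieces of $A^{\Fil}$ are subspaces of $A$ compatibly with multiplication (the paper argues with top-degree homogeneous terms where you package this as the embedding $A^{\Fil}\hookrightarrow A[t]$), and part (2) localizes at $t$ and descends normality to the degree-$0$ part $A\simeq A^{\Fil}[1/t]_0$, which the paper does by the general statement that the degree-$0$ piece of a normal graded domain is normal and you do concretely via $A^{\Fil}[1/t]\simeq A[t,1/t]$ and the observation that $\Frac(A)\cap A[t,1/t]=A$. No gaps; the only point you flag (that Lemma \ref{2l1} really yields $A^{\Fil}[1/t]\simeq A[t,1/t]$ compatibly with $A$ in degree $0$) is exactly what the proof of Lemma \ref{2l1} supplies.
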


\begin{proof}
(1) If $A^{\Fil}$ is an integral domain, then so is $A=A^{\Fil}[1/t]_0$. Conversely, suppose that $A$ is an integral domain. If $ab=0$ in $A^{\Fil}$, then replacing $a$ and $b$ by nonzero terms (if any) of highest degree we may assume that $a$ and $b$ are homogeneous of degree $r$ and $s$, respectively. We then have $ab=0\in A_{r+s}^{\Fil}=\Fil_{r+s}$ within $A$, and hence $a=0$ or $b=0$. This proves that $A^{\Fil}$ is an integral domain.

(2) Given a normal $\Z$-graded domain $B$, its degree $0$ component $B_0$ is also a normal domain. Indeed, it is obvious that $B_0$ is a domain; to see that $B_0$ is also normal, note that if $x\in B$ is integral over $B_0$ then we must have $x\in B_0$ by degree reasons. Applying this to $A=A^{\Fil}[1/t]_0$, we see that if $A^{\Fil}$ is a normal domain then so is $A$.
\end{proof}

\subsection{Character varieties of groups} \label{sect:2.3}

Let $\pi$ be a finitely generated group. The ($\SL_2$) \emph{representation variety} $\Rep_\pi$ is the affine scheme determined by the functor
$$A\mapsto\Hom(\pi,\SL_2(A))$$
for every commutative ring $A$. Given a sequence of generators of $\pi$ with $m$ elements, we have a presentation of $\Rep_\pi$ as a closed subscheme of $\SL_2^m$ defined by equations coming from relations among the generators. For each $a\in\pi$, let $\tr_a$ be the regular function on $\Rep_\pi$ given by $\rho\mapsto\tr\rho(a)$.

The ($\SL_2$) \emph{character variety} of $\pi$ over $\C$ is the affine invariant theoretic quotient
$$X_\pi=\Rep_\pi\git\SL_2=\Spec\C[\Rep_\pi]^{\SL_2(\C)}$$
under the simultaneous conjugation action of $\SL_2$. Note that the regular function $\tr_{a}$ for each $a\in\pi$ descends to a regular function on $X(\pi)$. Moreover, $X(\pi)$ has a natural model over $\Z$, defined as the spectrum of
$$R_\pi=\Z[\tr_a:a\in\pi]/(\tr_{1}-2,\tr_a\tr_b-\tr_{ab}-\tr_{ab^{-1}}).$$
The relations in the above presentation arise from the fact that the trace of the $2\times 2$ identity matrix is $2$, and $\tr(A)\tr(B)=\tr(AB)+\tr(AB^{-1})$ for every $A,B\in\SL_2(\C)$. We refer to \cite{horowitz}, \cite{ps}, \cite{saito} for details.

\begin{example}
\label{exfree}
We refer to Goldman \cite{goldman2} for details of examples below. Let $F_m$ denote the free group on $m\geq1$ generators $a_1,\dots,a_m$.
\begin{enumerate}
	\item[(1)] We have $\tr_{a_1}:X(F_1)\simeq\A^1$.
	\item[(2)] We have $(\tr_{a_1},\tr_{a_2},\tr_{a_1a_2}):X(F_2)\simeq\A^3$ by Fricke \cite[Section 2.2]{goldman2}.
	\item[(3)] The coordinate ring $\Q[X(F_3)]$ is the quotient of the polynomial ring
	$$\Q[\tr_{a_1},\tr_{a_2},\tr_{a_3},\tr_{a_1a_2},\tr_{a_2a_3},\tr_{a_1a_3},\tr_{a_1a_2a_3},\tr_{a_1a_3a_2}]$$
	by the ideal generated by two elements
$$
\tr_{a_1a_2a_3}+\tr_{a_1a_3a_2}-(\tr_{a_1a_2}\tr_{a_3}+\tr_{a_1a_3}\tr_{a_2}+\tr_{a_2a_3}\tr_{a_1}-\tr_{a_1}\tr_{a_2}\tr_{a_3})
$$
and
\begin{align*}
\tr_{a_1a_2a_3}\tr_{a_1a_3a_2}&-\{(\tr_{a_1}^2+\tr_{a_2}^2+\tr_{a_3}^2)+(\tr_{a_1a_2}^2+\tr_{a_2a_3}^2+\tr_{a_1a_3}^2)&\\
&\quad -(\tr_{a_1}\tr_{a_2}\tr_{a_1a_2}+\tr_{a_2}\tr_{a_3}\tr_{a_2a_3}+\tr_{a_1}\tr_{a_3}\tr_{a_1a_3})\\
&\quad +\tr_{a_1a_2}\tr_{a_2a_3}\tr_{a_1a_3}-4\}.
\end{align*}
\end{enumerate}
\end{example}

We record the following, which is attributed by Goldman \cite{goldman2} to Vogt \cite{vogt}.

\begin{lemma}
\label{rellem}
Given a finitely generated group $\pi$ and $a_1,a_2,a_3,a_4\in \pi$, we have
\begin{align*}
2{\tr_{a_1a_2a_3a_4}}&={\tr_{a_1}}{\tr_{a_2}}{\tr_{a_3}}{\tr_{a_4}}+{\tr_{a_1}}{\tr_{a_2a_3a_4}}+{\tr_{a_2}}{\tr_{a_3a_4a_1}}+{\tr_{a_3}}{\tr_{a_4a_1a_2}}\\
&\quad +{\tr_{a_4}}{\tr_{a_1a_2a_3}}+{\tr_{a_1a_2}}{\tr_{a_3a_4}}+{\tr_{a_4a_1}}{\tr_{a_2a_3}}-{\tr_{a_1a_3}}{\tr_{a_2a_4}}\\
&\quad -{\tr_{a_1}}{\tr_{a_2}}{\tr_{a_3a_4}}-{\tr_{a_3}}{\tr_{a_4}}{\tr_{a_1a_2}}-{\tr_{a_4}}{\tr_{a_1}}{\tr_{a_2a_3}}-{\tr_{a_2}}{\tr_{a_3}}{\tr_{a_4a_1}}.
\end{align*}
\end{lemma}

The above computation implies the following fact.

\begin{fact}
\label{fact}
If $\pi$ is a group generated by $a_1,\dots,a_m$, then $\Q[X(\pi)]$ is generated as a $\Q$-algebra by the collection $\{\tr_{a_{i_1}\dotsm a_{i_k}}:1\leq i_1<\dots<i_k\leq m\}_{1\leq k\leq 3}$.
\end{fact}

\subsection{Word compactifications} \label{sect:2.4}
Let $\pi$ be a finitely generated group. Recall that a \emph{length function} on $\pi$ is a function $\ell:\pi\to\R_{\geq0}$ satisfying the conditions:
\begin{enumerate}
	\item[\textup{(1)}] $\ell(1)=0$,
	\item[\textup{(2)}] $\ell(g^{-1})=\ell(g)$ for every $g\in \pi$, and
	\item[\textup{(3)}] $\ell(gh)\leq \ell(g)+\ell(h)$ for every $g,h\in \pi$.
\end{enumerate}
A length function equips $\pi$ with a pseudo-metric via the formula $d_\ell(g,h)=\ell(gh^{-1})$. Given a generating set $\sigma\subseteq \pi$, we have the \emph{$\sigma$-word length function} $\length_\sigma:\pi\to\Z_{\geq0}$ which is given by $\length_\sigma(a)=\min\{r:\text{$a=w_1\dotsm w_r$ for some $w_i\in \sigma\cup \sigma^{-1}$}\}$. Let $\Fil^\sigma$ be the increasing filtration of the coordinate ring $R_\pi$ of the character variety of $\pi$ given by
$$\Fil_r^\sigma R_\pi=\Span_\C\{{\tr_{a}}:a\in\pi,\length_\sigma(a)\leq r\}.$$
We have $\C=\C\cdot{\tr_{1}}=\Fil_0R_\pi$, and the relations ${\tr_{a}}{\tr_{a}}={\tr_{ab}}+{\tr_{ab^{-1}}}$ on $R_\pi$ show that $\Fil_r^\sigma\cdot\Fil_s^\sigma\subseteq\Fil_{r+s}^\sigma$ for every $r,s\in\Z_{\geq0}$. Thus, $(R_\pi,\Fil^\sigma)$ is a filtered algebra. Let $R_\pi^\sigma$ denote the associated Rees algebra.

\begin{definition}
The \emph{$\sigma$-word compactification} of $X_\pi$ is the scheme $X_\pi^\sigma=\Proj R_\pi^\sigma$.
\end{definition}

\begin{lemma}
\label{dimlem}
Let $\pi$ be a group generated by a finite set $\sigma$.
\begin{enumerate}
	\item[\textup{(1)}] We have $(R_\pi^\sigma)_0=\C$. For each $r\geq1$, we have $\dim_\C (R_\pi^\sigma)_r<\infty$.
	\item[\textup{(2)}] $R_\pi^\sigma$ is finitely generated by homogeneous elements of degree $\leq3$.
\end{enumerate}
In particular, $X_\pi^\sigma$ is projective and can be presented as a closed subscheme of some weighted projective space $\P(w_1,\dots,w_s)$ with $w_i\in\{1,2,3\}$.
\end{lemma}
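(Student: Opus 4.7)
The plan is to handle part (1) by direct inspection, then part (2) by induction on word length using the Vogt identity (Lemma~\ref{rellem}(2)) to reduce each trace $[g]$ with $\ell_\sigma(g) \geq 4$ to products of traces of strictly smaller length. For part (1), $(R_G^\sigma)_0 = \Fil_0^\sigma R_G$ is spanned by $[g]$ with $\ell_\sigma(g) = 0$, hence by $[\mathbf{1}] = 2 \in \Cb$, so $(R_G^\sigma)_0 = \Cb$. For $r \geq 1$, the ball $\{g \in G : \ell_\sigma(g) \leq r\}$ has cardinality bounded by $\sum_{k=0}^{r} (2|\sigma|)^k$, so $\Fil_r^\sigma R_G$ is finite-dimensional.

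For part (2), write $[g]_r$ for the image of $[g]$ in $(R_G^\sigma)_r$ when $\ell_\sigma(g) \leq r$, and let $S_{\leq 3}$ be the finite set consisting of $t \in (R_G^\sigma)_1$ together with $\{[g]_{\ell_\sigma(g)} : 1 \leq \ell_\sigma(g) \leq 3\}$. I claim $S_{\leq 3}$ generates $R_G^\sigma$ as a $\Cb$-algebra. Every graded piece $(R_G^\sigma)_r$ is spanned by such $[g]_r$, and $[g]_r = t^{r - \ell_\sigma(g)}\,[g]_{\ell_\sigma(g)}$, so it suffices to show by induction on $r = \ell_\sigma(g)$ that $[g]_r \in \Cb[S_{\leq 3}]$. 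The base case $r \leq 3$ is immediate. For $r \geq 4$, choose a factorization $g = s_1 s_2 s_3 w$ with $s_i \in \sigma \cup \sigma^{-1}$ and $\ell_\sigma(w) \leq r - 3$, and apply Lemma~\ref{rellem}(2) to $(s_1, s_2, s_3, w)$. This expresses $[g]$ as a $\Cb$-linear combination of monomials $[b_1]\cdots[b_k]$ in $R_G$ whose factors all satisfy $\ell_\sigma(b_j) \leq r - 1$; the bound is checked case-by-case, the longest contributors being $[s_2 s_3 w]$, $[s_3 w s_1]$, $[w s_1 s_2]$. Lifting each such monomial to the Rees algebra as $t^{r - \sum_j \ell_\sigma(b_j)} \prod_j [b_j]_{\ell_\sigma(b_j)} \in (R_G^\sigma)_r$, the inductive hypothesis places every term in $\Cb[S_{\leq 3}]$.

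The ``in particular'' clause follows from the standard weighted Proj construction: a finite set of homogeneous generators of degrees $w_1, \ldots, w_s \in \{1,2,3\}$ yields a surjection of graded algebras $\Cb[T_1, \ldots, T_s] \twoheadrightarrow R_G^\sigma$ (with $\deg T_i = w_i$), inducing a closed immersion $X_G^\sigma = \Proj R_G^\sigma \hookrightarrow \Pb(w_1, \ldots, w_s)$ and showing $X_G^\sigma$ is projective. The main subtlety is the uniform length bookkeeping in the induction: one must verify that every factor on the right-hand side of Vogt's identity has $\ell_\sigma$ strictly less than $r$, which is what makes the recursion actually decrease and terminate.
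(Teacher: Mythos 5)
Your argument is correct and follows essentially the same route as the paper: part (1) by noting $\Fil_0^\sigma R_G=\Cb\cdot[\mathbf1]$ and finiteness of word-length balls, and part (2) by induction on $\ell_\sigma$ using the four-variable identity of Lemma~\ref{rellem}(2) to rewrite any $[g]$ with $\ell_\sigma(g)=r\geq4$ as a combination of products $[w_1]\cdots[w_s]$ with $\sum_i\ell_\sigma(w_i)\leq r$ and each factor shorter, followed by the standard weighted $\Proj$ closed immersion. Your explicit length bookkeeping (the factors $[s_2s_3w]$, $[s_3ws_1]$, $[ws_1s_2]$ of length at most $r-1$) just spells out what the paper leaves implicit.
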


\begin{proof} (1) We have $\length_\sigma(g)=1$ if and only if $g=1$, and there are at most finitely many $\sigma$-words of length $\leq r$ in $\pi$ for any given $r$.

(2) Applying Lemma \ref{rellem} and induction, we see that the class $\tr_{w}\in R_\pi$ of every word $w$ of length $r\geq4$ in $\pi$ is a linear combination of terms of the form $\tr_{w_1}\dotsm\tr_{w_s}$ with $\sum_{i=1}^s\length_\sigma(w_i)\leq r$ and $\length_\sigma(w_i)\leq3$ for each $i=1,\dots,s$. This shows that $R_\pi^\sigma$ is generated by $(R_\pi^\sigma)_1\oplus(R_\pi^\sigma)_2\oplus (R_\pi^\sigma)_3$. Applying part (1) of this lemma, we obtain the desired result.
\end{proof}

\begin{remark}
When $\pi$ is a free group with free generating set $\sigma$ (which is the case mainly considered in this paper), the filtration $\Fil^\sigma$ on $R_\pi$ defined above gives a special case of the filtrations, constructed by Manon \cite{manon}, on the free group character variety coming from valuations. More precisely, for the unique graph with $|\sigma|$ edges and one vertex together with an assignment of length 1 to each edge, the general construction in \cite{manon} applies to give a valuation on $R_{\pi}$ whose associated filtration agrees with $\Fil^\sigma$ (see in particular Lemma 8.3 \emph{loc.cit.}).
\end{remark}

\subsection{An alternate description} \label{sect:2.5}
Following the notation from the beginning of this paper, let $\M$ be the scheme parametrizing $2\times 2$ matrices. Using the isomorphism $(x_{ij}):\M\simeq\A^4$ via matrix entries, we equip $\C[\M^m]$ for each $m\geq1$ with grading by degree and associated filtration $\Fil^{\deg}$. We have $\SL_2$ acting on $\M^m$ by simultaneous conjugation, preserving the grading (and hence the filtration) on $\C[\M^m]$. Therefore, $\SL_2$ acts on the Rees algebra $\C[\M^m]^{\deg}$ of $(\C[\M^m],\Fil^{\deg})$ preserving the grading. By Example \ref{rees}.(3), we have an identification
$$\C[\M^m]^{\deg}=\C[(X_1)_{ij},\dots,(X_m)_{ij},T]$$
with the polynomial ring on $4m+1$ generators, so that $\Spec\C[\M^m]^{\deg}=\M^m\times\A^1$. The $\SL_2$-action is the product of the conjugation action on $\M^m$ and the trivial action on $\A^1$. The morphism $\pi:\M^m\times\A^1\to\A^1$ from Example \ref{rees}.(1) is just the projection onto the second factor.

Let $\pi$ be a group with a finite generating sequence $\sigma$ of length $m$. The choice of $\sigma$ gives us a $\SL_2$-equivariant closed immersion $\sigma^*:\Rep_\pi\hookrightarrow\Rep_m=\SL_2^m\subset\M^m$, where $\Rep_m=\Rep_{F_m}$ is the representation variety of the free group $F_m$ on $m$ generators. Via the surjective morphism $\C[\M^m]\to\C[\Rep_\pi]$, the filtration $\Fil^{\deg}$ on $\C[\M^m]$ induces a filtration on $\C[\Rep_\pi]$ which we shall denote $\Fil^{\sigma}$.

\begin{proposition}
\label{propmorph}
We have $\Fil_r^\sigma R_\pi=(\Fil_r^\sigma\C[\Rep_\pi])^{\SL_2}$ for every $r\geq0$. In particular, there is a surjective morphism of graded rings
$$\Phi_\sigma:(\C[\M^m]^{\deg})^{\SL_2}\to R_\pi^\sigma.$$
\end{proposition}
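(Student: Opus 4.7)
The plan is to verify the equality $\Fil_r^\sigma R_G = (\Fil_r^\sigma\Cb[\Rep_G])^{\SL_2}$ for each $r\geq 0$, and then derive the surjection $\Phi_\sigma$ by assembling these identities and applying linear reductivity of $\SL_2$. Throughout, $\sigma^*:\Cb[\Mb^m]\twoheadrightarrow\Cb[\Rep_G]$ is $\SL_2$-equivariant for the simultaneous conjugation action, and the degree filtration on $\Cb[\Mb^m]$ is $\SL_2$-stable because conjugation preserves polynomial degree; by construction $\sigma^*$ carries $\Fil_r^{\deg}\Cb[\Mb^m]$ onto $\Fil_r^\sigma\Cb[\Rep_G]$.

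The forward inclusion $\Fil_r^\sigma R_G\subseteq(\Fil_r^\sigma\Cb[\Rep_G])^{\SL_2}$ is a direct computation. Given $a\in G$ with $\ell_\sigma(a)=s\leq r$, write $a=w_1\cdots w_s$ with each $w_i\in\sigma\cup\sigma^{-1}$, and use the adjugate identity $X_k^{-1}=X_k^*$ valid on $\SL_2$ to conclude that each $\rho(w_i)$ is a matrix whose entries are linear in the $x_{ij}^{(k)}$. Thus $\rho(a)$ is a product of $s$ matrices of entry-degree $1$, so $[a]=\tr(\rho(a))$ has total entry-degree $\leq r$. Combined with $\SL_2$-invariance of $[a]$ this gives the inclusion.

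For the reverse inclusion, the plan is to combine linear reductivity of $\SL_2$ with Procesi's First Fundamental Theorem for invariants of $m$-tuples of $2\times 2$ matrices. Given $f\in(\Fil_r^\sigma\Cb[\Rep_G])^{\SL_2}$, first lift $f$ to any $\tilde f\in\Fil_r^{\deg}\Cb[\Mb^m]$ and apply the Reynolds operator; since Reynolds preserves the grading and commutes with the equivariant map $\sigma^*$, this produces an invariant lift still in $\Fil_r^{\deg}\Cb[\Mb^m]^{\SL_2}$. By Procesi, $\Cb[\Mb^m]^{\SL_2}$ is $\Cb$-linearly spanned by products of traces $\tr(X_{i_1}\cdots X_{i_k})$, each homogeneous of degree equal to its word length $k$; hence the lift decomposes as a $\Cb$-combination of products $\prod_j\tr(X_{I_j})$ for which $\sum_j|I_j|\leq r$, where $I_j=(i_{j,1},\dots,i_{j,k_j})$ denotes an index sequence. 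Under $\sigma^*$ each trace $\tr(X_{I_j})$ becomes $[w_{I_j}]$ for the corresponding word $w_{I_j}=a_{i_{j,1}}\cdots a_{i_{j,k_j}}\in G$, which satisfies $\ell_\sigma(w_{I_j})\leq k_j$. Iterating the relation $[a][b]=[ab]+[ab^{-1}]$ from the presentation of $R_G$ places each such product in $\Fil_{\sum_j\ell_\sigma(w_{I_j})}^\sigma R_G\subseteq\Fil_r^\sigma R_G$, giving $f\in\Fil_r^\sigma R_G$ as required.

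The \emph{in particular} clause then follows formally: summing over $r$ identifies $R_G^\sigma$ with $(\Cb[\Rep_G]^{\Fil^\sigma})^{\SL_2}$, and Example \ref{rees}.(2) provides an $\SL_2$-equivariant surjection of Rees algebras $\Cb[\Mb^m]^{\deg}\twoheadrightarrow\Cb[\Rep_G]^{\Fil^\sigma}$, whose $\SL_2$-invariants yield $\Phi_\sigma$ by linear reductivity. The principal obstacle is the reverse inclusion, where one must genuinely invoke Procesi's invariant theory and the trace identities defining $R_G$ in order to translate degree-$\leq r$ invariant polynomials into $\Cb$-combinations of $[w]$'s with $\ell_\sigma(w)\leq r$; the forward inclusion merely uses linearity of the adjugate.
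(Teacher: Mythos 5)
Your proof is correct and follows essentially the same route as the paper: linear reductivity gives surjectivity of $(\Fil_r^{\deg}\Cb[\Mb^m])^{\SL_2}\to(\Fil_r^\sigma\Cb[\Rep_G])^{\SL_2}$, Procesi's trace-product spanning set handles the containment $(\Fil_r^\sigma\Cb[\Rep_G])^{\SL_2}\subseteq\Fil_r^\sigma R_G$, and the filtered, $\SL_2$-equivariant surjection of Rees algebras yields $\Phi_\sigma$. The only (harmless) variation is in the easy inclusion $\Fil_r^\sigma R_G\subseteq(\Fil_r^\sigma\Cb[\Rep_G])^{\SL_2}$, where you use linearity of the adjugate $X^{-1}=X^*$ on $\SL_2$, whereas the paper rewrites $[a]$ via the relation $[a][b]=[ab]+[ab^{-1}]$ into inverse-free trace products.
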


\begin{proof}
We begin by considering the filtration $\Fil^{\deg}$ on $\C[\M^m]$. For each $r\geq0$, a close inspection of the work of Procesi \cite{procesi} (Theorems 1.1, 1.2, and 1.3 \emph{loc.cit.}) shows that the space of invariants $(\C[\M^m]_r)^{\SL_2}$ is spanned by elements of the form
$$\tr(w_1)\tr(w_2)\dotsm \tr(w_s)$$
where each $w_i$ is a noncommutative monomial in the standard matrix variables $x_1,\dots,x_m$ on $\M^m$ such that we have $\sum_{i=1}^s \deg(w_i)=r$. Here, the degree $\deg(w_i)$ refers to the total multiplicity of matrices $x_1,\dots,x_m$ appearing in the monomial $w_i$. Since $\SL_2$ is linearly reductive, we see that the map
$$(\Fil_r^{\deg}\C[\M^m])^{\SL_2}\to(\Fil_r^{\sigma}\C[\Rep_\pi])^{\SL_2},$$
obtained from the surjective morphism
$$\Fil_r^{\deg}\C[\M^m]\to \Fil_r^{\sigma}\C[\Rep_\pi]$$
of finite-dimensional complex representations of $\SL_2$, remains surjective. In particular, the vector space $(\Fil_r^\sigma\C[\Rep_\pi])^{\SL_2}$ is spanned by elements of the form
$$\tr_{w_1}\tr_{w_2}\dotsm\tr_{w_s}$$
where each $w_i$ is a product of elements of $\sigma$ (with multiplicities, but without inverses) such that the sum total number of elements appearing is $\leq r$. This shows that $(\Fil_r^\sigma\C[\Rep_\pi])^{\SL_2}\subseteq \Fil_r^{\sigma}R_\pi$ since each element of the above form lies in $\Fil_r^{\sigma}R_\pi$, recalling that the filtration $\Fil^\sigma$ is compatible with multiplication on $R_\pi$. The other containment $\Fil_r^\sigma R_\pi\subseteq(\Fil_r^{\sigma}\C[\Rep_\pi])^{\SL_2}$ follows from the observation that, using the relation ${\tr_{a}}{\tr_{a}}={\tr_{ab}}+{\tr_{ab^{-1}}}$ in $R_\pi$, one can write any ${\tr_{a}}$ with $\length_\sigma(a)\leq r$ as a linear combination of elements of the above form. Finally, since the surjection $\C[\M^m]\to\C[\Rep_\pi]$ is compatible with the filtrations and the $\SL_2$-actions, we obtain a surjection
\begin{align*}
(\C[\M^m]^{\deg})^{\SL_2}&\to(\C[\Rep_\pi]^\sigma)^{\SL_2}=(\C[\Rep_\pi]^{\SL_2})^\sigma=R_\pi^\sigma
\end{align*}
which proves the second part of the proposition.
\end{proof}

\section{Log Calabi-Yau property}
\label{sect:3}
In this section, we give a proof of the log Calabi-Yau property of the moduli spaces $X_{g,n,k}$ introduced in Section \ref{sect:1}. In Section \ref{sect:3.1}, we analyze the homogeneous coordinate ring of the word compactifications of character varieties for free groups, relying on the invariant theoretic results of Hochster-Roberts \cite{hr} and Le Bruyn \cite{lebruyn} as well as a Hilbert series characterization of graded Gorenstein domains among graded Cohen-Macaulay domains due to Stanley \cite{stanley}. In Section \ref{sect:3.2}, we combine the results from Section \ref{sect:3.1} with the analysis of singularities in Section \ref{sect:4} to deduce Theorem \ref{mtheorem}, using the works of Demazure \cite{demazure} and Watanabe \cite{watanabe}.

\subsection{Free groups} \label{sect:3.1}
Let $F_m$ be the free group on $m\geq1$ generators $\sigma$. As before, let $\Rep_m=\SL_2^m$ be the representation variety of $F_m$, and let $X_m=\Rep_m\git\SL_2$ be the character variety of $F_m$ with $R_m=\C[\Rep_m]^{\SL_2}$. Let $\Fil^\sigma$ be the filtration on the coordinate ring $\C[\Rep_m]$ of $\Rep_m=\SL_2^m$ defined in Section \ref{sect:2.5}. In other words, it is the filtration obtained by the projection of $\Fil^{\deg}$ under the surjective ring homomorphism $\C[\M^m]\to\C[\Rep_m]$. For $i\in\{1,\dots,m\}$, let us write
$$E_i=\det(X_i)-T^2$$
for the homogeneous element in $\C[\M^m]^{\deg}$ of degree $2$, with the identification $\C[\M^m]^{\deg}=\C[(X_1)_{ij},\dots,(X_m)_{ij},T]$ from Section \ref{sect:2.5}.

\begin{lemma}
\label{fl}
We have the following.
\begin{enumerate}
	\item[\textup{(1)}] $\C[\M^m]^{\deg}/(E_1,\dots,E_m)$ is a normal graded Cohen-Macaulay domain.
	\item[\textup{(2)}] $E_1,\dots,E_m$ is a regular sequence in $\C[\M^m]^{\deg}$.
	\item[\textup{(3)}] We have $\C[\Rep_m]^\sigma\simeq\C[\M^m]^{\deg}/(E_1,\dots,E_m)$.
\end{enumerate}
\end{lemma}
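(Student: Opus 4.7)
The plan is to prove the parts in the order (2), (3), (1), since each subsequent part draws on the previous one.

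For (2), I would set $A := \Cb[\Mb^m]^{\deg} \cong \Cb[(X_k)_{ij}, T]$, which is a polynomial ring in $4m+1$ variables and hence Cohen-Macaulay. By Krull's height theorem, $\dim A/(E_1, \ldots, E_m) \geq 4m + 1 - m = 3m + 1$. On the other hand, quotienting further by $T - 1$ yields $\Cb[(X_k)_{ij}]/(\det(X_k) - 1)_{k}$, which equals $\Cb[\SL_2^m]$ and has dimension $3m$, so $\dim A/(E_1, \ldots, E_m) \leq 3m + 1$. Thus $(E_1, \ldots, E_m)$ has height exactly $m$, and in the Cohen-Macaulay ring $A$ this forces $E_1, \ldots, E_m$ to be a regular sequence.

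For (3), the Rees construction yields a canonical graded surjection
$$\pi: A/(E_1, \ldots, E_m) \twoheadrightarrow \Cb[\Rep_m]^\sigma,$$
since each $E_i$ is the degree-$2$ homogenization of $\det(x_i) - 1$, and $(\det(x_i) - 1)_i$ generates the kernel of $\Cb[\Mb^m] \twoheadrightarrow \Cb[\Rep_m]$. By Lemma \ref{2l1}, both sides of $\pi$ have $\Cb[\Rep_m]$ as their $T = 1$ specialization, and $\pi$ dehomogenizes to the identity. Equivalently, $\pi$ becomes an isomorphism after inverting $T$, so $\ker\pi$ is $T$-torsion. To conclude $\ker\pi = 0$, it suffices that $T$ be a nonzerodivisor on $A/(E_1, \ldots, E_m)$: but $(A/(E_i))/(T) = \Cb[(X_k)_{ij}]/(\det(X_k))_k$ has dimension $3m = (3m+1) - 1$, so $T$ cuts the dimension by one and hence is regular in the Cohen-Macaulay ring $A/(E_i)$.

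For (1): Cohen-Macaulayness is immediate from (2). The integral domain property follows from (3) combined with Lemma \ref{lema}(1), since $\Cb[\Rep_m] = \Cb[\SL_2^m]$ is a domain. Normality follows by applying Lemma \ref{2l2} to $R := A/(E_1, \ldots, E_m)$ with $t = T \in R_1$: the ring satisfies $R_0 = \Cb$, is Cohen-Macaulay, and is pure-dimensional (being a $(3m+1)$-dimensional domain); the fiber $X_1 = \Spec \Cb[\Rep_m] = \SL_2^m$ is smooth and so normal; and the fiber $X_0 = \Spec \bigotimes_{k=1}^m \Cb[(X_k)_{ij}]/(\det(X_k))$ is a tensor product over $\Cb$ of $m$ integral quadric cones in $\Ab^4$, hence integral of dimension $3m = \dim R - 1$. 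The main obstacle is the injectivity step in (3); the key is to combine the $T = 1$ dehomogenization identification with the $T$-regularity on $A/(E_i)$, both of which rely on (2).
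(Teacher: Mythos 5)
Your proposal is correct, and it reaches the lemma by a genuinely different route than the paper. The paper proves (1) and (2) together by induction on $m$: writing $\Rep_s'=\Spec\Cb[\Mb^s]^{\deg}/(E_1,\cdots,E_s)$, it uses the factorization $\Cb[\Mb^m]^{\deg}/(E_1,\cdots,E_s)=\Cb[\Rep_s']\otimes\Cb[(X_{s+1})_{ij},\cdots,(X_m)_{ij}]$ so that the inductive normality statement makes each successive quotient a domain (whence the regular sequence), and then applies Lemma \ref{2l2} at the top of the induction exactly as you do, with the same two fibers $\SL_2^m$ (smooth) and $\Mb_0^m$ (integral of dimension $3m$). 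For (3) the paper argues differently from you: the natural surjection $\Cb[\Mb^m]^{\deg}/(E_1,\cdots,E_m)\to\Cb[\Rep_m]^\sigma$ has integral source of dimension $3m+1$ and integral target of dimension at least $3m+1$ (by Lemma \ref{lema} together with Lemma \ref{2l1}), so it must be injective. You replace the induction by the Cohen-Macaulay criterion that $m$ elements generating a height-$m$ ideal form a regular sequence, and you replace the dimension count in (3) by the observation that the kernel is $T$-power torsion while $T$ is a nonzerodivisor. Both work: the paper's induction avoids quoting unmixedness-type facts about Cohen-Macaulay rings, at the cost of the bookkeeping with $\Rep_s'$, while your argument is non-inductive and isolates the only geometric inputs, namely the smoothness of $\SL_2^m$ and the integrality and dimension of $\Mb_0^m$. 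When you invoke the height criterion and the regularity of $T$, note that these concern homogeneous elements of a graded ring with degree-zero part $\Cb$, so they can be checked at the irrelevant maximal ideal, where the local Cohen-Macaulay theory applies; this is the same level of detail the paper allows itself.

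One inference should be repaired. In (2) you deduce $\dim \Cb[\Mb^m]^{\deg}/(E_1,\cdots,E_m)\leq 3m+1$ from the fact that the quotient by $T-1$ has dimension $3m$; by itself this is not a valid step, since an irreducible component could be contained in $\{T=0\}$ and would then be invisible in the $T=1$ slice (Lemma \ref{2l1} only controls the open locus $T\neq0$). The bound is nevertheless correct and follows from ingredients you already use: writing $R=\Cb[\Mb^m]^{\deg}/(E_1,\cdots,E_m)$, one has $\dim R=\max\bigl(\dim R[1/T],\dim R/(T)\bigr)$ for finite-type $\Cb$-algebras, with $R[1/T]\simeq\Cb[\Rep_m][T,T^{-1}]$ of dimension $3m+1$ by Lemma \ref{2l1}, and $R/(T)=\Cb[(X_1)_{ij},\cdots,(X_m)_{ij}]/(\det(X_1),\cdots,\det(X_m))$ of dimension $3m$, a computation you carry out later anyway. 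With this substitution the rest of your argument goes through as written. A minor remark on (1): Lemma \ref{2l2} already concludes that $R$ is a normal domain, so the appeal to (3) and Lemma \ref{lema} for integrality is harmless but not needed.
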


\begin{proof}
Let us write $\Rep_m'=\Spec\C[\M^m]^{\deg}/(E_1,\dots,E_m)$. Our proof of (1) and (2) will proceed by induction on $m$. Both statements are clear when $m=1$ (we note by the Jacobian criterion that the singular locus of $\Rep_1'$ consists of a single point). So let $m\geq2$. For any $s\in\{1,\dots,m\}$, note that we have
$$\C[\M^m]^{\deg}/(E_1,\dots,E_s)=\C[\Rep_s']\otimes\C[(X_{s+1})_{ij},\dots,(X_m)_{ij}].$$
In particular, each
$$\C[\M^m]^{\deg}/(E_1,\dots,E_s)$$
is an integral domain, showing that $E_1,\dots,E_m$ is a regular sequence in $\C[\M^m]^{\deg}$, and $\C[\Rep_m']$ is a graded Cohen-Macaulay ring of pure dimension $3m+1$. Consider the $\G_m$-equivariant morphism
$$\pi:\Rep_m'\to\A^1$$
associated to the morphism of graded rings $\C[t]\to \C[\Rep_m']$ given by $t\mapsto T$. The fiber $\pi^{-1}(1)=\SL_2^m$ is smooth, and $\pi^{-1}(0)=\M_0^m$ is reduced of dimension $3m$. We thus conclude by Lemma \ref{2l2} that $\C[\Rep_m']$ is a normal domain. This completes the induction, and we have proven (1) and (2). It remains to prove (3). Note that we have a natural surjective ring homomorphism $\C[\Rep_m']\to\C[\Rep_m]^\sigma$. Note that $\C[\Rep_m]^\sigma$ is integral by Lemma \ref{lema}, and has dimension at least $3m+1$. Thus, we see that $\C[\Rep_m']\to\C[\Rep_m]^\sigma$ must be an isomorphism, as the former is also integral of dimension $3m+1$. This completes the proof of the lemma.
\end{proof}

Let $\Fil^\sigma$ be the $\sigma$-word filtration on $R_m$ defined in Section \ref{sect:3.1}, and let $R_m^\sigma$ be the associated Rees algebra.

\begin{proposition}
\label{propcm}
The ring $R_m^\sigma$ is a normal graded Cohen-Macaulay domain.
\end{proposition}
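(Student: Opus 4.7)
The plan is to realize $R_m^\sigma$ as the ring of $\SL_2$-invariants of the normal graded Cohen-Macaulay domain $\Cb[\Rep_m]^\sigma$ produced by Lemma \ref{fl}, and then invoke classical invariant theory for linearly reductive group actions.

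First, I would combine Proposition \ref{propmorph}, which yields $\Fil_r^\sigma R_m=(\Fil_r^\sigma\Cb[\Rep_m])^{\SL_2}$ for each $r\geq0$, with the observation that the $\SL_2$-conjugation action preserves the word filtration (so the $\SL_2$-action on $\Cb[\Rep_m]^\sigma$ preserves its Rees grading). Taking direct sums over $r$ then gives a canonical identification of graded rings
$$R_m^\sigma=\bigl(\Cb[\Rep_m]^\sigma\bigr)^{\SL_2}.$$

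By Lemma \ref{fl}, the ring $\Cb[\Rep_m]^\sigma$ is identified with the quotient $\Cb[\Mb^m]^{\deg}/(E_1,\cdots,E_m)$ of a polynomial ring by a regular sequence of $\SL_2$-invariants, and is moreover a normal graded Cohen-Macaulay domain. Since $\SL_2$ is linearly reductive, the Reynolds operator shows that $E_1,\cdots,E_m$ generates the kernel of $(\Cb[\Mb^m]^{\deg})^{\SL_2}\to R_m^\sigma$ and continues to form a regular sequence inside $(\Cb[\Mb^m]^{\deg})^{\SL_2}$. The Hochster-Roberts theorem \cite{hr} supplies Cohen-Macaulayness of $(\Cb[\Mb^m]^{\deg})^{\SL_2}$ (as invariants of a polynomial ring under a linearly reductive group), and killing a regular sequence preserves this property, yielding that $R_m^\sigma$ is Cohen-Macaulay. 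Normality then follows from the general fact that the invariant subring of a normal domain under a reductive group action is again a normal domain: any element of $\Frac(R_m^\sigma)$ integral over $R_m^\sigma$ is a fortiori integral over $\Cb[\Rep_m]^\sigma$, hence lies in the latter by normality, and being $\SL_2$-invariant lies in $R_m^\sigma$; the domain property is automatic from the inclusion $R_m^\sigma\subset\Cb[\Rep_m]^\sigma$, or alternatively from Lemma \ref{lema}(1).

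I do not anticipate a serious obstacle: all the substantive technical content---Cohen-Macaulayness, normality, and the regular sequence property at the level of $\Cb[\Rep_m]^\sigma$---has already been collected in Lemma \ref{fl}, where the genuinely hard work (the inductive dimension count for the fibers of $\pi\colon\Rep_m'\to\Ab^1$ and Serre's normality criterion) was done. What remains is classical invariant theory, and the only point requiring real care is the verification that $E_1,\cdots,E_m$ persists as a regular sequence after passing to $\SL_2$-invariants, which reduces to a short Reynolds operator argument.
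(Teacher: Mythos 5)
Your proposal is correct and follows essentially the same route as the paper: identify $R_m^\sigma$ with $(\Cb[\Rep_m]^\sigma)^{\SL_2}$ via Proposition \ref{propmorph} and Lemma \ref{fl}, get Cohen-Macaulayness of $(\Cb[\Mb^m]^{\deg})^{\SL_2}$ from Hochster--Roberts, transfer the regular sequence $E_1,\cdots,E_m$ to the invariant ring by purity (your Reynolds operator argument is exactly the paper's appeal to $(\Cb[\Mb^m]^{\deg})^{\SL_2}$ being a pure subring), and deduce normality from normality of $\Cb[\Rep_m]^\sigma$ by the standard fact about invariants of a normal domain. No gaps.
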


\begin{proof}
First, by a result of Hochster-Roberts \cite[Main Theorem]{hr}, we see that $(\C[\M^m]^{\deg})^{\SL_2}$ is Cohen-Macaulay. We next claim that the surjective homomorphism $\Phi_\sigma:(\C[\M^m]^{\deg})^{\SL_2}\to R_m^\sigma$ constructed in Proposition \ref{propmorph} induces an isomorphism
$$(\C[\M^m]^{\deg})^{\SL_2}/(E_1,\dots,E_m)\simeq R_m^\sigma.$$
Indeed, since $(\C[\M^m]^{\deg})^{\SL_2}$ is a pure subring of $\C[\M^m]^{\deg}$, the left hand side is isomorphic to $(\C[\M^m]^{\deg}/(E_1,\dots,E_m))^{\SL_2}=(\C[\Rep_m]^\sigma)^{\SL_2}=R_m^\sigma$ by Lemma \ref{fl}, which is the desired result. Since $\C[\Rep_m]^\sigma$ is a normal graded domain by Lemma \ref{fl}, so is $R_m^\sigma$. To show that $R_m^\sigma$ is Cohen-Macaulay, it suffices to show that $E_1,\dots,E_m$ is a regular sequence in $(\C[\M^m]^{\deg})^{\SL_2}$. But this follows from the fact that $E_1,\dots,E_m$ is a regular sequence in $\C[\M^m]^{\deg}$ and that $(\C[\M^m]^{\deg})^{\SL_2}$ is a pure subring of $\C[\M^m]^{\deg}$. The desired result follows.
\end{proof}

The Hilbert series $H_m(t)$ of the graded algebra $R_m^\sigma$ is by definition the following formal power series. Note that $H_m(t)$ is well-defined by Lemma \ref{dimlem}.
$$H_m(t)=\sum_{r=0}^\infty(\dim_\C (R_m^\sigma)_r)t^r=\sum_{r=0}^\infty (\dim_\C\Fil_r^\sigma R_m)t^r\in\Z[[t]].$$

\begin{theorem}
\label{hilbert}
Assume that $m\geq2$. Then $H_m(t)$ is rational and satisfies
$$H_m(1/t)=(-1)^{3m-2}t^{2m+1}H_m(t).$$
In particular, $R_m^\sigma$ is Gorenstein with canonical module $R_m^\sigma(-2m-1)$.
\end{theorem}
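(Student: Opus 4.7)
My plan is to reduce the claimed functional equation to a classical palindromic identity for the Hilbert series of the ring of $\SL_2$-invariants on $m$ matrices due to Le Bruyn, and then extract the Gorenstein property and canonical module from Stanley's criterion.

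First, using Example \ref{rees}.(3), I identify $\Cb[\Mb^m]^{\deg}$ with the polynomial ring on $4m+1$ variables of degree $1$, on which $\SL_2$ acts by conjugation on the matrix entries and trivially on $T$. This gives
$$A := (\Cb[\Mb^m]^{\deg})^{\SL_2} \cong \Cb[\Mb^m]^{\SL_2}[T], \qquad H_A(t) = \frac{H_{\Cb[\Mb^m]^{\SL_2}}(t)}{1-t}.$$
From the proof of Proposition \ref{propcm}, $R_m^\sigma \cong A/(E_1,\ldots,E_m)$ with $E_1,\ldots,E_m$ a regular sequence of homogeneous degree-$2$ elements, whence
$$H_m(t) = (1-t^2)^m H_A(t) = \frac{(1-t^2)^m}{1-t}\,H_{\Cb[\Mb^m]^{\SL_2}}(t).$$
Rationality of $H_m(t)$ is thereby reduced to rationality of the right-hand factor.

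Next, I would invoke Le Bruyn's palindromic identity
$$H_{\Cb[\Mb^m]^{\SL_2}}(1/t) = -t^{4m}\,H_{\Cb[\Mb^m]^{\SL_2}}(t),$$
checkable already for $m=2$, where the invariant ring is freely generated by elements of degrees $1,1,2,2,2$. Combined with the elementary identity $(1-1/t^2)^m/(1-1/t) = (-1)^{m+1}\,t^{1-2m}\,(1-t^2)^m/(1-t)$, substitution gives
$$H_m(1/t) = (-1)^{m+1}\,t^{1-2m} \cdot (-t^{4m}) \cdot H_m(t) = (-1)^m\,t^{2m+1}\,H_m(t) = (-1)^{3m-2}\,t^{2m+1}\,H_m(t),$$
since $(-1)^m = (-1)^{3m-2}$.

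Finally, for the Gorenstein conclusion I would apply Stanley's criterion \cite{stanley}: a connected graded Cohen-Macaulay domain $R$ of Krull dimension $d$ is Gorenstein if and only if $H_R(1/t) = (-1)^d\,t^{a}\,H_R(t)$ for some $a\in\Zb$, in which case $\omega_R \cong R(-a)$. By Proposition \ref{propcm} and Lemma \ref{dimlem}, $R_m^\sigma$ is a normal graded Cohen-Macaulay domain with $(R_m^\sigma)_0 = \Cb$, and $\dim R_m^\sigma = 3m-2$ (matching the sign $(-1)^{3m-2}$), so Stanley's criterion yields that $R_m^\sigma$ is Gorenstein with canonical module $R_m^\sigma(-2m-1)$. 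The main obstacle will be establishing Le Bruyn's functional equation with the precise sign and exponent appropriate to the grading convention used here (matrix entries in degree $1$); once this is in hand, the remainder is careful bookkeeping with Hilbert series.
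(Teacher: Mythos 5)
Your proposal is correct and follows essentially the same route as the paper: the identity $H_m(t)=\frac{(1-t^2)^m}{1-t}\,h_m(t)$ via the regular sequence $E_1,\dots,E_m$ of degree-$2$ invariants, Le Bruyn's functional equation $h_m(1/t)=-t^{4m}h_m(t)$, and then Stanley's criterion together with Proposition \ref{propcm} and the dimension count $\dim R_m^\sigma=3m-2$. The only cosmetic difference is that the paper treats $m=2$ by a direct computation with the Fricke coordinates while you verify Le Bruyn's identity directly in that case; both are fine.
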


\begin{proof}
For $m=2$ and $F_2$ a free group on generators $\sigma=\{a_1,a_2\}$, we may verify the functional equation by direct computation as follows. Recall the isomorphism $(\tr_{a_1},\tr_{a_2},\tr_{a_1a_2}):X_2\simeq\A^3$ from Example \ref{exfree}. It follows $R_2^\sigma\simeq\C[X_1,X_2,X_3,T]$ with each $X_1,X_2,T$ given degree 1 and $X_3$ given degree $2$, so that the Krull dimension of $R_2^\sigma$ is $4$ and
$$H_2(t)=\frac{1}{(1-t)^3(1-t^2)}$$
from which the desired symmetry follows. For $m\geq3$, we have the following result of Le Bruyn \cite{lebruyn} (based on rational expressions due to Weyl and Schur, cf.~\emph{loc.cit.}): the Hilbert series $h_m(t)$ of the graded algebra $\C[\M^m]^{\SL_2}$ has the functional equation $h_m(1/t)=-t^{4m}h_m(t)$. Our Hilbert series $H_m(t)$ is related to the series $h_m(t)$ by
$$H_m(t)=\frac{(1-t^2)^m}{1-t}h_m(t),$$
where the factor $(1-t^2)^m/(1-t)$ arises as we take the $\SL_2$-invariants of
$$\C[\M^m]^{\deg}/(E_1,\dots,E_m)$$
instead of $\C[\M^m]$, noting that $E_1,\dots,E_m$ is a regular sequence of homogeneous elements of degree $2$ in $\C[\M^m]^{\deg}$. Thus, the functional equation of $H_m(t)$ follows from that of $h_m(t)$. Now, the Krull dimension of $R_m^\sigma$ is $(3m-3)+1$. Hence, our last claim follows from the functional equation and Proposition \ref{propcm}, by the result of Stanley \cite[Theorem 4.4]{stanley} (see also \cite[Corollary 4.4.6, p.177]{brh}).
\end{proof}

\subsection{Relative character varieties} \label{sect:3.2}
Let $\Sigma_{g,n}$ be a compact oriented surface of genus $g\geq0$ with $n\geq1$ boundary components such that $\chi(\Sigma_{g,n})=2-2g-n<0$. We fix a standard presentation of the fundamental group
$$\pi_1(\Sigma_{g,n})=\langle a_1,\dots,a_{2g+n}|[a_1,a_2]\dotsm[a_{2g-1},a_{2g}] a_{2g+1}\dotsm a_{2g+n}\rangle.$$
By our assumption, $\pi_1(\Sigma_{g,n})$ is a free group of rank $m=2g+n-1\geq2$. We shall refer to the set $\sigma=\{a_1,\dots,a_{2g+n-1}\}$ of free generators as \emph{standard}.

Let $\Rep_{g,n}=\Rep_{\pi_1(\Sigma_{g,n})}$ be the representation variety of $\Sigma_{g,n}$ (or of $\pi_1(\Sigma_{g,n})$), and let $X_{g,n}=\Rep_{g,n}\git\SL_2$ be the character variety of $\Sigma_{g,n}$. The latter is also the coarse moduli space of $\SL_2(\C)$-local systems on $\Sigma_{g,n}$. Note that the standard free generators $\sigma$ of the fundamental group gives us isomorphisms
\begin{align*}
&\sigma^*:\Rep_{g,n}\simeq\Rep_m=\SL_2^m,\\
&\sigma^*:X_{g,n}\simeq X_m
\end{align*}
where $\Rep_m$ and $X_m$ are as in Section \ref{sect:3.1}.
For each $k=(k_1,\dots,k_n)\in\A^n(\C)$, the corresponding fibers $\Rep_{g,n,k}$ and $X_{g,n,k}$ of the morphisms
\begin{align*}
&(\tr(x_{2g+1}),\dots,\tr(x_{2g+n-1}),\tr(x_{2g+n})):\Rep_{g,n}\to\A^n,\\
&(\tr_{a_{2g+1}},\dots,\tr_{a_{2g+n}}):X_{g,n}\to\A^n
\end{align*}
(where $x_1,\dots,x_{2g+n-1}$ are the matrix variables of $\SL_2^m$ and we abbreviated here $x_{2g+n}=(\langle x_1,x_2\rangle\dotsm\langle x_{2g-1},x_{2g}\rangle x_{2g+1}\dotsm x_{2g+n-1})^*$ for simplicity) are called the \emph{relative representation variety} and \emph{relative character variety} of $\Sigma_{g,n}$. Let us denote $R_{g,n}=\C[X_{g,n}]$ and by $R_{g,n,k}=\C[X_{g,n,k}]$.

\begin{example}
\label{exrel}
We review the presentations of some relative character varieties for surfaces of small Euler characteristic, following Goldman \cite{goldman2}.
\begin{enumerate}
	\item[\textup{(1)}] $(g,n)=(0,3)$. By Example \ref{exfree}.(2), we have $({\tr_{a_1}},{\tr_{a_2}},{\tr_{a_1a_2}}):X_{0,3}\simeq \A^3$. Since the $3$ boundary components of $\Sigma_{0,3}$ correspond to $a_1$, $a_2$, and $a_1a_2$, we see that $X_{0,3,k}$ is simply a point for every $k\in\A^3(\C)$.
	\item[\textup{(2)}] $(g,n)=(1,1)$. By Example \ref{exfree}.(2), we have
	$$({\tr_{a_1}},{\tr_{a_2}},{\tr_{a_1a_2}}):X_{1,1}\simeq\A_{(x,y,z)}^3$$
	where $(x,y,z)$ denotes the sequence of standard coordinate functions on $\A^3$. The boundary component of $\Sigma_{1,1}$ corresponds to $\langle a_1,a_2\rangle$, which defines the function given by the expression
	$$[\langle a_1,a_2\rangle]=f(x,y,z)=x^2+y^2+z^2-xyz-2$$
	obtained using the identities ${\tr_{a}}{\tr_{b}}={\tr_{ab}}+{\tr_{ab^{-1}}}$ in the coordinate ring $R_2$. Thus, $X_{1,1,k}$ for $k\in\A^1(\C)$ is an affine cubic surface which is given by a level set $f^{-1}(k)$ in $\A^3$ of the above function, also called a (generalized) Markoff surface. Looking at the partial derivatives of the above function, its critical locus is given by the conditions
	\begin{align*}
	2x=yz,\quad 2y=xz,\quad \text{and}\quad 2z=xy.
	\end{align*}
	The critical locus is thus $\{(0,0,0),(s_12,s_22,s_32):s_1s_2s_3=1, s_i\in\{\pm1\}\}$, and every relative character variety $X_{1,1,k}$ is normal.
	\item[\textup{(3)}] $(g,n)=(0,4)$. Using Example \ref{exfree}.(3), given $k=(k_1,\dots,k_4)\in\A^4(\C)$ we have a closed immersion
		$$({\tr_{a_1a_2}},{\tr_{a_2a_3}},{\tr_{a_1a_3}}):X_{0,4}\hookrightarrow\A_{(x,y,z)}^3$$
		with the image given by the affine cubic surface
	$$f_k(x,y,z)=x^2+y^2+z^2+xyz-ax-by-cz-d=0$$
	where we define $a=k_1k_2+k_3k_4$, $b=k_2k_3+k_1k_4$, $c=k_1k_3+k_2k_4$, and $d=4-k_1^2-k_2^2-k_3^2-k_4^2-k_1k_2k_3k_4$. The critical locus of the function $f$ is given by the conditions
	$$2x+yz-a=0,\quad 2y+xz-b=0,\quad 2z+xy-c=0.$$
	We see that the locus $\subset\A^3$ defined by the above three equations is finite. Hence, every relative character variety $X_{0,4,k}$ is normal.
\end{enumerate}
\end{example}

From Section \ref{sect:2}, we have the $\sigma$-word filtration $\Fil^\sigma$ on the coordinate ring $R_{g,n}$ with Rees algebra $R_{g,n}^\sigma=(\C[\Rep_{g,n}]^\sigma)^{\SL_2}$. For fixed $k=(k_1,\dots,k_n)\in\A^n(\C)$ as before, let $F_1,\dots,F_{n-1}$ and $F_n$ respectively be the homogeneous elements in $R_{g,n}^\sigma$ of degree $1,\dots,1$ and $4g+n-1$ given by
\begin{align*}
F_1&={\tr_{a_{2g+1}}}-k_1t,\quad\dots,\quad F_{n-1}={\tr_{a_{2g+n-1}}}-k_{n-1}t,\quad\text{and}\\
F_n&={\tr_{a_{2g+n}}}-k_nt^{4g+n-1}.
\end{align*}
Here, $F_n$ is homogeneous since $\length_\sigma(a_{2g+n})=4g+n-1$. Let us define the quotient
$$R_{g,n,k}^\sigma=R_{g,n}/(F_1,\dots,F_n).$$
(This notation is unambiguous; it will follow from our work that the above ring is the same as the Rees algebra associated to the projection of the $\sigma$-word filtration $\Fil^\sigma$ on $R_{g,n,k}$.) Note that $R_{g,n,k}^\sigma/(t-1)=R_{g,n,k}$. The pair
$$(Z,D)=(\Proj R_{g,n,k}^\sigma,\Proj R_{g,n,k}^\sigma/(t))$$
is our projective compactification and boundary divisor of $X_{g,n,k}$ mentioned in Theorem \ref{mtheorem}. The goal of this section is to verify that $(Z,D)$ satisfies the desired properties.

\begin{theorem}
\label{gorenst}
The ring $R_{g,n,k}^\sigma$ is a normal graded Gorenstein domain of dimension $6g+2n-5$, with canonical module $R_{g,n,k}^\sigma(-1)$.
\end{theorem}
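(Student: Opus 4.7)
The plan is to present $R_{g,n,k}^\sigma$ as the quotient of $R_{g,n}^\sigma$ by the homogeneous ideal $(F_1,\ldots,F_n)$ and to transport the ring-theoretic properties of the latter to the former. By Proposition \ref{propcm} and Theorem \ref{hilbert}, $R_{g,n}^\sigma$ is already a normal graded Gorenstein domain of Krull dimension $6g+3n-5$ with canonical module $R_{g,n}^\sigma(-(4g+2n-1))$, where I write $m=2g+n-1$ so that $2m+1=4g+2n-1$. All four claims of the theorem will then follow once one knows that $F_1,\ldots,F_n$ is a regular sequence in $R_{g,n}^\sigma$, supplemented by Lemma \ref{2l2} for the normal domain property.

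The crucial step is thus to prove that $F_1,\ldots,F_n$ is a regular sequence. Because $R_{g,n}^\sigma$ is Cohen--Macaulay, a sequence of homogeneous elements is regular if and only if the quotient has the expected codimension, so it suffices to check $\dim R_{g,n,k}^\sigma = 6g+2n-5$. The inequality $\geq$ is automatic: Lemma \ref{2l1} identifies $\Spec R_{g,n,k}^\sigma[1/t]$ with $X_{g,n,k}\times\Gb_m$, and the standard dimension count for relative character varieties gives $\dim X_{g,n,k}=6g+2n-6$. The matching upper bound requires controlling the fiber of $\Spec R_{g,n,k}^\sigma\to\A^1_t$ at $t=0$, and this is precisely the singularity/dimension analysis of representation varieties carried out in Section 4.

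Once the regular sequence property is in place, $R_{g,n,k}^\sigma$ is Cohen--Macaulay of dimension $6g+2n-5$; moreover, quotienting a graded Gorenstein ring with canonical shift $c$ by a homogeneous regular sequence of degrees $d_1,\ldots,d_n$ preserves Gorensteinness with new canonical shift $c+\sum d_i$ (by the standard shift formula, cf.~Bruns--Herzog). Here $c=-(4g+2n-1)$ and $d_1+\cdots+d_n = (n-1)\cdot 1 + (4g+n-1) = 4g+2n-2$, giving new canonical shift $-1$ and hence canonical module $R_{g,n,k}^\sigma(-1)$. One can equivalently verify this from the Hilbert series: multiplying the functional equation of Theorem \ref{hilbert} by $(1-t)^{n-1}(1-t^{4g+n-1})$ and simplifying yields $H_{g,n,k}(1/t) = (-1)^{6g+2n-5}\,t\,H_{g,n,k}(t)$, to which Stanley's criterion applies as in the proof of Theorem \ref{hilbert}.

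To obtain the normal domain property I will apply Lemma \ref{2l2} to $A = R_{g,n,k}^\sigma$ with the non-nilpotent degree one element $t$. Pure-dimensional Cohen--Macaulayness and connectedness $((R_{g,n,k}^\sigma)_0 = \Cb)$ hold by the previous step, so what remains is to verify (a) that $X_{g,n,k} = \Spec R_{g,n,k}^\sigma/(t-1)$ is a normal scheme, and (b) that $\Spec R_{g,n,k}^\sigma/(t)$ is reduced of dimension $6g+2n-6$. The dimension in (b) is automatic. By Serre's criterion, (a) reduces to showing that the singular locus of $X_{g,n,k}$ has codimension at least $2$, while (b) (given Cohen--Macaulayness) reduces to showing that the $t=0$ fiber is generically reduced; both are codimension estimates on singular loci inside representation varieties. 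I expect this uniform geometric input---holding for all $(g,n,k)$ with $\chi(S_{g,n})<0$---to be the main obstacle, and it is exactly what the dimension analysis of Section 4 is designed to supply.
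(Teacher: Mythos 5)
Your outline is essentially the paper's proof: present $R_{g,n,k}^\sigma$ as $R_{g,n}^\sigma/(F_1,\dots,F_n)$, show the $F_i$ form a regular sequence, read off Gorensteinness and the canonical shift $-2m-1+(n-1)+(4g+n-1)=-1$ from Theorem \ref{hilbert}, and then get the normal domain property from Lemma \ref{2l2} using the Section 4 input (normality of the relative representation variety and reducedness of the $t=0$ fiber). The one genuine difference is the regular-sequence step: the paper does not argue by dimension inside $R_m^\sigma$ but instead checks that the successive quotients $\Cb[\Rep_m]^\sigma/(F_1,\dots,F_s)$ are integral domains (via Lemma \ref{2l2} at the representation-variety level) and then descends regularity through the purity of $(\Cb[\Mb^m]^{\deg})^{\SL_2}\subset\Cb[\Mb^m]^{\deg}$, exactly as in Proposition \ref{propcm}; your CM-unmixedness argument is also legitimate, but note that the needed upper bound is not only about the $t=0$ fiber --- the bound $\dim X_{g,n,k}\leq 6g+2n-6$ on the $t\neq0$ locus is likewise not automatic (a component of $X_{g,n}$ could a priori sit inside a trace level set) and in this paper it is supplied by the same Section 4 analysis (Lemmas \ref{6d1} and \ref{6d2}), together with the descent of dimension and reducedness statements from $\Rep$-level to $\SL_2$-invariants via linear reductivity. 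Finally, your appeal to a ``uniform'' Section 4 input overstates what is there: Propositions \ref{6s1} and \ref{6s3} carry the hypotheses $m\geq4$ and $m\geq3$, so the low Euler characteristic cases are handled separately in the paper --- $(1,2)$ by Proposition \ref{6s2}, $(0,4)$ and $(0,3)$ by Example \ref{exrel}, and $(1,1)$ by Example \ref{exrel} for normality together with the explicit computation $R_{1,1,k}^\sigma/(t)\simeq\Cb[x,y,z]/(z(z-xy))$ for reducedness of the boundary fiber --- and a complete write-up of your proposal would have to include these cases as well.
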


\begin{proof}
We have an identification $R_{g,n}^\sigma=R_m^\sigma$. We claim that $F_1,\dots,F_n$ is a regular sequence in $R_m^\sigma=(\C[\Rep_m]^\sigma)^{\SL_2}$. Indeed, using the fact that $(\C[\M^m]^{\deg})^{\SL_2}$ is a pure subring of $\C[\M^m]^{\deg}$, it suffices to note that $\C[\Rep_m]^\sigma/(F_1,\dots,F_s)$ is an integral domain (of successively lower dimension) for $1\leq s\leq n-1$. This follows from an easy application of Lemma \ref{2l2} (as in the proof of Lemma \ref{fl}). Now, by Theorem \ref{hilbert}, $R_m^\sigma$ is a graded Gorenstein ring of dimension $3m-2$ with canonical module $R_m^\sigma(-2m-1)$.
Hence, it follows that
$$R_{g,n,k}^\sigma=R_{g,n}^\sigma/(F_1,\dots,F_n)$$
is a graded Gorenstein ring of dimension $3m-2-n=6g+2n-5$ with canonical module
$$R_{g,n,k}^\sigma(-2m-1+(n-1+4g+n-1))=R_{g,n,k}^\sigma(-1),$$
provided that we know $R_{g,n,k}^\sigma$ is a domain.
Indeed, $R_{g,n,k}^\sigma$ is a graded Cohen-Macaulay ring, being a quotient of $R_{g,n}^\sigma$ by a regular sequence $F_1,\dots,F_n$, and its Hilbert series is $(1-t)^{n-1}(1-t^{4g+n-1})$ times the Hilbert series of $R_{g,n}^\sigma$; by applying parts (c) and (b) of \cite[Corollary 4.4.6, p.177]{brh} and \cite[Theorem 4.4.5, p.176]{brh}, we would then deduce the above.

It thus remains to show that $R_{g,n,k}^\sigma$ is a normal domain. By Lemma \ref{2l2}, it suffices to show the following:
\begin{enumerate}
	\item[\textup{(1)}] $R_{g,n,k}$ is a normal ring, and
	\item[\textup{(2)}] $R_{g,n,k}^\sigma/(t)$ is reduced of dimension $6g+2n-6$.
\end{enumerate}
If $\chi(\Sigma_{g,n})\leq-3$, then (1) and (2) will follow from Propositions \ref{6s1} and \ref{6s3} proved in Section \ref{sect:4}. Let us now consider the cases where $|\chi(\Sigma_{g,n})|\leq2$. If $\chi(\Sigma_{g,n})=-2$, then (1) and (2) follow from Propositions \ref{6s2} and \ref{6s3} for $(g,n)=(1,2)$, and they follow from Example \ref{exrel}.(3) and Proposition \ref{6s3} for $(g,n)=(0,4)$. The case $\chi(\Sigma_{g,n})=-1$ remains. The case $(g,n)=(0,3)$ is trivial, and for $(g,n)=(1,1)$ Example \ref{exrel}.(2) verifies (1). It thus remains to verify (2) for $(g,n)=(1,1)$. But we find that
$$R_{1,1,k}^\sigma/(t)\simeq\C[x,y,z]/(z(z-xy)),$$
whose spectrum is the union of the plane $\{z=0\}$ with the surface $\{z=xy\}$ in $\A_{(x,y,z)}^3$, and hence is reduced and of dimension $2$. This completes the proof.
\end{proof}

Let $Z$ be the compactification of $X_{g,n,k}$ as above. Note that $Z$ is a normal irreducible projective scheme, since $R_{g,n,k}^\sigma$ is a normal domain by Theorem \ref{gorenst}. By the result of Demazure \cite[Th\'eor\`eme (3.5) p.51]{demazure}, there is an ample $\Q$-divisor $E$ on $Z$ such that
$$R_{g,n,k}^\sigma=\bigoplus_{r=0}^\infty H^0(Z,\Ocal_Z(rE)) t^r$$
where $t\in(R_{g,n,k}^\sigma)_1$ is the image of $t\in(R_{g,n}^\sigma)_1$ under the projection $R_{g,n}^\sigma\to R_{g,n,k}^\sigma$. Following Demazure (\emph{loc.cit.}), the $\Q$-divisor $E$ is given explicitly as follows. Let us write $\Div(t)=\sum p_F F$ for the Weil divisor on $\Spec R_{g,n,k}^\sigma$ determined by $t$. Note that each $F$ is given by $\Spec B_F$ where $B_F$ is a graded integral quotient of $R_{g,n,k}^\sigma$. Defining $q_F\geq0$ by
$$(B_F)_n\neq0\iff n\in q_F\Z,$$
the $\Q$-divisor $E$ on $Z$ is given by
$$E=\sum_{q_F\neq0}\frac{p_F}{q_F}\Proj(B_F).$$
\begin{lemma}
\label{deglem}
We have $p_F\in\{0,1\}$ and $q_F\in\{0,1,2\}$ for every prime Weil divisor $F$ of $\Spec R_{g,n,k}^\sigma $.
\end{lemma}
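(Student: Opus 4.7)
The first assertion $p_F\in\{0,1\}$ follows immediately from the reducedness of $R_{g,n,k}^\sigma/(t)$ established in the proof of Theorem~\ref{gorenst}. Since $R_{g,n,k}^\sigma$ is a normal domain, the ideal $(t)$ being radical forces the principal Weil divisor $\Div(t)=\sum_F p_F F$ to have all coefficients in $\{0,1\}$; nonnegativity is automatic from the definition of $p_F$ as the order of vanishing of $t$ at the height-one prime $\pfrak_F$.

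For the bound $q_F\in\{0,1,2\}$ my plan exploits the fact from Lemma~\ref{dimlem}(2) that $R_{g,n,k}^\sigma$ is generated by homogeneous elements of degree at most three. Consequently each graded quotient $B_F=R_{g,n,k}^\sigma/\pfrak_F$ is generated in degrees at most three, and since $B_F$ is a $\Zb_{\geq 0}$-graded integral domain, the support $S_F=\{n\geq 0:(B_F)_n\neq 0\}$ is a subsemigroup of $\Zb_{\geq 0}$ generated by those $d\in\{1,2,3\}$ for which $(B_F)_d\neq 0$. If $(B_F)_1\neq 0$---which occurs automatically whenever $t\notin\pfrak_F$, but also for boundary divisors on which an internal length-one trace $[a_i]$ survives---then $S_F=\Zb_{\geq 0}$ and $q_F=1$. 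Otherwise $(B_F)_1=0$, and it suffices to show $(B_F)_3=0$: this would leave only degree $2$ as a possible generator degree, giving $S_F\subseteq 2\Zb_{\geq 0}$ and $q_F\in\{0,2\}$.

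For the key step $(B_F)_3=0$ when $(B_F)_1=0$, I would decompose the degree-$3$ generators of $R_{g,n,k}^\sigma$ according to the number of distinct free generators appearing in the underlying length-$3$ word, and use the trace identities of Lemma~\ref{rellem}. Length-$3$ traces involving at most two distinct free generators reduce via identities such as $[a^2 b]=[a][ab]-[b]\, t^2$ (in the Rees algebra) to expressions in the ideal $I_1$ generated by degree-$1$ elements, and hence vanish in $B_F$. The remaining trivalent length-$3$ traces $[abc]$ with three distinct free generators satisfy $[abc]+[acb]\in I_1$ by Lemma~\ref{rellem}(1), and the quadratic relation $[abc]^2\equiv -[ab][bc][ac]\pmod{I_1}$ obtained from Lemma~\ref{rellem}(2). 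The hardest part of the argument, which I expect to be the main obstacle, is to combine this quadratic with the boundary relation $[a_{2g+n}]\equiv 0\pmod{t}$---itself expressible via repeated trace identities as a polynomial in degree-$\leq 2$ traces and the trivalent ones---to conclude that every trivalent length-$3$ trace lies in $\pfrak_F$. I would handle this using the dimension analysis of representation varieties from Section~4 (Propositions~\ref{6s1}, \ref{6s2}, \ref{6s3}) to bound the codimension of the locus cut out by simultaneous vanishing of $t$ and all length-one traces, ensuring that no boundary component can lie entirely in the locus where the trivalent traces persist nontrivially.
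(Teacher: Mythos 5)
Your handling of $p_F$ is fine and matches the paper: reducedness of $R_{g,n,k}^\sigma/(t)$ (Proposition~\ref{6s3}) means $(t)$ is a radical ideal in the normal domain $R_{g,n,k}^\sigma$, so $t$ has order exactly one along every component of its zero locus. The gap is in the $q_F$ bound. After the (correct) reduction ``if $(B_F)_1\neq0$ then $q_F=1$,'' you set yourself the task of proving $(B_F)_3=0$ assuming only $(B_F)_1=0$, and you do not prove it: you flag the decisive step as ``the main obstacle'' and propose to close it with the dimension analysis of Section~4, but Propositions~\ref{6s1}--\ref{6s3} control normality of $\Rep_{g,n,k}$ and reducedness of the central fiber; they say nothing about which graded pieces of the quotient $B_F$ by an arbitrary minimal prime over $(t)$ can vanish, and no mechanism is offered for converting a codimension bound into $(B_F)_3=0$. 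Moreover the congruence you invoke is wrong: working modulo the ideal $I_1$ generated by the degree-one elements ($t$ and the $[a_i]$), Lemma~\ref{rellem} gives $[abc]\equiv-[acb]$ and $[abc][acb]\equiv[ab]^2+[bc]^2+[ac]^2+[ab][bc][ac]$, so $-[abc]^2$ is congruent to that full right-hand side, not to $-[ab][bc][ac]$; when degree-two traces survive in $B_F$, nothing here forces $[abc]\in\pfrak_F$, and it is not at all clear that $(B_F)_1=0$ alone implies $(B_F)_3=0$.

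What you are missing is that the statement needed is weaker, and the extra hypothesis is exactly what makes it easy. To exclude $q_F=3$ one may assume $(B_F)_1=(B_F)_2=0$ simultaneously, since $q_F=3$ means the support of the grading of $B_F$ is $3\Zb_{\geq0}$. Then $t$, every $[a]$ with $\ell_\sigma(a)\leq1$, and every $[ab]$ with $\ell_\sigma(ab)\leq2$ vanish in $B_F$, so in the homogenized identities of Lemma~\ref{rellem}(1) every term on the right-hand sides (including the constant, which homogenizes to $4t^6$) has a homogeneous factor of degree at most two and hence dies in $B_F$. Since $B_F$ is a domain, $[abc][acb]=0$ forces one factor to vanish, and $[abc]+[acb]=0$ then kills both; as $\Fil_3^\sigma$ is spanned by such classes together with products having a factor of degree at most two, $(B_F)_3=0$, contradicting $q_F=3$. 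This contradiction argument is the paper's proof; your reduction overshoots the target, and the overshoot is precisely the part you cannot close.
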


\begin{proof}
The claim $p_F\in\{0,1\}$ follows from the fact that
$$\Spec(R_{g,n,k}^\sigma/t)=(\C[\Rep_{g,n,k}']/(t))^{\SL_2}$$
is reduced by Proposition \ref{6s3}. To see that $q_F\in\{0,1,2\}$, note that $B_F$ is generated in degrees $1$, $2$, and $3$ by Lemma \ref{dimlem} so we must have $q_F\in\{0,1,2,3\}$. Suppose toward contradiction that $q_F=3$, so that $(B_F)_1=(B_F)_2=0$. Given any $a_1,a_2,a_3\in\pi_1(\Sigma_{g,n})$, on $R_{g,n,k}$ we have the relations
$$
\tr_{a_1a_2a_3}+\tr_{a_1a_3a_2}=\tr_{a_1a_2}\tr_{a_3}+\tr_{a_1a_3}\tr_{a_2}+\tr_{a_2a_3}\tr_{a_1}-\tr_{a_1}\tr_{a_2}\tr_{a_3}
$$
and
\begin{align*}
\tr_{a_1a_2a_3}\tr_{a_1a_3a_2}&=(\tr_{a_1}^2+\tr_{a_2}^2+\tr_{a_3}^2)+(\tr_{a_1a_2}^2+\tr_{a_2a_3}^2+\tr_{a_1a_3}^2)&\\
&\quad -(\tr_{a_1}\tr_{a_2}\tr_{a_1a_2}+\tr_{a_2}\tr_{a_3}\tr_{a_2a_3}+\tr_{a_1}\tr_{a_3}\tr_{a_1a_3})\\
&\quad +\tr_{a_1a_2}\tr_{a_2a_3}\tr_{a_1a_3}-4.
\end{align*}
Since the right hand sides of both equations are zero on $B_F$ which is an integral domain, this shows that $\tr_{a_1a_2a_3}=\tr_{a_1a_3a_2}=0$ on $B_F$. This shows that we must have $(B_F)_3=0$ as well, a contradiction. Hence, we must have $q_F\in\{0,1,2\}$.
\end{proof}

Let $D=Z\setminus X_{g,n,k}$ be the reduced boundary divisor on $Z$. As a Weil divisor, $D$ is a formal sum $\sum_V V$ of prime divisors $V$ where $V=\Proj B_F$ runs over integral quotients $B_F$ of $R_{g,n,k}^\sigma/(t)$ by minimal homogeneous prime ideals. We are ready to prove the main part of Theorem \ref{mtheorem}, restated as follows.

\begin{theorem}
We have $K_Z+D\sim0$.
\end{theorem}

\begin{proof}
Combining Corollary \ref{gorenst} with the result of Watanabe \cite[Corollary 2.9]{watanabe}, we find that $K+E'+E=\Div(f)$ for some $f\in\C(Z)$ (the function field of $Z$) and
$$E'=\sum_{\substack{q_F\neq0}}\frac{q_F'-1}{q'_F}\Proj B_F,$$
where we have set $q_F'=q_F/\gcd(p_F,q_F)$ for $q_F\neq0$, so that in particular $q_F'=1$ if $p_F=0$.
But in light of Lemma \ref{deglem}, we have
\begin{align*}
0&\sim K+E'+E\\
&=K+\sum_{\substack{p_F=1\\q_F=2}}\frac{1}{2}\Proj B_F+\left\{\sum_{\substack{p_F=1\\q_F=2}}\frac{1}{2}\Proj B_F+\sum_{\substack{p_F=1\\q_F=1}}\Proj B_F\right\}\\
&=K+D
\end{align*}
from which we conclude the result.
\end{proof}

\begin{example}\label{examples}
For the values of $(g,n)$ considered below, one can describe the compactification $Z$ of $X_{g,n,k}$ and compute the Hilbert series $H_D(t)$ of $D$ (or of the graded ring $R_{g,n,k}^\sigma/(t)$) directly.

\begin{enumerate}
	\item $(g,n)=(1,1)$. By Example \ref{exfree}.(2), the word compactification $X_{1,1}^\sigma$ of $X_{1,1}$ is the weighted projective three-space $\P(1,1,1,2)$ and our compactification $Z$ is a hypersurface of degree $4$. As mentioned in the example of Section \ref{sect:1.3}, we have
	$$H_D(t)=\frac{1+t^2}{(1-t)^2}.$$
	\item $(g,n)=(0,4)$. The fundamental group of $\Sigma$ is free of rank $3$. By Example \ref{exfree}.(3), the word compactification $X_{0,4}^\sigma$ of $X_{0,4}$ is a complete intersection of two hypersurfaces of degree $3$ and $6$ in the weighted projective space
	$$\P(1,1,1,1,2,2,2,3,3).$$
	Here, the weights above appear from the presentation of $R_{0,4}$ as a quotient of the polynomial ring $\Q[\tr_{a_1},\tr_{a_2},\tr_{a_3},\tr_{a_1a_2},\tr_{a_2a_3},\tr_{a_1a_3},\tr_{a_1a_2a_3},\tr_{a_1a_3a_2}]$. Our compactification $Z$ of $X_{0,4,k}$ is then a complete intersection of four hypersurfaces of degree $1$, $1$, $1$, and $3=4g+n-1$ in $X_{0,4}^\sigma$. We therefore have
	$$H_D(t)=\frac{(1-t^3)(1-t^6)}{(1-t)^3(1-t^2)^3(1-t^3)^2}\cdot (1-t)^3(1-t^3)=\frac{1-t^6}{(1-t^2)^3}.$$
	\item $(g,n)=(1,2)$. As in the previous case, the fundamental group of $\Sigma$ is free of rank $3$. By Example \ref{exfree}.(3), the word compactification $X_{1,2}^\sigma$ is a complete intersection of two hypersurfaces of degree $3$ and $6$ in the weighted projective space $\P(1,1,1,1,2,2,2,3,3)$.
Our compactification $Z$ of $X_{1,2,k}$ is then a complete intersection of two hypersurfaces of degrees $1$ and $5=4g+n-1$ in $X_{1,2}^\sigma$. We therefore have
	$$H_D(t)=\frac{(1-t^3)(1-t^6)}{(1-t)^3(1-t^2)^3(1-t^3)^2}\cdot (1-t)(1-t^5)=\frac{(1+t^3)(1-t^5)}{(1-t)^2(1-t^2)^3}.$$
\end{enumerate}
In each of the above cases, we can verify directly that $H_D(1/t)=H_D(t)$, a symmetry which holds for general $(g,n)$ by Theorem \ref{gorenst} and \cite[Corollary 4.4.6, p.177]{brh}. Our work in Section \ref{sect:5} will show that $H_D(t)=Z_{g,n}(t)$ for general $(g,n)$. Thus, even when an explicit presentation for $X_{g,n,k}$ is not given, the series $H_D(t)=Z_{g,n}(t)$ can be computed using combinatorial arguments. For instance, the combinatorics in Section \ref{sect:6} can be used to show that
$$Z_{2,1}(t)=\frac{(1-t^8)(1+t^2+4t^3+t^4+t^6)}{(1-t)^4(1-t^2)^5}$$
(equal to $(1-t^8)Z_4(t)$ in the notation of Section \ref{sect:6}); we omit the details.
\end{example}

\section{Analysis of singularities} \label{sect:4}

The purpose of this section is to provide an analysis of singularities for various algebras appearing in Section \ref{sect:3}. For the convenience of the reader, in this paragraph we provide a brief outline of this section. Using the notation of the proof of Theorem \ref{gorenst}, what need to be proved are:
\begin{enumerate}
	\item[\textup{(1)}] $R_{g,n,k}$ is a normal ring, and
	\item[\textup{(2)}] $R_{g,n,k}^\sigma/(t)$ is reduced of dimension $6g+2n-6$.
\end{enumerate}
The goal of this section is to establish this (except in certain special cases which are dealt with separately in the proof of Theorem \ref{gorenst}) at the level of representation varieties. That is, we shall show in Section \ref{sect:4.3} that
\begin{enumerate}
	\item[\textup{(1)}] the coordinate ring of $\Rep_{g,n,k}$ from Section \ref{sect:3.2} is normal (Propositions \ref{6s1} and \ref{6s2}), and
	\item[\textup{(2)}] the coordinate ring $H_{g,n}$, which is defined below and whose ring of $\SL_2(\C)$-invariants recovers $R_{g,n,k}^\sigma/(t)$, is reduced (Proposition \ref{6s3}).
\end{enumerate} (The fact that $H_{g,n}$ has dimension $6g+2n-3$ (whence $R_{g,n,k}^\sigma/(t)$ has dimension $6g+2n-6$) is easily seen by arguing as in the beginning of the proof of Theorem \ref{gorenst}.) The advantage of working with representation varieties and $H_{g,n}$ is that their singular loci can be partly understood using conditions on matrix variables, which are obtained in Section \ref{sect:4.1} in the form of Lemma \ref{crit}. The main work in Section \ref{sect:4.3} is therefore to bound from above the dimensions of spaces of matrices cut out by such conditions. To this end, various preliminary dimension bounds and computations are carried out in Section \ref{sect:4.2} and the Appendix.

Let $m\geq1$ be an integer. Recall from Section \ref{sect:2.5} that we have an identification
$$\C[\M^m]^{\deg}=\C[(X_1)_{ij},\dots,(X_m)_{ij},T]=\C[\M^m\times\A^1].$$
As before, for $i=1,\dots,m$, let $E_i=\det(X_i)-T^2$ be the element of degree $2$ in $\C[\M^m]^{\deg}$. Recall that $\C[\Rep_m]^\sigma=\C[\M^m]^{\deg}/(E_1,\dots,E_m)$, where $\Rep_m$ is the representation variety of the free group on $m$ generators $\sigma$.

Let $g\geq0$ and $n\geq1$ be integers with $2g+n-1=m$. Fix $k=(k_1,\dots,k_n)\in\A^n(\C)$. Let $F_1,\dots,F_{n-1}$ and $F_n$ respectively be the homogeneous elements in $\C[\M^m]^{\deg}$ of degrees $1,\dots,1$ and $4g+n-1$ given by
\begin{align*}
F_1&=\tr(X_{2g+1})-k_1T,\quad\dots,\quad F_{n-1}=\tr(X_{2g+n-1})-k_{n-1}T,\quad\text{and}\\
F_n&=\tr(\langle X_1,X_2\rangle\dotsm\langle X_{2g-1},X_{2g}\rangle X_{2g+1}\dotsm X_{2g+n-1})-k_nT^{4g+n-1}.
\end{align*}
Let us define $\Rep_m'=\Spec\C[\Rep_m]^\sigma$ and $\Rep_{g,n,k}'=\Spec\C[\Rep_m']/(F_1,\dots,F_n)$. The singular locus of $\Rep_{g,n,k}'$ is the intersection of $\Rep_{g,n,k}'$ with the locus of critical points $\Crit(E,F)$ of the morphism $(E_1,\dots,E_m,F_1,\dots,F_n):\M^m\times\A^1\to\A^{m+n}$. Consider now the $\G_m$-equivariant morphism
$$\pi:\Rep_{g,n,k}'\to\A^1$$
of affine schemes induced by the the morphism of graded rings $\C[t]\to\C[\Rep_{g,n,k}']$ sending $t\mapsto T$. Note that $\pi^{-1}(1)\simeq\Rep_{g,n,k}$ is the relative representation variety of the compact surface $\Sigma_{g,n}$ of genus $g$ with $n$ boundary components corresponding to $k$, as introduced in Section \ref{sect:3.2}. Let us define $H_{g,n}=\pi^{-1}(0)$.

\subsection{Critical points} \label{sect:4.1}
Fix integers $g\geq0$ and $n\geq1$ such that $m=2g+n-1\geq1$. Fix a complex number $t\in\C$. For each $s\in\C$ such that $s^2=t$, the fiber of the morphism (considered in the proof of Lemma \ref{fl})
$$\pi:\Rep_m'\to\A^1$$
above $s$ is the scheme $\M_t^m$. By definition, a point $a\in\M_t^m(\C)$ is represented by an $m$-tuple of matrices $a=(a_1,\dots,a_m)$, each $a_i$ having determinant $t$. Given such a point, let us introduce the following supplementary notation.
\begin{enumerate}
	\item[\textup{(1)}] Let $b_i=\langle a_{2i-1},a_{2i}\rangle$ for each $i\in\{1,\dots,g\}$.
	\item[\textup{(2)}] Let $b_{g+j}=a_{2g+j}$ for each $j\in\{1,\dots,n-1\}$.
	\item[\textup{(3)}] In generality, for a cyclic interval $I=(i_0,i_0+1,\dots,i_0+k)$ in $\Cfrak_{g+n-1}$ (cf.~Section \ref{sect:1.5}), let $b_I=b_{i_0}\dotsm b_{i_0+k}$ where the indices are considered modulo $g+n-1$. Also, let $b^I=b_{I^c}$ when $I^c$ is defined. Let us write $b^i=b^{(i)}$ for simplicity.
\end{enumerate}
For example, we have
$$b_{(1,\dots,g+n-1)}=b_1\dotsm b_{g+n-1}=\langle a_1,a_2\rangle\dotsm \langle a_{2g-1},a_{2g}\rangle a_{2g+1}\dotsm a_{2g+n-1}.$$
We shall describe, in terms of these parameters, the critical locus of the morphism $F=(F_1,\dots,F_n):\Rep_m'\to\A^n$ defined in Section \ref{sect:4.1} restricted to $\M_t^m\subset\Rep_m'$. We begin with a lemma on tangent spaces.

\begin{lemma}
\label{tanlem}
Let $a\in\M(\C)$ be given.
\begin{enumerate}
	\item[\textup{(1)}] $T_{a}\M_{\det a}=\{v\in\M(\C):\tr(va^*)=0\}$. We have $\mathfrak{sl}_2(\C)\cdot a\subseteq T_{a}\M_{\det a}$ and $a\cdot\mathfrak{sl}_2(\C)\subseteq T_a\M_{\det a}$. The containments are equalities if $\det(a)\neq0$.
	\item[\textup{(2)}] $T_{a}\M_{\det a,\tr a}=\{v\in\M(\C):\tr(v)=\tr(va^*)=0\}$. We have a containment $[\mathfrak{sl}_2(\C),a]\subseteq T_{a}\M_{\det a,\tr a}$ which is an equality if $a$ is nonscalar.
\end{enumerate}
\end{lemma}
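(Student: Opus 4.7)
The plan is to identify each tangent space as the kernel of the differential of the defining polynomial equations, then use the identity $aa^* = a^*a = \det(a)\mathbf{1}$ to verify the stated containments and dimension counts.

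For part (1), I would start by computing the differential of $\det:\Mb\to\Ab^1$ at $a$. For $2\times 2$ matrices the expansion $\det(a+\epsilon v)=\det(a)+\epsilon\,\tr(a^*v)+\epsilon^2\det(v)$ gives $d(\det)_a(v)=\tr(a^*v)=\tr(va^*)$, so $T_a\Mb_{\det a}$ is exactly the stated hyperplane. For the containment $\mathfrak{sl}_2(\Cb)\cdot a\subseteq T_a\Mb_{\det a}$, given $x\in\mathfrak{sl}_2(\Cb)$ I would compute
$$\tr((xa)a^*)=\tr(x\,aa^*)=\det(a)\tr(x)=0,$$
using $aa^*=\det(a)\mathbf{1}$ and $\tr(x)=0$; the right-multiplication case is symmetric. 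When $\det(a)\neq0$, the map $x\mapsto xa$ is a linear isomorphism of $\Mb(\Cb)$, so $\mathfrak{sl}_2(\Cb)\cdot a$ has dimension $3$, matching the codimension of the smooth hypersurface $\Mb_{\det a}$ at $a$, forcing equality.

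For part (2), $\Mb_{\det a,\tr a}$ is cut out by $\det$ and $\tr$, so its tangent space at $a$ is the intersection of the two hyperplanes $\tr(v)=0$ and $\tr(va^*)=0$. For the containment $[\mathfrak{sl}_2(\Cb),a]\subseteq T_a\Mb_{\det a,\tr a}$, given $x\in\mathfrak{sl}_2(\Cb)$ I would note that $\tr([x,a])=0$ and use the identity $aa^*=a^*a=\det(a)\mathbf{1}$ to get
$$\tr([x,a]a^*)=\tr(xaa^*)-\tr(axa^*)=\det(a)\tr(x)-\tr(x a^*a)=0.$$
Now assume $a$ is nonscalar. The defining functionals $v\mapsto\tr(v)$ and $v\mapsto\tr(va^*)$ correspond to $\mathbf{1}$ and $a^*$; since $a^*=\tr(a)\mathbf{1}-a$, these are linearly independent exactly when $a$ is nonscalar, giving $\dim T_a\Mb_{\det a,\tr a}=2$. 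On the other side, the kernel of $x\mapsto[x,a]$ on $\mathfrak{sl}_2(\Cb)$ is the centralizer of $a$ in $\mathfrak{sl}_2(\Cb)$; since $a$ is nonscalar, its centralizer in $\Mb(\Cb)$ is the $2$-dimensional subspace $\Cb\mathbf{1}\oplus\Cb a$, intersecting $\mathfrak{sl}_2(\Cb)$ in the line $\Cb(a-\tfrac{1}{2}\tr(a)\mathbf{1})$, so $\dim[\mathfrak{sl}_2(\Cb),a]=3-1=2$. The containment is therefore an equality.

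The calculations are all essentially linear algebra with one key algebraic input, the Cayley--Hamilton-type identity $aa^*=\det(a)\mathbf{1}$; the only mildly subtle point is tracking when the two codimension-one conditions become dependent, which is precisely the scalar case and explains the nonscalar hypothesis in the equality statement of (2).
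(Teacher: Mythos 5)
Your proposal is correct and follows essentially the same route as the paper: compute the differentials of $\det$ and $\tr$ via the expansion in $\Cb[\varepsilon]/(\varepsilon^2)$, verify the containments using $aa^*=a^*a=\det(a)\mathbf{1}$ and $\tr(x)=0$, and conclude equality by a dimension count (with your centralizer computation supplying the detail the paper only asserts, namely $\dim[\mathfrak{sl}_2(\Cb),a]=2$ for $a$ nonscalar). One trivial wording slip: in part (1) you should say the dimension $3$ of $\mathfrak{sl}_2(\Cb)\cdot a$ matches the dimension of the tangent space $T_a\Mb_{\det a}$ (the hypersurface has codimension $1$), but the argument itself is fine.
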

\begin{proof} (1) For any $a,v\in\M(\C)$, we have $\det(a+\varepsilon v)=\det(a)+\varepsilon\tr(va^*)$ within $\C[\varepsilon]/(\varepsilon^2)$, from which the first statement follows. In particular, $\dim T_{a}\M_{\det a}=3$ provided that $a\neq0$. Since $\tr(uaa^*)=\tr(aua^*)=\det(a)\tr(u)=0$ for every $u\in\mathfrak{sl}_2(\C)$, we have $\mathfrak{sl}_2(\C)\cdot a\subseteq T_{a}\M_{\det a}$ and $a\cdot\mathfrak{sl}_2(\C)\subseteq T_a\M_{\det a}$. Lastly, if $\det(a)\neq0$ then $\dim\mathfrak{sl}_2(\C)\cdot a=\dim a\cdot\mathfrak{sl}_2(\C)=3$ from which the equalities follow by dimension reasons.

(2) For any $a,v\in\M(\C)$ we have $\tr(a+\varepsilon v)=\tr(a)+\varepsilon\tr(v)$ within $\C[\varepsilon]/(\varepsilon^2)$, from which the first statement follows. We have $\tr([v,a]a^*)=\tr(vaa^*-ava^*)=\det(a)\tr(v)-\det(a)\tr(v)=0$ and $\tr([v,a])=0$ for any $v\in\mathfrak{sl}_2(\C)$, and hence $[\mathfrak{sl}_2(\C),a]\subseteq T_{a,{\Char}}$. Finally, if $a$ is nonscalar, then we have $\dim_\C T_{a}\M_{\det a,\tr a}=2$ since $v\mapsto \tr(va^*)$ and $w\mapsto \tr(v)$ are linearly independent on $\M(\C)$. On the other hand, $\dim_\C[\mathfrak{sl}_2(\C),a]=2$ if $a$ is nonscalar, and hence $T_{a}\M_{\det a,\tr a}=[\mathfrak{sl}_2(\C),a]$ by dimension reasons.
\end{proof}

\begin{lemma}
\label{crit}
A critical point $(a_1,\dots,a_m)$ of $F:\M_t^m\to\A^n$ must satisfy at least one of the following two conditions.
\begin{enumerate}
	\item[\textup{(1)}] $a_{2g+j}$ is scalar for some $j\in\{1,\dots,n-1\}$.
	\item[\textup{(2)}] $[b_i,b^i]=0$ for every $i\in\{1,\dots,g+n-1\}$, and
\begin{align*}
&[a_{2i-1},a_{2i}a_{2i-1}^*a_{2i}^*b^i]=[a_{2i},a_{2i-1}^*a_{2i}^*b^ia_{2i-1}]\\
&\quad =[a_{2i-1}^*,a_{2i}^*b^ia_{2i-1}a_{2i}]=[a_{2i}^*,b^ia_{2i-1}a_{2i}a_{2i-1}^*]=0
\end{align*}
for every $i\in\{1,\dots,g\}$.
\end{enumerate}
\end{lemma}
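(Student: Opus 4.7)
The approach is Lagrange multipliers. By Lemma \ref{tanlem}, $T_a \Mb_t^m$ is the product of the hyperplanes $(a_i^*)^\perp \subset \Mb$ with respect to the trace pairing, so $a$ is critical iff some nonzero $(\lambda_1, \ldots, \lambda_n) \in \Cb^n$ makes $d(\sum_j \lambda_j F_j)$ vanish on each factor $(a_i^*)^\perp$. I would compute $dF_n$ component by component (the $dF_j$ with $j < n$ being immediate), split on whether $\lambda_n = 0$, and then convert the resulting Lagrange identities into the commutator conditions of (2).

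A one-step differentiation of $F_n = \tr(b_1 \cdots b_{g+n-1})$, using the linearity of the adjugate $v \mapsto v^*$ and the identity $\tr(Bv^*) = \tr(B^*v)$ (a consequence of $B + B^* = \tr(B)\mathbf{1}$), shows that the restriction of $dF_n$ to variations in $v_{2g+j}$ is represented by $b^{g+j}$, while its restriction to variations in $v_{2i-1}$ and $v_{2i}$ for $i \leq g$ is represented by $A + B^*$ and $C + D^*$ respectively. If $\lambda_n = 0$, the $dF_j$ for $j < n$ involve pairwise disjoint coordinates $v_{2g+j}$, and linear dependence forces $\lambda_j \mathbf{1} = \mu_j a_{2g+j}^*$ for some $j$, whence $a_{2g+j}$ is scalar: condition (1). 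Otherwise I normalize $\lambda_n = 1$ to obtain three families of necessary identities, $b^{g+j} = \lambda_j \mathbf{1} + \mu_j a_{2g+j}^*$, $A + B^* = \mu_i a_{2i-1}^*$, and $C + D^* = \mu_i' a_{2i}^*$. The first family immediately places $b^{g+j}$ in the span of $\mathbf{1}$ and $b_{g+j}$, giving $[b_{g+j}, b^{g+j}] = 0$ for $g+1 \leq i \leq g+n-1$.

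The main obstacle is extracting the four commutator identities plus $[b_i, b^i] = 0$ for $i \leq g$ from only two matrix equations per such $i$. The trick is to let $P_1, \ldots, P_5$ denote the five cyclic rotations of the word $b_i b^i = a_{2i-1} a_{2i} a_{2i-1}^* a_{2i}^* b^i$, so that the four commutators displayed in (2) are exactly $P_1 = P_2$, $P_2 = P_3$, $P_3 = P_4$, $P_4 = P_5$, and $[b_i, b^i] = 0$ is $P_1 = P_5$. Taking the adjugate of $A + B^* = \mu_i a_{2i-1}^*$ gives $A^* + B = \mu_i a_{2i-1}$; multiplying on the left (resp.\ right) by $a_{2i-1}^*$, using $(XY)^* = Y^* X^*$ and $a_{2i-1}^* a_{2i-1} = t \mathbf{1}$, converts this into the scalar-matrix identity $P_2^* + P_3 = \mu_i t \mathbf{1}$ (resp.\ $P_1^* + P_4 = \mu_i t \mathbf{1}$). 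The same manipulation applied to $C + D^* = \mu_i' a_{2i}^*$ delivers $P_3^* + P_4 = \mu_i' t \mathbf{1}$ and $P_2^* + P_5 = \mu_i' t \mathbf{1}$. Each relation of the form $P^* + Q = c \mathbf{1}$ with $P, Q$ cyclic rotations of a common product forces $P = Q$: substituting $P^* = \tr(P)\mathbf{1} - P$ gives $Q - P = (c - \tr(P))\mathbf{1}$, and taking traces, using $\tr(P) = \tr(Q)$, pins down $c$ and forces $P = Q$. Chaining the resulting equalities $P_2 = P_3$, $P_4 = P_1$, $P_3 = P_4$, $P_5 = P_2$ yields $P_1 = P_2 = P_3 = P_4 = P_5$, which is exactly condition (2).
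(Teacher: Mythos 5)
Your argument is correct, and it takes a genuinely different route from the paper. The paper proves the contrapositive on the primal side: assuming neither (1) nor (2), it exhibits explicit tangent vectors (e.g.\ $[u,b_{g+j}]$ in the boundary coordinates, and the combinations $(ua_{2i-1}-a_{2i-1}u,-a_{2i}u)$, $(a_{2i-1}u,0)$, $(0,a_{2i}u)$, $(-a_{2i-1}u,ua_{2i}-a_{2i}u)$ in a handle pair) whose images under $d_aF$ are $(0,\cdots,0,\tr(u[\,\cdot\,,\cdot\,]))$ with the bracket running over $[b_i,b^i]$ and the four displayed words, and then concludes surjectivity from nonscalarity of the $a_{2g+j}$. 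You instead dualize: criticality is the existence of a nonzero multiplier $\lambda$ with $\sum_j\lambda_j\,dF_j$ annihilating $T_a\Mb_t^m=\prod_i\{v:\tr(va_i^*)=0\}$, which by nondegeneracy of the trace pairing becomes the membership conditions $\lambda_j\mathbf1+\lambda_n b^{g+j}\in\Cb a_{2g+j}^*$ and $\lambda_n(A+B^*)\in\Cb a_{2i-1}^*$, $\lambda_n(C+D^*)\in\Cb a_{2i}^*$; the case $\lambda_n=0$ gives (1), and in the case $\lambda_n\neq0$ your adjugate manipulations ($(XY)^*=Y^*X^*$, $a^*a=t\mathbf1$, $P^*=\tr(P)\mathbf1-P$, equality of traces of cyclic rotations) force $P_1=\cdots=P_5$, which is exactly (2); I checked that your four relations $P_2^*+P_3$, $P_1^*+P_4$, $P_3^*+P_4$, $P_2^*+P_5=c\mathbf1$ are what the multiplier equations yield and that the chain closes. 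The differential computation ($dF_n$ represented by $b^{g+j}$, $A+B^*$, $C+D^*$) is the same in both proofs; what your version buys is a cleaner logical structure (no contraposition, and the full strength of (2) drops out of the multiplier identities rather than being assembled commutator by commutator), at the cost of the extra algebraic step $P^*+Q=c\mathbf1\Rightarrow P=Q$. Two small presentational points: you never define $A,B,C,D$ (they must be the four words $a_{2i}a_{2i-1}^*a_{2i}^*b^i$, $a_{2i}^*b^ia_{2i-1}a_{2i}$, $a_{2i-1}^*a_{2i}^*b^ia_{2i-1}$, $b^ia_{2i-1}a_{2i}a_{2i-1}^*$ appearing in (2), with which all your identities check); and when $t=0$ some $a_i$ may vanish, so the factors of $T_a\Mb_t^m$ need not be hyperplanes, but the description $\{v:\tr(va_i^*)=0\}$ and your duality step remain valid, and condition (2) is trivial at such an index, so nothing breaks.
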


\begin{proof}
Assume that $a\in\M_t^m(\C)$ satisfies neither of the conditions (1) and (2). We must show that $d_aF:T_a\M_t^m\to\C^n$ is surjective. Now, the first $n-1$ coordinates of $d_aF$ are given by
$$(u_1,\dots,u_m)\mapsto(\tr(u_{2g+1}),\dots,\tr(u_{2g+n-1}))$$
for each $u=(u_1,\dots,u_m)\in T_a\M_{t}^m$.
Since $a_{2g+j}$ is nonscalar for $j\in\{1,\dots,n-1\}$ by hypothesis, each of the functionals $u\mapsto\tr(u_{2g+j})$ is nonzero. Thus, to show surjectivity of $d_aF$ it remains to show that $(0,\dots,0,1)\in\Img d_aF$.

Let $u\in\mathfrak{sl}_2(\C)$ be given. We then have $[u,b_{g+j}]\in T_{b_{g+j}}\M_t$ for $j\in\{1,\dots,n-1\}$, and the composition $T_{b_{g+j}}\M_t\hookrightarrow T_a\M_t^m\to\C^n$ with $d_aF$ sends
$$[u,b_{g+j}]\mapsto(0,\dots,0,\tr([u,b_{g+j}]b^{g+j}))=(0,\dots,0,\tr(u[b_{g+j},b^{g+j}])).$$
Similarly, after a short computation (using the fact $u^*=-u$ for $u\in\mathfrak{sl}_2(\C)$) we also find that for $i\in\{1,\dots,g\}$ the composition $T_{(a_{2i-1},a_{2i})}\M_t^2\hookrightarrow T_a\M_t^m\to\C^n$ with $d_aF$ sends
\begin{align*}
(ua_{2i-1}-a_{2i-1}u,-a_{2i}u)&\mapsto(0,\dots,0,\tr(u[a_{2i-1},a_{2i}a_{2i-1}^*a_{2i}^*b^i])),\\
(a_{2i-1}u,0)&\mapsto(0,\dots,0,\tr(u[a_{2i},a_{2i-1}^*a_{2i}^*b^ia_{2i-1}])),\\
(0,a_{2i}u)&\mapsto(0,\dots,0,\tr(u[a_{2i-1}^*,a_{2i}^*b^ia_{2i-1}a_{2i}])),\\
(-a_{2i-1}u,ua_{2i}-a_{2i}u)&\mapsto(0,\dots,0,\tr(u[a_{2i}^*,b^ia_{2i-1}a_{2i}a_{2i-1}^*])).
\end{align*}
In particular, summing up the above tangent vectors in $T_a\M_t^m$ we find that
$$(ua_{2i-1}-a_{2i-1}u,ua_{2i}-a_{2i}u)\mapsto(0,\dots,0,\tr(u[b_i,b^i])).$$
Thus, if $a$ does not satisfy condition (2), at least one of the expressions above must be nonzero for some $u\in\mathfrak{sl}_2(\C)$. This implies that $(0,\dots,0,1)\in\Img d_aF$, and hence $d_aF$ is surjective, as desired.
\end{proof}

\subsection{Dimension estimates} \label{sect:4.2}
We collect a number of estimates for the dimensions of certain schemes parametrizing sequences of matrices with prescribed conditions. We will later use them to analyze the singularities of $\Rep_{g,n,k}$ and $H_{g,n}$. Throughout Section \ref{sect:4.3}, fix integers $g\geq0$ and $n\geq0$ such that $m=2g+n-1\geq1$.

Fix $k=(k_1,\dots,k_n)\in\A^n(\C)$, and let $\Phi:\SL_2^{2g}\times\prod_{i=1}^n\SL_{2,k_i}\to\SL_2$ be the morphism given by
$$(a_1,\dots,a_{2g+n})\mapsto\langle a_1,a_2\rangle\dotsm\langle a_{2g-1},a_{2g}\rangle a_{2g+1}\dotsm a_{2g+n}.$$

\begin{lemma}
\label{6d1}
$\Phi$ is flat above $\SL_2\setminus\{\pm\mathbf1\}$ with fibers of dimension $6g+2n-3$.
\end{lemma}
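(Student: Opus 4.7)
The plan is to establish flatness via the miracle flatness theorem \cite[Theorem 23.1]{matsumura}, reducing the task to a dimension computation on fibers. Concretely, I would show that the source $Y=\SL_2^{2g}\times\prod_{i=1}^n\SL_{2,k_i}$ is Cohen--Macaulay of pure dimension $6g+2n$ and that every non-empty fiber of $\Phi$ over $U=\SL_2\setminus\{\pm\mathbf1\}$ (which is smooth of dimension $3$) has dimension exactly $6g+2n-3$.

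The Cohen--Macaulay and pure-dimensionality assertions for $Y$ follow from noting that each factor $\SL_{2,k_i}\subset\Mb\simeq\Ab^4$ is cut out by the regular sequence $\det X_i-1,\ \tr X_i-k_i$, hence is a complete intersection, Cohen--Macaulay of pure dimension $2$; a direct inspection of the defining equation also confirms irreducibility (even when $k_i=\pm2$, where the only singular point is scalar). Products over $\Cb$ of Cohen--Macaulay irreducible varieties of pure dimension preserve these properties.

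For the fiber dimension estimate I would exploit the $\SL_2$-equivariance of $\Phi$ under simultaneous conjugation on $Y$ and conjugation on the target. Fix $c\in U$. A Jordan form argument gives $\dim O_c=2$, where $O_c\subset\SL_2$ is the conjugacy class of $c$, and $\SL_{2,\tr c}\setminus O_c\subseteq\{\pm\mathbf1\}$. The composition $\tr\circ\Phi:Y\to\Ab^1$ is non-constant---for instance, varying only $a_{2g+n}$ within $\SL_{2,k_n}$ changes the trace of the product---so by irreducibility of $Y$, each of its fibers has pure dimension $6g+2n-1$. Equivariance identifies all fibers of $\Phi$ over points of the homogeneous space $O_c$, yielding
\[
\dim\Phi^{-1}(O_c)=\dim O_c+\dim\Phi^{-1}(c)=2+\dim\Phi^{-1}(c).
\]
Combined with the inclusion $\Phi^{-1}(O_c)\subseteq(\tr\circ\Phi)^{-1}(\tr c)$ this forces $\dim\Phi^{-1}(c)\leq 6g+2n-3$. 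The reverse inequality holds whenever $\Phi^{-1}(c)$ is non-empty, from the standard fiber dimension theorem applied to a morphism between irreducible varieties.

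The main obstacle I anticipate is justifying the dimension identity for $\Phi^{-1}(O_c)$ rigorously: the transitive conjugation action of $\SL_2$ on $O_c$ has $1$-dimensional stabilizer $Z_{\SL_2}(c)$, which presents $\Phi^{-1}(O_c)$ as a $Z_{\SL_2}(c)$-quotient of $\SL_2\times\Phi^{-1}(c)$ over $O_c$, and translating this into the displayed dimension equality requires some care with the potential non-reducedness of $\Phi^{-1}(c)$. The hypothesis $c\neq\pm\mathbf1$ is essential: it is precisely the condition under which $\dim O_c=2$ rather than $0$, so that one gains the needed two units of dimension when quotienting by the conjugation orbit and arrives at the expected codimension $3$ for $\Phi^{-1}(c)$.
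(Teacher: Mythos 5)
Your strategy is the same as the paper's: miracle flatness with the complete-intersection (hence Cohen--Macaulay) source and regular target, purity of the fibers of $\tr\circ\Phi$, and $\SL_2$-conjugation equivariance to compare fibers within a trace level. Your upper-bound/lower-bound bookkeeping (orbit fibration $\dim\Phi^{-1}(O_c)=2+\dim\Phi^{-1}(c)$ inside the pure $(6g+2n-1)$-dimensional trace fiber, plus Chevalley's fiber-dimension theorem for the reverse inequality) is a mild repackaging of the paper's phrasing that the fibers over $\SL_{2,l}\setminus\{\pm\mathbf1\}$ are mutually isomorphic and hence of dimension $6g+2n-3$; the worry you raise about non-reducedness is harmless, since all of this is purely topological dimension theory.

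The genuine gap is non-emptiness: you only conclude that fibers over $\SL_2\setminus\{\pm\mathbf1\}$ are \emph{either empty or} of pure dimension $6g+2n-3$ (your reverse inequality is explicitly conditional on $\Phi^{-1}(c)\neq\emptyset$), whereas the lemma asserts the fibers have dimension $6g+2n-3$, and it is this equality (hence surjectivity of $\Phi$ onto $\SL_2\setminus\{\pm\mathbf1\}$) that the paper's proof secures in its very first sentence by invoking the appendix result (Lemma \ref{a3}): surjectivity of $(a_1,a_2)\mapsto a_1a_2$ on $\SL_{2,k_1}\times\SL_{2,k_2}$ away from $\pm\mathbf1$, of the triple product map, and of the commutator map $\langle-,-\rangle$. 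This is not something you get for free from what you proved: flatness over $\SL_2\setminus\{\pm\mathbf1\}$ makes the image open there, and non-constancy of $\tr\circ\Phi$ plus invariance makes it dense, but an open dense conjugation-invariant subset can still miss the non-closed unipotent classes (which are closed inside $\SL_2\setminus\{\pm\mathbf1\}$); that is precisely the delicate case $k_1=\pm k_2$ in Lemma \ref{a3}(1), handled there by an explicit upper-triangular construction. So your argument needs to be supplemented by this surjectivity statement (or an equivalent), both to justify the lemma as stated and because later dimension counts quote the exact value of the fiber dimension.
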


\begin{proof}
First, $\Phi$ is dominant by Lemma \ref{a3}. Note that $\SL_2^{2g}\times\prod_{i=1}^n\SL_{2,k_i}$ is a complete intersection scheme of dimension $6g+2n$, and in particular Cohen-Macaulay. The target $\SL_2$ of $\Phi$ is regular of dimension $3$. By the miracle flatness theorem, to prove that $\Phi$ is flat it suffices to show that the fibers of $\Phi$ are equidimensional, of dimension $6g+2n-3$. Consider the composition
$\tr\circ\Phi:\SL_2^{2g}\times\prod_{i=1}^n\SL_{2,k_i}\to\A^1$. Since $\tr\circ\Phi$ is dominant, it is flat. In particular, the fibers of $\tr\circ\Phi$ are all of pure dimension $6g+2n-1$.

Next, note that $\SL_2$ acts by conjugation on the domain and target of $\Phi$, and $\Phi$ is equivariant with respect to this action. Hence, given any $l\in\C$ the fibers of the restriction $\Phi:\Phi^{-1}(\SL_{2,l})\to\SL_{2,l}$, away from the fibers over $\pm\mathbf1$, are isomorphic hence equidimensional of dimension $6g+2n-3$.
\end{proof}

\begin{lemma}
\label{6d2}
If $2g+n-1\geq2$, then $\Phi^{-1}(\pm\mathbf1)$ has dimension at most $6g+2n-3$.
\end{lemma}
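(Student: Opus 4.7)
The plan is to project away a single coordinate and reduce the dimension bound to a non-constancy statement for a trace function. Assume first that $n\geq1$, and write
$$C:=\langle a_1,a_2\rangle\cdots\langle a_{2g-1},a_{2g}\rangle,\qquad B:=a_{2g+2}\cdots a_{2g+n}.$$
Consider the forgetful morphism
$$p\colon \Phi^{-1}(\pm\mathbf{1})\longrightarrow M:=\SL_2^{2g}\times\prod_{i=2}^n\SL_{2,k_i},$$
so that $\dim M=6g+2n-2$. The equation $\Phi(a)=C\,a_{2g+1}\,B=\pm\mathbf{1}$ forces $a_{2g+1}=\pm C^{-1}B^{-1}$, so each fiber of $p$ has at most two points (one per choice of sign) and hence $\dim\Phi^{-1}(\pm\mathbf{1})\leq\dim\img p$.

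The image of $p$ is contained in the closed subscheme $M_0\subset M$ defined by $\tr(CB)\in\{k_1,-k_1\}$; this encodes precisely the condition that the determined value $\pm C^{-1}B^{-1}$ actually lies in $\SL_{2,k_1}$, using $\tr(x^{-1})=\tr(x)$ for $x\in\SL_2$ together with cyclicity of trace. The argument then reduces to showing that $f:=\tr(CB)$ is non-constant on the irreducible variety $M$; this forces $\dim M_0\leq\dim M-1=6g+2n-3$ and completes the proof in this case.

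To establish non-constancy I split on $g$. If $g\geq 1$, the classical surjectivity of the commutator map $\langle\cdot,\cdot\rangle\colon\SL_2^2\twoheadrightarrow\SL_2$ lets $W:=CB$ sweep out all of $\SL_2$ as $(a_1,a_2)$ varies with the remaining entries fixed, so $\tr W$ is non-constant. If $g=0$, the hypothesis forces $n\geq 3$, and $W=a_2\cdots a_n$ is a product of $n-1\geq 2$ matrices; Lemma \ref{a3} yields dominance of the multiplication map $\prod_{i=2}^n\SL_{2,k_i}\to\SL_2$, again giving non-constancy of $\tr W$. In the residual case $n=0$ (where $g\geq2$), I would split off a single commutator $\Phi=\langle a_1,a_2\rangle\cdot C'$ and bound the total dimension by combining the commutator map's fiber dimensions ($3$ generically, $4$ over $\pm\mathbf{1}$) with an inductive estimate on $\{C'=\pm\mathbf{1}\}\subset\SL_2^{2g-2}$, the commuting variety in $\SL_2^2$ supplying the base case.

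The hardest step is verifying non-constancy of $f$ on $M$; invoking Lemma \ref{a3} sidesteps the delicate dominance issues that arise when some $k_i=\pm2$ and $\SL_{2,k_i}$ contains non-semisimple matrices.
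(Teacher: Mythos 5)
Your proposal is correct, and in the main case $n\geq1$ it takes a genuinely different route from the paper. The paper argues by induction on $(g,n)$: it projects the fiber onto the last factor (onto the last two copies of $\SL_2$ when $n=0$), stratifies according to whether the projected entry equals $\pm\mathbf1$, and bounds the strata using the flatness statement of Lemma \ref{6d1} on the generic stratum and the inductive hypothesis together with Lemma \ref{a4} (plus the two- and three-matrix base cases) on the degenerate one. You instead eliminate $a_{2g+1}$ outright: on the fiber it is forced to equal $\pm C^{-1}B^{-1}$, so the projection to $M=\SL_2^{2g}\times\prod_{i=2}^n\SL_{2,k_i}$ is quasi-finite, and the constraint $\tr(CB)\in\{k_1,-k_1\}$ (correctly derived from $\tr(x^{-1})=\tr(x)$ and cyclicity) supplies one further codimension because $\tr(CB)$ is non-constant on the irreducible $M$, via the commutator surjectivity of Lemma \ref{a3}.(3) when $g\geq1$ and the dominance of Lemma \ref{a3}.(1) when $g=0$, $n\geq3$. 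This gives the codimension-$3$ bound with no induction and no appeal to Lemma \ref{6d1} for $n\geq1$, and it cleanly isolates where the three codimensions come from (two from determining $a_{2g+1}$, one from its trace condition). Two points you should make explicit in a final write-up: irreducibility of $M$ requires knowing $\SL_{2,k}$ is irreducible also for $k=\pm2$ (there it is the quadric cone $yz+(x\mp1)^2=0$, so this is fine), and your residual case $n=0$ is only sketched — fleshing it out, with Lemma \ref{a4} handling the $g=2$ base case and Lemma \ref{6d1} the generic commutator fibers, essentially reproduces the paper's inductive argument, so the genuine novelty is confined to $n\geq1$. What each approach buys: yours is shorter and avoids the inductive bookkeeping for the boundary factors; the paper's uniform induction treats $n=0$ and $n\geq1$ by one mechanism and only reuses lemmas already needed elsewhere in Section 4.
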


\begin{proof}
Let us denote by $F_+$ (reps.~$F_-$) the fiber of $\Phi$ above $\mathbf1$ (resp.~$-\mathbf1$). We shall proceed by induction on $(g,n)$. Consider first the case where $n=0$. We must then have $g\geq2$. Consider the projection
$$\pi:F_{\pm}\subset\SL_2^{2g}\to\SL_2^2$$
onto the last two copies of $\SL_2$. Given $(b_1,b_2)\in\SL_2^2(\C)$, the dimension of the fiber $\pi^{-1}(b_1,b_2)$ is estimated as follows.
\begin{enumerate}
	\item[\textup{(1)}] If $\langle b_1,b_2\rangle\neq\pm\mathbf1$, then $\dim\pi^{-1}(b_1,b_2)=6(g-1)-3$ by Lemma \ref{6d1}.
	\item[\textup{(2)}] If $\langle b_1,b_2\rangle=\pm\mathbf1$, then $\dim\pi^{-1}(b_1,b_2)\leq 6(g-1)-2$.
\end{enumerate}
Here, part (2) follows from the inductive hypothesis and the fact that, when $g=2$ so that $g-1=1$, the fiber of $\langle-,-\rangle:\SL_2^2\to\SL_2$ above $\pm\mathbf1$ has dimension at most $4$ by Lemma \ref{a4}. Since the locus of $(b_1,b_2)\in\SL_2^2$ defined by part (1) is $6$-dimensional, and the locus defined by condition (2) is at most $4$-dimensional by Lemma \ref{a4}, we see that $\dim F_\pm\leq 6g-3$ as desired. Consider next the case where $n\geq1$. If $g=0$, then we must have $n\geq3$. Consider the projection
$$\pi:F_{\pm}\subset\SL_2^{2g}\times\prod_{i=1}^n\SL_{2,k_i}\to\SL_{2,k_n}$$
onto the last factor of the product. Given $b\in\SL_{2,k_n}(\C)$, the dimension of the fiber $\pi^{-1}(b)$ is estimated as follows.
\begin{enumerate}
	\item[\textup{(1)}] If $b\neq\pm\mathbf1$, then $\dim\pi^{-1}(b)=6g+2(n-1)-3$ by the Lemma \ref{6d1}.
	\item[\textup{(2)}] If $b=\pm\mathbf1$, then $\dim\pi^{-1}(b)\leq 6g+2(n-1)- 2$.
\end{enumerate}
Here, part (2) follows from the inductive hypothesis, from Lemma \ref{a4} in the case $(g,n)=(1,1)$, and from the observation that the fiber of $\SL_{2,l}\times\SL_{2,\pm l}\to\SL_2$, $(a_1,a_2)\mapsto a_1a_2$ over $\pm\mathbf1$ has dimension $2$ in the case $(g,n)=(0,3)$. Since the locus of $b$ satisfying (1) has dimension $2$, we find that $\dim F_\pm\leq 6g+2n-3$ as desired.
\end{proof}

Let $\Psi:\M_0^{2g}\times\M_{0,0}^n\to\M_0$ be the morphism given by
$$(a_1,\dots,a_{2g+n})\mapsto\langle a_1,a_2\rangle\dotsm\langle a_{2g-1},a_{2g}\rangle a_{2g+1}\dotsm a_{2g+n}.$$

\begin{lemma}
\label{6d3}
$\Psi$ is flat above $\M_0\setminus\{0\}$ with fibers of dimension $6g+2n-3$.
\end{lemma}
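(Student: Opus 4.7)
The plan is to mimic the proof of Lemma \ref{6d1}, restricting to the regular open locus $\Mb_0\setminus\{0\}$ of the target since $\Mb_0$ is singular at the origin. The source $\Mb_0^{2g}\times\Mb_{0,0}^n$ is a complete intersection inside $\Mb^m\simeq\Ab^{4m}$ cut out by the $2g$ equations $\det(X_i)=0$ for $i\leq 2g$ and the $2n$ equations $\det(X_i)=\tr(X_i)=0$ for $i>2g$, and has the expected dimension $6g+2n$; it is therefore Cohen-Macaulay and irreducible (as a product of the irreducible quadric cones $\Mb_0$ and $\Mb_{0,0}$). Since $\Mb_0\setminus\{0\}$ is regular of dimension three, the miracle flatness theorem reduces the proof to showing that $\Psi$ is dominant with equidimensional fibers of dimension $6g+2n-3$ above this locus.

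Dominance of $\Psi$ is supplied by Lemma \ref{a3}. As in the proof of Lemma \ref{6d1}, $\tr\circ\Psi:\Mb_0^{2g}\times\Mb_{0,0}^n\to\Ab^1$ is then dominant and hence flat (the source being Cohen-Macaulay and the target a smooth curve), so its fibers $\Psi^{-1}(\Mb_{0,l})=(\tr\circ\Psi)^{-1}(l)$ are equidimensional of dimension $6g+2n-1$. The group $\SL_2$ acts by simultaneous conjugation on source and target and $\Psi$ is equivariant, so fibers of $\Psi$ over points of a single $\SL_2$-orbit are mutually isomorphic. For $l\neq0$ every element of $\Mb_{0,l}$ is conjugate to $\diag(l,0)$, whose stabilizer is the diagonal torus, so $\Mb_{0,l}$ is a single $\SL_2$-orbit of dimension two and each fiber of $\Psi$ over a point of $\Mb_{0,l}$ has dimension $6g+2n-3$.

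The remaining case, and the main obstacle relative to Lemma \ref{6d1}, concerns the nonzero nilpotent locus $\Mb_{0,0}\setminus\{0\}$, which is a single open two-dimensional $\SL_2$-orbit in the irreducible surface $\Mb_{0,0}$. Each irreducible component of $\Psi^{-1}(\Mb_{0,0})$ is $\SL_2$-stable by connectedness of $\SL_2$, and its image in $\Mb_{0,0}$ is an irreducible $\SL_2$-invariant subset, hence either equal to $\{0\}$ or dense in $\Mb_{0,0}$. By pure-dimensionality of $\Psi^{-1}(\Mb_{0,0})=(\tr\circ\Psi)^{-1}(0)$ of dimension $6g+2n-1$, on any dominating component the fiber over a nonzero nilpotent matrix has dimension $6g+2n-3$ by the same orbit-isomorphism argument as for $l\neq 0$. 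The crucial point is therefore to verify that such a dominating component exists (equivalently, that $\Psi$ hits some nonzero nilpotent matrix), which I would establish either by an explicit construction of such an $m$-tuple or by appealing to upper semi-continuity of fiber dimension for the dominant morphism $\Psi$ between irreducible schemes, which forces every non-empty fiber to have dimension at least the generic fiber dimension $6g+2n-3$. Once this is in place, miracle flatness over $\Mb_0\setminus\{0\}$ completes the argument.
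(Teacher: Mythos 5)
Your strategy is the same as the paper's: the source $\Mb_0^{2g}\times\Mb_{0,0}^n$ is a complete intersection of dimension $6g+2n$, hence Cohen--Macaulay; the target is regular away from the origin; miracle flatness reduces everything to equidimensionality of fibers; flatness of $\tr\circ\Psi$ makes each $\Psi^{-1}(\Mb_{0,l})$ pure of dimension $6g+2n-1$; and $\SL_2$-equivariance together with transitivity on the two-dimensional orbits $\Mb_{0,l}$ ($l\neq0$) and $\Mb_{0,0}\setminus\{0\}$ forces the fibers there to have dimension $6g+2n-3$. Your component-by-component treatment of the nilpotent stratum is a reasonable way of making explicit what the paper leaves implicit.

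There are, however, two concrete problems. First, dominance of $\Psi$ cannot be ``supplied by Lemma \ref{a3}'': that lemma concerns determinant-one matrices ($\SL_{2,k_i}$ and $\SL_2$), whereas every factor of $\Psi$ has determinant zero, and the $\SL_2$ arguments (for instance, inserting the factor $\mathbf1$, which does not lie in $\Mb_0$) do not transfer to this setting. The input you actually need is Lemma \ref{a7}, the determinant-zero analogue, and that is exactly what the paper cites. Second, your fallback for verifying that $\Psi$ hits a nonzero nilpotent matrix --- upper semi-continuity of fiber dimension --- cannot do the job: Chevalley's bound only constrains fibers that are already known to be nonempty, so it says nothing about whether a dominating component over $\Mb_{0,0}$ exists. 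The workable route is the explicit construction you mention in passing, and again this is precisely what Lemma \ref{a7} provides (its parts (2) and (3) exhibit preimages of nonzero nilpotents, e.g.\ $\left[\begin{smallmatrix}1 & 1\\ 0 & 0\end{smallmatrix}\right]\left[\begin{smallmatrix}0 & 1\\ 0 & 0\end{smallmatrix}\right]=\left[\begin{smallmatrix}0 & 1\\ 0 & 0\end{smallmatrix}\right]$ and the displayed computation for $\langle-,-\rangle$). A minor remark: for miracle flatness itself empty fibers are harmless, since the criterion is checked at points of the source, so nonemptiness is only needed to make the literal assertion ``fibers of dimension $6g+2n-3$'' true; replacing Lemma \ref{a3} by Lemma \ref{a7} removes both issues and brings your argument in line with the paper's.
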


\begin{proof}
First, $\Psi$ is dominant by Lemma \ref{a7}. Note that $\M_{0}^{2g}\times\M_{0,0}^n$ is a complete intersection scheme of dimension $6g+2n$, and in particular Cohen-Macaulay. The target $\M_0$ of $\Psi$ is regular away from $\{0\}$, of dimension $3$. By the miracle flatness theorem, to prove that $\Psi$ is flat over $\M_0\setminus\{0\}$ it suffices to show that the fibers of $\Psi$ are equidimensional, of dimension $6g+2n-3$. Consider the composition
$\tr\circ\Psi_{g,n}:\M_{0}^{2g}\times\M_{0,0}^n\to\A^1$. Since $\tr\circ\Psi$ is dominant, it is flat. In particular, the fibers of $\tr\circ\Psi$ are all of pure dimension $6g+2n-1$.

Next, note that $\SL_2$ acts by conjugation on the domain and target of $\Psi$, and $\Psi$ is equivariant with respect to this action. Hence, given any $l\in\C$ the fibers of the restriction $\Psi:\Psi^{-1}(\M_{0,l})\to\M_{0,l}$ are isomorphic, hence equidimensional of dimension $6g+2n-3$, away from the fiber over $0$.
\end{proof}

\begin{lemma}
\label{6d4}
$\Psi^{-1}(0)$ has dimension at most $6g+2n-1$.
\end{lemma}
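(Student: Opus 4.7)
The plan is to give a short proof resting on irreducibility of the domain. The bound $6g+2n-1$ that we seek is weaker than the generic fiber dimension $6g+2n-3$ established in Lemma \ref{6d3}, so it amounts only to the assertion that $\Psi^{-1}(0)$ is a proper closed subscheme of the domain $D:=\Mb_0^{2g}\times\Mb_{0,0}^n$. Unlike Lemma \ref{6d2}, no induction on $(g,n)$ will be required.

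First I would verify that $D$ is irreducible of dimension $6g+2n$. The scheme $\Mb_0\subset\Mb\simeq\Ab^4$ is cut out by the single irreducible polynomial $\det(x)=x_{11}x_{22}-x_{12}x_{21}$, hence is an irreducible affine hypersurface of dimension $3$. The scheme $\Mb_{0,0}$ is the nilpotent cone of $\mathfrak{sl}_2$; eliminating $x_{22}=-x_{11}$ using the trace equation, the remaining equation $x_{11}^2+x_{12}x_{21}=0$ is an irreducible quadric in three variables, so $\Mb_{0,0}$ is an irreducible affine surface. A finite product of irreducible affine $\Cb$-varieties is irreducible, giving the claim.

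Next I would invoke dominance of $\Psi$, already established (via Lemma \ref{a7}) and used in the proof of Lemma \ref{6d3}. This ensures that the image of $\Psi$ is not contained in $\{0\}\subset\Mb_0$, so $\Psi^{-1}(0)$ is a proper closed subscheme of $D$. Since $D$ is irreducible of dimension $6g+2n$, every proper closed subscheme has dimension at most $6g+2n-1$, yielding the desired bound.

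There is no substantial obstacle in this argument: the only nontrivial ingredient is the irreducibility of $\Mb_{0,0}$, which is classical (it is the affine quadric cone in $\Ab^3$), and dominance has already been proved in the paper. The lemma is essentially immediate from the combination of these two facts.
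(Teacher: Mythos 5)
Your argument is correct and is essentially the paper's own proof: both reduce the bound to the fact that the domain $\Mb_0^{2g}\times\Mb_{0,0}^n$ is irreducible of dimension $6g+2n$ while $\Psi$ is dominant by Lemma \ref{a7}, so $\Psi^{-1}(0)$ is a proper closed subscheme and hence has dimension at most $6g+2n-1$. The extra verification of irreducibility of $\Mb_0$ and $\Mb_{0,0}$ is harmless but not a different route.
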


\begin{proof}
Since the domain $\M_0^{2g}\times\M_{0,0}^n$ of $\Psi$ has dimension $6g+2n$ and is integral, if $\dim\Psi^{-1}(0)\geq 6g+2n$ then $\Psi$ must be identically zero, contradicting the fact that $\Psi$ is dominant by Lemma \ref{a7}.
\end{proof}

\subsection{Singularities} \label{sect:4.3}
Let $\Sigma_{g,n}$ be a compact oriented surface of genus $g\geq0$ with $n\geq1$ boundary components and $\chi(\Sigma_{g,n})=2-2g-n<0$. The fundamental group of $\Sigma_{g,n}$ is free of rank $m=2g+n-1\geq2$. Let $k=(k_1,\dots,k_n)\in\A^n(\C)$, and let $F_1,\dots,F_n\in\C[\M^m]^{\deg}$ be as in Section \ref{sect:4.1}.

\begin{proposition}
\label{6s1}
If $m\geq4$, the scheme $\Rep_{g,n,k}$ is normal.
\end{proposition}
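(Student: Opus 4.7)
My plan is to verify Serre's normality criterion ($R_1 + S_2$) for $\Rep_{g,n,k}$, combining the complete intersection structure with the critical locus analysis of Lemma~\ref{crit} and the dimension estimates of Section 4.3. Inside the smooth scheme $\SL_2^m$ (of dimension $3m = 6g+3n-3$), $\Rep_{g,n,k}$ is cut out by the $n$ trace conditions $\tr(a_{2g+j}) = k_j$ for $j \in \{1,\ldots,n-1\}$ together with $\tr(W) = k_n$, so every irreducible component has dimension at least $6g+2n-3$. Identifying $\Rep_{g,n,k}$ with the fiber $\Phi^{-1}(\mathbf{1})$ of the map $\Phi$ of Lemma~\ref{6d1} (using $\tr(A^{-1}) = \tr(A)$ on $\SL_2$), Lemma~\ref{6d2} supplies a matching upper bound, valid since $m \geq 4 \geq 2$. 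Thus $\Rep_{g,n,k}$ is a complete intersection of pure dimension $6g+2n-3$; in particular it is Cohen--Macaulay, so $S_2$ holds automatically. It remains to establish $R_1$, i.e., that the singular locus has dimension at most $6g+2n-5$.

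Since $\SL_2^m$ is smooth, the singular locus of $\Rep_{g,n,k}$ coincides with the critical locus of $F = (F_1, \ldots, F_n)|_{\SL_2^m}$ intersected with $\Rep_{g,n,k}$, which by Lemma~\ref{crit} (specialized to $t = 1$) is contained in the union of two strata: \emph{(A)} the locus where some $a_{2g+j}$ with $j \in \{1,\ldots,n-1\}$ equals $\pm\mathbf{1}$, and \emph{(B)} the locus where $[b_i, b^i] = 0$ for all $i \in \{1,\ldots,g+n-1\}$ together with the quadruple commutator conditions of Lemma~\ref{crit} indexed by $i \in \{1,\ldots,g\}$. Stratum (A) is easy: fixing $a_{2g+j_0} = \epsilon\mathbf{1}$ forces $k_{j_0} = 2\epsilon$, and the remaining variables lie in a scheme isomorphic to the relative representation variety $\Rep_{g, n-1, k'}$ for a suitable $k'$. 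Since $m \geq 4$ gives $2g + (n-1) - 1 \geq 2$, Lemma~\ref{6d2} bounds its dimension by $6g + 2(n-1) - 3 = 6g+2n-5$, as needed.

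The main obstacle is stratum (B), which requires a more delicate case analysis. I would stratify further according to whether any $b^{i_0}$ is scalar. On the open stratum where every $b^i$ is non-scalar, the centralizer $Z(b^i) \subset \SL_2$ is one-dimensional for each $i$, and iterating the cyclic commutation relations $[b_i, b^i] = 0$ forces all of the $b_j$'s to lie in a common one-dimensional abelian subgroup of $\SL_2$; combined with the further quadruple commutator conditions on $(a_{2i-1}, a_{2i})$ for $i \leq g$, this should reduce the ambient dimension by at least $2$. On the closed strata where some $b^{i_0}$ is scalar, a cyclic subword of $W$ equals $\pm\mathbf{1}$, which supplies additional equations that effectively reduce the stratum to a relative representation variety on a surface of smaller Euler characteristic, again bounded by Lemma~\ref{6d2}. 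The subtle part is to verify that the centralizer analysis in the generic stratum is tight enough to drop two full dimensions, and that every scalar-type stratum also drops at least the required amount; this will use the lower bound $m \geq 4$ in an essential way (it is known to fail for smaller $m$, cf.\ Example~\ref{exrel}).
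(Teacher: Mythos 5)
Your overall framework (complete intersection of pure dimension $6g+2n-3$, hence Cohen--Macaulay and $S_2$; Serre's criterion reduces everything to showing the critical locus of $F$ inside $\Rep_{g,n,k}$ has codimension at least $2$; stratum (A) handled via Lemma \ref{6d2}) agrees with the paper. The gap is in stratum (B). The assertion that on the open stratum where every $b^i$ is nonscalar the relations $[b_i,b^i]=0$ force all the $b_j$ into a common one-dimensional abelian subgroup is false. The conditions only say that each $b_i$ commutes with the product of the others, and they hold \emph{automatically} whenever the full product $P=b_1\cdots b_{g+n-1}$ is $\pm\mathbf1$: in that case $b^i=b_i^{-1}P$ commutes with $b_i$ for every $i$, while the $b_j$ need not commute pairwise, and no $b^i$ is scalar as long as no $b_i$ is. This locus is nonempty inside $\Rep_{g,n,k}$ exactly when $k_n=\pm2$, a case the proposition must cover, and it is not captured by your ``closed strata where some $b^{i_0}$ is scalar'' (since $P$ is not a proper cyclic subword). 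Your centralizer argument does go through when $P$ is nonscalar (then each $b_i$ commutes with $P$ and hence lies in its centralizer), but precisely the degenerate cases $P=\pm\mathbf1$, and more generally the loci where relevant traces equal $\pm2$ so that Lemma \ref{a2} yields a one-parameter rather than finite family, are where the estimate is delicate; your sketch defers these (``should reduce the ambient dimension by at least $2$'', ``the subtle part is to verify\dots'') without supplying the counts.

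The paper's proof addresses exactly these loci by a finer stratification of $Z$: first the locus $Z_1$ where some $b_i$ (for a handle index) or some $b_{g+j}b_{g+j+1}$ equals $\pm\mathbf1$, bounded using Lemma \ref{a4} together with Lemma \ref{6d2}; then $Z_2$ where such a trace equals $\pm2$ without the product being $\pm\mathbf1$, bounded using Lemma \ref{6d1} and the one-dimensional case of Lemma \ref{a2}; and finally the generic stratum $Z_3$, where Lemma \ref{a2} gives only finitely many possibilities for $b^i$ once $b_i$ is fixed with $\tr(b_i)\neq\pm2$, and Lemma \ref{6d1} bounds the remaining variables. To repair your argument you would need to add an analysis of this type (in particular the $P=\pm\mathbf1$ and trace-$\pm2$ strata), rather than rely on a common abelian subgroup in the generic stratum.
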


\begin{proof}
Since $\Rep_{g,n,k}$ is a complete intersection of dimension $6g+2n-3$ in the regular scheme $\M^{m}$, it is Cohen-Macaulay. As a consequence of Serre's criterion for normality \cite[Theorem 23.8, p.183]{matsumura}, a Cohen-Macaulay ring is normal if and only if it is regular in codimension $1$. It thus suffices to demonstrate that $\Rep_{g,n,k}$ is regular in codimension $1$. Consider the restriction of $F=(F_1,\dots,F_n)$ above to $\SL_2^m\subset\M^m\times\A^1$. Since $\SL_2^m$ is regular, the singular locus of $\Rep_{g,n,k}=F^{-1}(0)$ is the intersection of $\Rep_{g,n,k}$ with the critical locus of $F$. It suffices to show that the locus in $\Rep_{g,n,k}$ defined by each of the conditions $(1)$ and $(2)$ of Lemma \ref{crit} has dimension at most $(6g+2n-3)-2$. We shall represent points of $\Rep_{g,n,k}$ by sequences $(a_1,\dots,a_{2g+n-1})$ of matrices, and adopt the same notations $b_I$ and $b^I$ for the various products of matrices as used in Lemma \ref{crit}. 

First, consider the locus $W\subset\Rep_{g,n,k}$ defined by condition (1) of Lemma \ref{crit}, which plays a role only when $n\geq2$. For each $i=1,\dots,n-1$ and $s\in\{\pm1\}$, the locus $\{b_{g+i}=s\mathbf1\}\subset\Rep_{g,n,k}$ is isomorphic to $\Rep_{g,n-1,k'}$ where $k'$ is the $(n-1)$-tuple obtained from $k$ by omitting $k_i$ and replacing $k_n$ by $sk_n$. By our assumption on $(g,n)$, we have $2g+(n-1)-1\geq2$, and we find by Lemma \ref{6d2}
$$\dim\{b_{g+i}=s\mathbf1\}=\dim\Rep_{g,n-1,k'}=6g+2(n-1)-3=(6g+2n-3)-2.$$
As $W=\bigcup_{i=1}^{n-1}\{b_{g+i}=\pm\mathbf1\}$, it has codimension $2$ in $\Rep_{g,n,k}$ as desired.

Next, let $Z\subset\Rep_{g,n,k}$ be the locus defined by condition (2) of Lemma \ref{crit}. We stratify $Z$ further into three subloci $Z_1$, $Z_2$, and $Z_3$, and estimate their dimensions as follows.
\begin{enumerate}
	\item[\textup{(1)}] Let $Z_1\subset Z$ be the sublocus consisting of $(a_1,\dots,a_m)$ such that $b_i=\pm\mathbf1$ for some $i\in\{1,\dots,g\}$ or $b_{g+j}b_{g+j+1}=\pm\mathbf1$ for some $j\in\{1,\dots,n-2\}$. We claim that $\dim Z_1\leq (6g+2n-3)-2$.
	
	Given $i\in\{1,\dots,g\}$ and $s\in\{\pm1\}$, consider the locus where $b_i=s\mathbf1$. By Lemma \ref{a4}, $(a_{2i-1},a_{2i})$ must vary over a locus of dimension at most $4$. For fixed $(a_{2i-1},a_{2i})$ the remaining matrices in the sequence $(a_1,\dots,a_m)$ vary over a locus isomorphic to $\Rep_{g-1,n,k'}$ for $k=(k_1,\dots,k_{n-1},sk_n)$, which has dimension $\dim\Rep_{g-1,n,k'}=6(g-1)+2n-3$ by Lemma \ref{6d2} since $2(g-1)+n-1\geq2$ by our assumption. Thus, the locus in $Z_1$ where where $b_i=s\mathbf1$ for $i\in\{1,\dots,g\}$ has dimension bounded by
	$$4+(6(g-1)+2n-3)=(6g+2n-3)-2.$$

Given $j\in\{1,\dots,n-2\}$ and $s\in\{\pm1\}$, consider the locus where $b_{g+j}b_{g+j+1}=s\mathbf1$. The pair $(b_{g+j},b_{g+j+1})$ then varies over a locus of dimension at most $2$. Tor fixed $(b_{g+j},b_{g+j+1})$ the remaining matrices in the sequence $(a_1,\dots,a_m)$ vary over a locus isomorphic to $\Rep_{g,n-2,k'}$ where $k'$ is obtained from $k$ by removing $k_{j}$ and $k_{j+1}$, and replacing $k_n$ by $sk_n$. We have $\dim\Rep_{g,n-2,k'}=6g+2(n-2)-3$ by Lemma \ref{6d2} since $2g+(n-2)-1\geq2$ by our assumption. Thus, the given locus has dimension bounded by
	$$2+(6g+2(n-2)-3)=(6g+2n-3)-2.$$
	
	This shows that $\dim Z_1\leq (6g+2n-3)-2$.

	\item[\textup{(2)}] Let $Z_2\subset Z\setminus Z_1$ be the sublocus consisting of $(a_1,\dots,a_m)$ with $\tr(b_i)=\pm2$ for some $i\in\{1,\dots,g\}$ or $\tr(b_{g+j}b_{g+j+1})=\pm2$ for some $j\in\{1,\dots,n-2\}$. We claim that $\dim Z_2\leq(6g+2n-3)-2$.
	
	Given $i\in\{1,\dots,g\}$, consider the locus $\tr(b_i)=\pm2$. By Lemma \ref{6d1}, the pair $(a_{2i-1},a_{2i})$ varies over a locus of dimension at most $5$. For fixed $(a_{2i-1},a_{2i})$, by Lemma \ref{a2} the product $b^i$ must vary over a locus of dimension at most $1$. We have two possibilities.
	\begin{enumerate}
		\item We have $b^i\in\{\pm\mathbf1\}$. We must have $2(g-1)+(n-1)-1\geq2$ since $Z_2$ lies in the complement of $Z_1$. By Lemma \ref{6d2}, for fixed $(a_{2i-1},a_{2i})$ the remaining matrices in $(a_1,\dots,a_m)$ vary over a locus of dimension at most $6(g-1)+2(n-1)-3$.
		\item We have $b^i\neq\pm\mathbf1$. By Lemma \ref{6d1}, for fixed $(a_{2i-1},a_{2i})$ and $b^i$ the remaining matrices in $(a_1,\dots,a_m)$ vary over a locus of dimension at most $6(g-1)-2(n-1)-3$.
	\end{enumerate}
	Thus, we see that the locus of $(a_1,\dots,a_m)$ in $Z$ satisfying $\tr(b_i)=\pm2$ for some $i\in\{1,\dots,g\}$ has dimension bounded by
	$$5+1+(6(g-1)+2(n-1)-3)=(6g+2n-3)-2.$$
	
	Given $j\in\{1,\dots,n-2\}$, consider the locus $\tr(b_{g+j}b_{g+j+1})=\pm2$. By Lemma \ref{6d1}, we see that the pair $(b_{g+j},b_{g+j+1})$ varies over a locus of dimension $3$. For fixed $(b_{g+j},b_{g+j+1})$, by Lemma \ref{a2} the product $b^{\{g+j,g+j+1\}}$ must vary over a locus of dimension at most $1$. We have two possibilities.
	\begin{enumerate}
		\item We have $b^{\{g+j,g+j+1\}}\in\{\pm\mathbf1\}$. We must have $2g+(n-3)-1\geq2$ since $Z_2$ lies in the complement of $Z_1$. By Lemma \ref{6d2}, for fixed $(b_{g+j},b_{g+j+1})$ the remaining matrices in $(a_1,\dots,a_m)$ vary over a locus of dimension at most $6g+2(n-3)-3$.
		\item We have $b^{\{g+j,g+j+1\}}\neq\pm\mathbf1$. By Lemma \ref{6d2}, for fixed $(b_{g+j},b_{g+j+1})$ and $b^{\{g+j,g+j+1\}}$ the remaining matrices in $(a_1,\dots,a_m)$ vary over a locus of dimension at most $6g-2(n-3)-3$.
	\end{enumerate}
	Thus, we see that the locus of $(a_1,\dots,a_m)$ in $Z$ with $\tr(b_{g+j}b_{g+j+1})=\pm2$ for some $j\in\{1,\dots,n-2\}$ has dimension bounded by
	$$3+1+(6g+2(n-3)-3)=(6g+2n-3)-2.$$
	
	This shows that $\dim Z_2\leq (6g+2n-3)-2$.
	\item[\textup{(3)}] We claim that $Z_3=Z\setminus(Z_1\cup Z_2)$ must have $\dim Z_3\leq(6g+2n-3)-2$. Suppose first that $g\geq1$. The pair $(a_1,a_2)$ varies over a locus of dimension $6$. For fixed $(a_1,a_2)$, since $\tr(b_1)\neq\pm2$ we see by Lemma \ref{a2} that there are only finitely many possible values of $b^i$. For fixed $(a_1,a_2)$ and value of $b^i$, the remaining matrices in $(a_1,\dots,a_m)$ vary over a locus of dimension $6(g-1)+2(n-1)-3$ by Lemma \ref{6d1} (noting that $2(g-1)+(n-1)-1\geq2$ if $b^i\in\{\pm\mathbf1\}$ by assumption that $Z_3\cap Z_1=\emptyset$). Hence, for $g\geq1$, $Z_3$ has dimension bounded by
 	$$6+(6(g-1)+2(n-1)-3)=(6g+2n-3)-2.$$
	Arguing similarly, if $g=0$ so that $n\geq5$, $Z_3$ has dimension bounded by
	$$4+(6g+2(n-3)-3)=(6g+2n-3)-2.$$
	Therefore, $\dim Z_3\leq (6g+2n-3)-2$.
\end{enumerate}
From the above computations, we conclude that $\dim Z\leq (6g+2n-3)-2$. Hence, $\Rep_{g,n,k}$ is normal, as desired.
\end{proof}

\begin{proposition}
\label{6s2}
$\Rep_{1,2,k}$ is normal.
\end{proposition}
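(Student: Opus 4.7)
The plan is to mirror the proof of Proposition \ref{6s1}, adapted to the edge case $(g,n)=(1,2)$ (so $m=3$). The variety $\Rep_{1,2,k}$ is a complete intersection of codimension $n=2$, cut out by $F_1,F_2$ in the smooth $9$-dimensional $\SL_2^3$, and so is Cohen-Macaulay of dimension $6g+2n-3=7$. By Serre's normality criterion it suffices to show that the singular locus, equal to the intersection of $\Rep_{1,2,k}$ with the critical locus of $F=(F_1,F_2):\SL_2^3\to\Ab^2$, has dimension at most $5$. By Lemma \ref{crit} this singular locus lies in $W\cup Z$, where $W$ is the stratum with $a_3$ scalar and $Z$ is the stratum where all commutator conditions of part (2) hold. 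Following the notation preceding Lemma \ref{crit}, $b_1=\langle a_1,a_2\rangle$ and $b_2=a_3$, so $b^1=a_3$ and $b^2=b_1$.

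The bound $\dim W\leq 5$ is immediate: $a_3=s\mathbf{1}$ with $s\in\{\pm1\}$ forces $k_1=2s$, and then the second trace constraint $\tr(\langle a_1,a_2\rangle a_3)=k_2$ reduces to $\tr(\langle a_1,a_2\rangle)=sk_2$, cutting a divisor out of $\SL_2^2$ to yield a $5$-dimensional locus isomorphic to $\Rep_{1,1,sk_2}$.

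For $\dim Z\leq 5$ I would stratify according to the conjugacy class of $b_1$, which must commute with $a_3$ by $[b_1,b^1]=0$. Using the identity $a_2 a_1^{-1} a_2^{-1}=a_1^{-1}b_1$, the first commutator condition of Lemma \ref{crit}(2) simplifies to $[a_1,b_1 a_3]=0$, with analogous simplifications for the other three. When $b_1$ is non-scalar, its centralizer in $\SL_2$ is $1$-dimensional, so together with $\tr(a_3)=k_1$ this restricts $a_3$ to dimension at most $1$. If $\tr(b_1)=\pm 2$ (unipotent non-scalar), Lemma \ref{6d1} shows $(a_1,a_2)$ contributes at most $5$ dimensions (the $2$-dimensional unipotent conjugacy class times a $3$-dimensional fiber of $\langle-,-\rangle$). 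If $\tr(b_1)\neq\pm2$, then the simplified condition $[a_1,b_1 a_3]=0$ combined with the remaining commutator identities forces $b_1 a_3=\pm\mathbf{1}$, so $a_3=\pm b_1^{-1}$; the resulting codimension-one constraint $\tr(b_1)=\pm k_1$ cuts $(a_1,a_2)\in\SL_2^2$ to a $5$-dimensional locus, again yielding total dimension at most $5$.

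The main obstacle is the stratum $b_1=\pm\mathbf{1}$: the naive bound from Lemma \ref{a4} ($(a_1,a_2)$ in a $4$-dimensional fiber of $\langle-,-\rangle$) together with $a_3\in\SL_{2,k_1}$ (dimension $2$) gives a total of $6$, one too many. To resolve this I would exploit the auxiliary commutator identities of Lemma \ref{crit}(2): under $b_1=\pm\mathbf{1}$ they reduce to $[a_1,a_3]=0$ and $[a_2,a_1^{-1}a_3 a_1]=0$, which together force $[a_1,a_3]=[a_2,a_3]=0$. If $a_3$ is non-scalar, both $a_1$ and $a_2$ must lie in its $1$-dimensional centralizer, so $(a_1,a_2)$ varies in a $2$-dimensional locus and the total dimension is at most $4$; if $a_3$ is scalar we are inside $W$, already bounded. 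Assembling these estimates gives $\dim(W\cup Z)\leq 5$, completing the proof.
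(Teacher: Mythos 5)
Your overall framework (complete intersection, Cohen--Macaulay, Serre's criterion, Lemma \ref{crit}, stratified dimension count) matches the paper, though your stratification of $Z$ by the conjugacy type of $b_1=\langle a_1,a_2\rangle$ differs from the paper's, which stratifies by whether $\tr(a_1)$, $\tr(a_2)$, $\tr(a_1a_2)$ lie in $\{\pm2\}$. Your treatment of $W$ and of the strata $b_1=\pm\mathbf1$ and $\tr(b_1)\neq\pm2$ is essentially sound. However, there is a genuine gap in the stratum where $b_1$ is nonscalar with $\tr(b_1)=\pm2$: there your stated estimates are $(a_1,a_2)$ in a locus of dimension at most $5$ (via Lemma \ref{6d1}) and $a_3$ in a locus of dimension at most $1$, which totals $6$, not $5$. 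The extra dimension for $a_3$ is not spurious: writing $a_3=\lambda_1\mathbf1+\lambda_2 b_1$, both $\tr(a_3)$ and $\tr(b_1a_3)$ are automatically $\pm2$ on the determinant-one locus, so for $k_1,k_2\in\{\pm2\}$ neither trace equation cuts the $a_3$-direction, and the bound of $6$ is what your count actually yields. You use the auxiliary commutator conditions only in the other two strata, so as written the inequality $\dim Z\leq5$ does not follow.

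The gap is fixable by the same dichotomy you invoke when $\tr(b_1)\neq\pm2$, and that dichotomy should itself be made explicit there as well. From $a_3=\lambda_1\mathbf1+\lambda_2 b_1$ and Cayley--Hamilton, $b_1a_3=(\lambda_1+\lambda_2\tr(b_1))b_1-\lambda_2\mathbf1$, so the condition $[a_1,b_1a_3]=0$ gives either $\lambda_1+\lambda_2\tr(b_1)=0$, in which case $b_1a_3$ is scalar of determinant one, $a_3=\pm b_1^{-1}$ is finite over the $5$-dimensional locus of $(a_1,a_2)$ with $\tr\langle a_1,a_2\rangle=\pm2$, or else $[a_1,b_1]=0$; in the latter branch the second condition of Lemma \ref{crit}(2), which is equivalent to $[a_1a_2a_1^{-1},b_1a_3]=0$, then forces $[a_2,b_1]=0$, so $a_1,a_2$ lie in the commutative centralizer of the nonscalar $b_1$ and $b_1=\langle a_1,a_2\rangle=\mathbf1$, a contradiction. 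With this argument inserted (and spelled out likewise in the $\tr(b_1)\neq\pm2$ stratum, where you currently assert rather than prove that the commutator identities force $b_1a_3=\pm\mathbf1$), every stratum of $Z$ is bounded by $5$ and your proof goes through.
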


\begin{proof}
Since $\Rep_{1,2,k}$ is a complete intersection of dimension $7$ in the regular scheme $\M^m$, it is in particular Cohen-Macaulay. By Serre's criterion, it suffices to show that $\Rep_{1,2,k}$ is regular in dimension $1$. It suffices to show that locus defined by each of conditions (1) and (2) of Proposition \ref{crit} is of dimension at most $5$. We shall represent a point of $\Rep_{1,2,k}$ by a triple of matrices $(a_1,a_2,a_3)\in\SL_2^3$ satisfying $\tr(a_3)=k_1$ and $\tr(\langle a_1,a_2\rangle a_3)=k_2$, and adopt the notations of Section \ref{sect:4.1}.

Consider first the locus of $\Rep_{1,2,k}$ where $a_i=\pm\mathbf1$ for some $i\in\{1,2,3\}$. By Lemmas \ref{6d1} and \ref{a4}, we see that this locus has dimension at most $3+2-1=4$ when $i\in\{1,2\}$, and at most $3+3-1=5$ when $i=3$. The union $Y=\bigcup_{i=1}^3\{a_i=\pm\mathbf1\}$ includes the locus of $\Rep_{1,2,k}$ defined by condition (1) of Proposition \ref{crit}, and is at most $5$-dimensional.

It remains to estimate the dimension of the locus $Z\subset\Rep_{1,2,k}\setminus Y$ defined by condition (2) of Proposition \ref{crit}. In what follows, we stratify $Z$ into subloci and estimate their respective dimensions.
\begin{enumerate}
	\item[\textup{(1)}] Let $Z_1\subset Z$ be the locus where $\tr(a_1)\neq\pm2$. From the condition
	$$[a_1^{-1},a_2^{-1}a_3a_1a_2]=0,$$
	and Lemma \ref{a6}, we see that for given $a_1$ there are only finitely many possible values of $a_2^{-1}a_3a_1a_2$. The sublocus where $a_3a_1=\pm\mathbf1$ has dimension at most $2+3=5$, so we may restrict our attention to the sublocus of $Z_1$ where $a_3a_1\neq\pm\mathbf1$. For fixed $a_1$, there are only finitely many possible values of $\tr(a_3a_1)$, and hence $a_3$ varies over a locus of dimension $1$, and for fixed $a_1$ and $a_3$ since there are only finitely many possible values of $a_2^{-1}a_3a_1a_2$ showing that $a_2$ varies over a locus of dimension $1$ (the centralizer of $a_1a_3\neq\pm\mathbf1$ being $1$-dimensional). Thus, the sublocus of $Z_1$ where $a_3a_1\neq\pm\mathbf1$ has dimension at most $3+1+1=5$. Therefore, we conclude that $\dim Z_1\leq 5$.
	\item[\textup{(2)}] Let $Z_2\subset Z\setminus Z_1$ be the locus where $\tr(a_2)\neq\pm2$. We then consider
	$$[a_2,a_1^{-1}a_2^{-1}a_3a_1]=0,$$
	and by the same argument as in part (1) we conclude that $\dim Z_2\leq 5$.
	\item[\textup{(3)}] Let $Z_3\subset Z\setminus(Z_1\cup Z_2)$ be the locus where $\tr(a_1a_2)\neq\pm2$. By the condition
	$$[a_1a_2,a_1^{-1}a_2^{-1}a_3]=0$$
	and Lemma \ref{a6}, for fixed $a_1$ and $a_2$ there are at most finitely many possible values of $a_1^{-1}a_2^{-1}a_3$ and hence of $a_3$. Thus, $\dim Z_4\leq 2+2=4$.
	\item[\textup{(4)}] Let $Z_4=Z\setminus(Z_1\cup Z_2\cup Z_3)$. We must have $\tr(a_1),\tr(a_2),\tr(a_1a_2)\in\{\pm2\}$, and hence $(a_1,a_2)$ varies over a locus of dimension at most $3$. Hence, $\dim Z_4\leq 3+2=5$.
\end{enumerate}
This completes the proof that $\dim Z\leq 5$, and hence $\Rep_{1,2,k}$ is normal.
\end{proof}

\begin{proposition}
\label{6s3}
If $m\geq3$, the intersection $\Crit(E,F)\cap H_{g,n}$ has codimension at least 1 in $H_{g,n}$, and in particular $H_{g,n}$ is reduced.
\end{proposition}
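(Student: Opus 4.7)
The plan is to show that $H_{g,n}$ is a complete intersection of the expected dimension $6g+2n-3$, hence Cohen--Macaulay, and that its critical locus has codimension at least one in $H_{g,n}$; generic reducedness together with the Cohen--Macaulay property then force $H_{g,n}$ to be reduced.

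First I would establish $\dim H_{g,n}=6g+2n-3$. Setting $T=0$ in the equations defining $\Rep_{g,n,k}'$ places us inside $\Mb_0^{2g}\times\Mb_{0,0}^{n-1}$ (a complete intersection of dimension $6g+2n-2$ in $\Mb^m$) with the one remaining equation $\tr(\Phi)=0$, where $\Phi=\langle X_1,X_2\rangle\cdots\langle X_{2g-1},X_{2g}\rangle X_{2g+1}\cdots X_{2g+n-1}$. Since $\det(\Phi)=0$ automatically, we may view $H_{g,n}=\Phi^{-1}(\Mb_{0,0})$ for the morphism $\Phi:\Mb_0^{2g}\times\Mb_{0,0}^{n-1}\to\Mb_0$. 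Applying Lemmas \ref{6d3} and \ref{6d4} with $n$ replaced by $n-1$, the map $\Phi$ is flat above $\Mb_0\setminus\{0\}$ with fibers of pure dimension $6g+2n-5$, and $\dim\Phi^{-1}(0)\leq 6g+2n-3$; combined with the matching lower bound from the complete intersection count, this yields $\dim H_{g,n}=6g+2n-3$, so $H_{g,n}$ is a complete intersection and in particular Cohen--Macaulay.

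I would then analyze the critical locus. For $m\geq 3$, both $\partial E_i/\partial T=-2T$ and $\partial F_n/\partial T=-k_n(4g+n-1)T^{4g+n-2}$ vanish at $T=0$, so $\Crit(E,F)\cap H_{g,n}$ is contained in the locus where the differentials of $\det(X_1),\ldots,\det(X_m)$, $\tr(X_{2g+1}),\ldots,\tr(X_{2g+n-1})$, and $\tr(\Phi)$ fail to span an $(m+n)$-dimensional subspace of $(T\Mb^m)^*$. This produces a dichotomy analogous to Lemma \ref{crit}: either (i) some $X_{i_0}=0$, so $d\det(X_{i_0})$ vanishes, or (ii) all $X_i\neq 0$, and the partial derivatives $Q_i=\partial\tr(\Phi)/\partial X_i$ satisfy $Q_i\in\Cb\cdot X_i^*$ for $i\leq 2g$ and $Q_i\in\Cb\cdot X_i^*+\Cb\cdot\mathbf{1}$ for $i>2g$. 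Via tangent-space calculations as in Lemma \ref{tanlem}, these conditions translate into the commutator identities $[b_i,b^i]=0$ for $i\in\{1,\ldots,g+n-1\}$ together with the four genus-counterpart identities of part (2) of Lemma \ref{crit}. Case (i) is immediate: setting $X_{i_0}=0$ forces $\Phi=0$ and hence $\tr(\Phi)=0$ automatically, and the remaining matrices vary in a scheme of dimension at most $6g+2n-4$.

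Case (ii) is the main obstacle. I would mirror the stratification in Propositions \ref{6s1} and \ref{6s2}, splitting case (ii) according to whether various $b_i$ and $b^i$ are scalar or nonscalar and whether their traces take the critical values $\pm 2$, and within each stratum invoke Lemmas \ref{6d3} and \ref{6d4} (in place of \ref{6d1} and \ref{6d2}) together with standard centralizer dimension bounds -- recall that $[b_i,b^i]=0$ with $b_i$ nonscalar forces $b^i$ into the two-dimensional centralizer of $b_i$ -- to show each stratum has dimension at most $6g+2n-4$. The difficulty lies in carefully tracking the non-vanishing conditions $a_i\neq 0$ alongside the commutator conditions across the cases, but no new technique beyond those already developed for the $\SL_2$ setting in Propositions \ref{6s1} and \ref{6s2} is needed. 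Once this bookkeeping is complete, $H_{g,n}$ is generically smooth and hence, being Cohen--Macaulay, reduced.
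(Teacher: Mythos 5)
Your overall skeleton matches the paper's: bound the locus $\{X_{i_0}=0\}$ (which absorbs $\Crit(E)\cap H_{g,n}$ and condition (1) of Lemma \ref{crit}), then bound the locus where all $X_i\neq0$ and the commutator identities of Lemma \ref{crit}(2) hold, and finally use Cohen--Macaulayness of $H_{g,n}$ to pass from generic reducedness to reducedness. The first and last steps are fine. The gap is in your case (ii), which you dispose of by asserting that one can ``mirror'' the stratifications of Propositions \ref{6s1} and \ref{6s2} (scalar vs.\ nonscalar, traces equal to $\pm2$, centralizer bounds), with ``no new technique'' needed. That is exactly where the $T=0$ fiber behaves differently and where the paper's proof has its real content. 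In $\Mb_0$ and $\Mb_{0,0}$ a scalar matrix is zero, the trace values $\pm2$ play no role, and --- crucially --- a product of nonzero determinant-zero matrices can itself be zero. So the dangerous strata are not those where some $b_i$ is central or has trace $\pm2$, but those where partial products $b_I$, $b^I$ vanish identically; on such strata the commutator conditions $[b_i,b^i]=0$ (and the four genus identities) become vacuous or severely weakened, and your centralizer argument gives nothing. (Also, within $\Mb_0$ the relevant centralizer locus is $1$-dimensional, not $2$-dimensional; see Lemma \ref{a2}(1).)

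The paper handles this by a stratification with no $\SL_2$ analogue: it decomposes the condition-(2) locus $Z$ by cyclic intervals $I\subset\Cfrak_{g+n-1}$ according to which partial products $b_I$ are nonzero, and on the strata where $b_i=\langle a_{2i-1},a_{2i}\rangle=0$ or $b^I=0$ it replaces the commutator analysis by structural facts about determinant-zero matrices: Lemma \ref{a6} (vanishing of $\langle a,b\rangle$ forces one of $ab$, $ba^*$, $a^*b^*$ to vanish), Lemmas \ref{a5} and \ref{a8} (dimensions of loci such as $ab=0$ or $bab'=0$), and the degenerate fiber bound Lemma \ref{6d4} for $\Psi^{-1}(0)$, together with Lemma \ref{6d3} in place of \ref{6d1}. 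It also treats $(g,n)=(0,4)$ and $(1,2)$ (i.e.\ $m=3$) by separate ad hoc arguments because the cyclic-interval machinery needs $2g+n-1\geq4$; your proposal does not address these cases either. So while your plan identifies the right target bound $\dim\leq(6g+2n-3)-1$, the claim that the $\SL_2$ bookkeeping transfers is unjustified, and filling it in genuinely requires the vanishing-product stratification and the auxiliary lemmas on $\Mb_0$ rather than the arguments of Propositions \ref{6s1} and \ref{6s2}.
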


\begin{proof}
First, the locus in $H_{g,n}$ determined by the condition $a_i=0$ has dimension
\begin{align*}
&3+6(g-1)+2(n-1)=(6g+2n-3)-2&&\text{if $i\in\{1,\dots,2g\}$},\\
&6g+2(n-2)=(6g+2n-3)-1&&\text{if $i\in\{2g+1,\dots,2g+n-1\}$}.
\end{align*}
Therefore, the union $Y=\bigcup_{i=1}^m\{a_i=0\}$ has dimension at most $(6g+2n-3)-1$. Since $\Crit(E)\cap H_{g,n}$ and the locus defined by condition (1) of Lemma \ref{crit} both lie in $Y$, their dimensions also cannot be more than $(6g+2n-3)-1$.

Next, let $Z\subset H_{g,n}\setminus Y$ be the locus defined by condition (2) of Lemma \ref{crit}. We must show that $\dim Z\leq(6g+2n-3)-1$. Let us first treat the cases $(g,n)=(0,4)$ and $(1,2)$ separately.

\begin{enumerate}
	\item[\textup{(1)}] Let $(g,n)=(0,4)$. The locus $Z$ consists of $(a_1,a_2,a_3)\in\M_{0,0}^3(\C)$ with $a_i$ nonzero and $[a_1,a_2a_3]=[a_2,a_3a_1]=[a_3,a_1a_2]=0$.	Suppose that one of $a_1a_2$, $a_2a_3$, and $a_3a_1$ is nonzero; say $a_1a_2\neq0$. Then we must have $\tr(a_1a_2)\neq0$ by Lemma \ref{a7}.(1). But the condition $[a_3,a_1a_2]=0$ implies that $a_1a_2$ is a scalar multiple of $a_3$ and hence $\tr(a_1a_2)=0$, a contradiction. Arguing similarly for the other cases, we conclude that
	$$a_1a_2=a_2a_3=a_3a_1=0.$$
	The condition $\tr a_2=\tr a_3=0$ then implies that $a_2$ and $a_3$ are both scalar multiples of $a_1$. Thus, $Z$ is $4$-dimensional.
	\item[\textup{(2)}] Let $(g,n)=(1,2)$. The locus $Z$ consists of $(a_1,a_2,a_3)\in\M_0^2\times\M_{0,0}(\C)$ with $a_i$ nonzero satisfying, among other things, $[\langle a_1,a_2\rangle,a_3]=0$.	By Lemma \ref{a2}.(1), this implies that $\langle a_1,a_2\rangle$ is a scalar multiple of $a_3$ and in particular $\tr\langle a_1,a_2\rangle=0$. The condition $\tr(\langle a_1,a_2\rangle a_3)=0$ then shows that in fact $\langle a_1,a_2\rangle a_3=0$ by Lemma \ref{a7}(1). The sublocus of $A$ where $\langle a_1,a_2\rangle\neq0$ has dimension
	$$2+1+3=6.$$
	Indeed, $a_3$ varies over a locus of dimension $2$, for fixed $a_3$ the value of $\langle a_1,a_2\rangle$ varies over a locus of dimension $1$ by Lemma \ref{a2}, and for fixed value of $\langle a_1,a_2\rangle\neq0$ the pair $(a_1,a_2)$ varies over a locus of dimension $3$ by Lemma \ref{6d3}.
	
	It remains to consider the locus $\langle a_1,a_2\rangle=0$. This condition implies that at least one of $a_1a_2, a_2a_1^*,a_1^*a_2^*$ is zero, by Lemma \ref{a6}.(2). The locus $(a_1,a_2)$ where at least two of them are zero has dimension at most $4$ by Lemma \ref{a8}, and hence $(a_1,a_2,a_3)$ would vary over a locus of dimension at most $4+2=6$, as desired. Thus, we may assume that exactly one of $a_1a_2, a_2a_1^*,a_1^*a_2^*$ is zero. We thus have the following possibilities.
	\begin{enumerate}
		\item Consider $a_1a_2=0$ and $a_2a_1^*,a_1^*a_2^*\neq0$. We must have $a_2a_1^*a_2^*\neq0$ by Lemma \ref{a6}.(1), and $a_1\neq0$. The condition
	$$[a_2a_1^*a_2^*,a_3a_1]=0$$
	shows that $\tr(a_3a_1)=0$ since $\tr(a_2a_1^*a_2^*)=0$. For fixed $a_3$, the locus of $a_1\in\M_0(\C)$ with $\tr(a_3a_1)=0$ has dimension $2$ (as easily verified when $a_3=\left[\begin{smallmatrix}0 & 1\\ 0 & 0\end{smallmatrix}\right]$, and deduced from this in the other cases by conjugation). For fixed nonzero $a_1$ the locus of of $a_2\in\M_0(\C)$ with $a_1a_2=0$ has dimension $2$ by Lemma \ref{a5}(1). Thus, the locus of $(a_1,a_2,a_3)\in Z(\C)$ with $a_1a_2=0$ has dimension at most $2+2+2=6$, as desired.
		\item Consider $a_2a_1^*=0$ and $a_1a_2,a_1^*a_2^*\neq0$, so that $(a_1,a_2)$ varies over a locus of dimension at most $5$. We have $a_1(\tr(a_2)\mathbf1-a_2)=(a_2a_1^*)^*=0$ which implies that $\tr a_2\neq0$ and similarly $\tr a_1\neq0$. Now, we have
		$$a_1^*a_2^*a_3a_1a_2=-[a_2,a_1^*a_2^*a_3a_1]=0.$$
	Since $a_1^*a_2^*,a_1a_2\neq0$ and $\tr(a_3)=0$, the above condition implies that, for fixed $(a_1,a_2)$, $a_3$ varies over a locus of dimension at most $1$ by Lemma \ref{a5}.(3). Hence, $(a_1,a_2,a_3)$ varies over a locus of dimension $5+1=6$, as desired.
	\item Consider $a_1^*a_2^*=0$. We have $a_2^*\neq0$ and $a_1a_2a_1^*\neq0$, and from the condition $[a_2^*a_3,a_1a_2a_1^*]=0$ we argue as in (a) to see that this locus has dimension at most $6$, as desired.
		\end{enumerate}
		Thus, we have shown that $Z$ has dimension $6$.
\end{enumerate}
In the remainder of the proof, we thus assume that $2g+n-1\geq4$. Let us use the notations of Section \ref{sect:4.1}. For a cyclic interval $I$ in $\Cfrak_{g+n-1}$, let $Z_I\subset Z$ be the sublocus defined by the condition $b_I\neq0$. Note that if $I\subseteq J$ are two cyclic intervals (containment meaning that $I$ is a subsequence of $J$) then $Z_J\subseteq Z_I$. Let
$$W_I=Z_I\setminus\bigcup_{I\subsetneq J}Z_J$$
where the union on the right hand side runs over all cyclic intervals $J$ in $\Cfrak_{g+n-1}$ containing $I$ and distinct from $I$. Note that we have a stratification
$$Z=\coprod_I W_I$$
where the union runs over the finitely many cyclic intervals $I$ of $\Cfrak_{g+n-1}$. Hence, it suffices to show that each $W_I$ has dimension at most $(6g+2n-3)-1$.

First, consider the case where $|I|=g+n-1$, and write $I=(i_0,\dots,i_0+g+n-2)$. The condition $[b_{i_0},b^{i_0}]=0$ on $Z$ with $b_{i_0},b^{i_0}\neq0$ then implies that, by Lemmas \ref{a2} and \ref{6d3}, the dimension of $W_I$ is at most
\begin{align*}
&6+1+6(g-1)+2(n-1)-3=(6g+2n-3)-1&&\text{if $g\geq1$, or}\\
&2+1+2(n-2)-3=(6g+2n-3)-1&&\text{if $g=0$}
\end{align*}
as desired.

Now, suppose that $|I|<g+n-1$. Let us further stratify $W_I=W_I'\sqcup W_I''$ where $W_I'$ (resp.~$W_I''$) is the sublocus defined by $b^I\neq0$ (resp.~$b^I=0$). The subloci $W_I'$ can be handled as above and shown to have dimension at most $(6g+2n-3)-1$. Thus, it remains to consider the subloci $W_I''$ for cyclic intervals $I$ with $|I|<g+n-1$. There are two possibilities to consider.
\begin{enumerate}
	\item[\textup{(1)}] We have $|I^c|=1$. Since $Z\subset H_{g,n}\setminus Y$, we must then have $I^c=\{i\}$ for some $i\in\{1,\dots,g\}$. Since $b_i=\langle a_{2i-1},a_{2i}\rangle=0$ on $W_I''$, by Lemma \ref{a6} at least one of $a_{2i-1}a_{2i}$, $a_{2i}a_{2i-1}^*$, and $a_{2i-1}^*a_{2i}^*$ must be zero. By Lemma \ref{a8}, the sublocus of $W_I''$ where at least two of the products is zero has dimension
	$$4+6(g-1)+2(n-1)=(6g+2n-3)-1.$$
	Thus, we are left to consider the sublocus $W_I'''$ of $W_I''$ where exactly one of the products $a_{2i-1}a_{2i}$, $a_{2i}a_{2i-1}^*$, and $a_{2i-1}^*a_{2i}^*$ is be zero. We thus have the following possibilities.
	\begin{enumerate}
		\item Consider $a_{2i-1}a_{2i}=0$ and $a_{2i}a_{2i-1}^*,a_{2i-1}^*a_{2i}^*\neq0$. We must then have $a_{2i}a_{2i-1}^*a_{2i}^*\neq0$ by Lemma \ref{a6}, and $a_{2i-1}\neq0$. The condition
	$$[a_{2i}a_{2i-1}^*a_{2i}^*,b^ia_{2i-1}]=0$$
	shows that $\tr(b^ia_{2i-1})=0$ since $\tr(a_{2i}a_{2i-1}^*a_{2i}^*)=0$. For fixed $b^i=b_I$ (which is nonzero on $W_I$), we see that the locus of $a_{2i-1}\in\M_0(\C)$ with $\tr(b^ia_{2i-1})=0$ has dimension $2$, and for fixed nonzero $a_{2i-1}$ the locus of $a_{2i}\in\M_0(\C)$ with $a_{2i-1}a_{2i}=0$ has dimension at most $2$. Thus, the locus of $(a_1,\dots,a_m)\in W_I''(\C)$ with $a_{2i-1}a_{2i}=0$ has dimension at most
	$$2+2+6(g-1)+2(n-1)=(6g+2n-3)-1$$
	as desired.
		\item Consider $a_{2i}a_{2i-1}^*=0$ and $a_{2i-1}a_{2i},a_{2i-1}^*a_{2i}^*\neq0$, so that $(a_{2i-1},a_{2i})$ varies over a locus of dimension at most $5$. Now, we have
		$$a_{2i-1}^*a_{2i}^*b^ia_{2i-1}a_{2i}=-[a_{2i-1}^*,a_{2i}^*b^ia_{2i-1}a_{2i}]=0.$$
		Since $a_{2i-1}^*a_{2i}^*,a_{2i-1}a_{2i}\neq0$, the above condition implies that, for fixed $(a_1,a_2)$, the value of $b^i$ varies over a locus of $2$ by Lemma \ref{a5}(3). Hence, by Lemma \ref{6d3}, the the sublocus of $W_I''$ with $a_{2i}a_{2i-1}^*=0$ has dimension at most
		$$5+2+6(g-1)+2(n-1)-3=(6g+2n-3)-1$$
		as desired.
		\item Consider $a_{2i-1}^*a_{2i}^*=0$ and $a_{2i-1}a_{2i},a_{2i}a_{2i-1}^*\neq0$. We must then have $a_{2i-1}a_{2i}a_{2i-1}^*\neq0$ by Lemma \ref{a6}, and $a_{2i}^*\neq0$. From the condition $[a_{2i}^*b^i,a_{2i-1}a_{2i}a_{2i-1}^*]=0$ we argue as in (a) to see that this locus has dimension at most $(6g+2n-3)-1$, as desired.
	\end{enumerate}
	\item[\textup{(2)}] We have $|I^c|\geq 2$. Let us write $I^c=\{i_0,\dots,i_0+k\}$ for some $k\geq1$. Let $J=\{i_0+1,\dots,i_0+k\}$, so that we have, on $W_I''$,
	$$b_{i_0}b_{J}=0,\quad b_Ib_{i_0}=0,\quad b_Jb_I=0.$$
	Let us further stratify $W_I''=V_I\sqcup V_I'$ where $V_I$ (resp.~$V_I'$) is the sublocus defined by the condition $b_J\neq 0$ (resp.~$b_J=0$). It remains to show that $V_I$ and $V_I'$ each have dimension at most $(6g+2n-3)-1$.
	\begin{enumerate}
		\item Let us define
		\begin{align*}
		g_1&=|I\cap\{1,\dots,g\}|,\\
		n_1&=|I\cap\{g+1,\dots,g+n-1\}|,\\
		g_2&=|J\cap\{1,\dots,g\}|,\quad \text{and}\\
		n_2&=|J\cap\{g+1,\dots,g+n-1\}|.
		\end{align*}
		Now, if $i_0\in\{1,\dots,g\}$, then by (a minor variant of) Lemmas \ref{6d3} and \ref{a5} and condition $b_{i_0}b_J=b_Ib_{i_0}=0$, $V_I$ has dimension bounded by
		$$6+(6g_1+2n_1-1)+(6g_2+2n_2-1)=6g+2(n-1)-2=(6g+2n-3)-1$$
		where $g_1+g_2=g-1$ and $n_1+n_2=n-1$. On the other hand, if $i_0\in\{g+1,\dots,g+n-1\}$, then by a similar argument $V_I$ has dimension bounded by
		$$2+(6g_1+2n_1-1)+(6g_2+2n_2-1)=6g+2(n-2)=(6g+2n-3)-1$$
		where $g_1+g_2=g$ and $n_1+n_2=n-2$. This proves that $V_I$ has dimension at most $(6g+2n-3)-1$, as desired.
		\item In addition to $g_i$ and $n_i$ as above, let us set
		\begin{align*}
		g'_1&=|(I\cup\{i_0\})\cap\{1,\dots,g\}|,\quad\text{and}\\
		n'_1&=|(I\cup\{i_0\})\cap\{g+1,\dots,g+n-1\}|.
		\end{align*}
		Since $g'_1+g_2=g$ and $n'_1+n_2=n-1$, by Lemma \ref{6d4} the dimension of $V'_I$ is bounded by
		$$(6g'_1+2n_1'-1)+(6g_2+2n_2-1)=6g+2(n-1)-2=(6g+2n-3)-1$$
		proving the desired result.
	\end{enumerate}
\end{enumerate}
This completes the proof that $\dim W_I''\leq (6g+2n-3)-1$. Therefore, we have $\dim Z\leq (6g+2n-3)-1$, completing the proof that the intersection $\Crit(E,F)\cap H_{g,n}$ has codimension at least $1$ in $H_{g,n}$. Finally, since $H_{g,n}$ is Cohen-Macaulay, the second statement of the proposition follows from the first.
\end{proof}

\section{Curves on surfaces} \label{sect:5}

The purpose of this section is to prove Theorem \ref{mtheorem2}. We first recall a result of Charles-March\'e \cite{cm} on multicurves in Section \ref{sect:5.1}, and give a refinement of it in Section \ref{sect:5.2} in the context of word length filtrations. We deduce Theorem \ref{mtheorem2} from this in Section \ref{sect:5.3}.

\subsection{Result of Charles-March\'e} \label{sect:5.1}
Let $\Sigma_{g,n}$ be a compact oriented surface of genus $g$ with $n\geq1$ boundary curves satisfying $2g+n-1\geq2$. Let us denote by $\chi_{g,n}$ the set of conjugacy classes of $\pi_1(\Sigma_{g,n})$, and let $\bar\chi_{g,n}$ be the quotient of $\chi_{g,n}$ obtained by setting the conjugacy class of $a\in\pi_1(\Sigma_{g,n})$ to be equivalent to that of $a^{-1}$. We shall refer to elements of $\bar\chi_{g,n}$ as reduced homotopy classes. We may identify $\chi_{g,n}$ with the set of free homotopy classes of loops $S^1\to \Sigma_{g,n}$, and $\bar\chi_{g,n}$ with the set of free homotopy classes of loops considered up to (possibly orientation-reversing) reparametrizations of $S^1$. Let $\Fin\bar\chi_{g,n}$ denote the collection of all finite multisets of elements in $\bar\chi_{g,n}$.

A \emph{curve} in $\Sigma_{g,n}$ will always be assumed simple and closed. A \emph{multicurve} in $\Sigma_{g,n}$ is a finite disjoint union of curves. A curve is \emph{nondegenerate} if it is not contractible, and \emph{essential} if it is nondegenerate and not isotopic to a boundary curve of $\Sigma_{g,n}$. A multicurve is nondegenerate, resp.~essential, if each of its components is.

Given a nondegenerate curve $a\subset \Sigma_{g,n}$, there is a well-defined element $a\in\bar\chi_{g,n}$ obtained by taking the reduced homotopy class of any choice of parametrization $S^1\to a$. This extends to a well-defined assignment
\begin{equation}
\label{assignment}
\{\text{isotopy classes of nondegenerate multicurves in $\Sigma_{g,n}$}\}\to\Fin\bar\chi_{g,n}
\end{equation}
sending $a=\coprod_i a_i\mapsto\{a_i\}$. Let $X_{g,n}$ be the character variety of $\Sigma_{g,n}$. In the coordinate ring $R_{g,n}=\C[X_{g,n}]$, we have $\tr_{a^{-1}}=\tr_{a}$ and $\tr_{aba^{-1}}=\tr_{b}$ for every $a,b\in\pi_1(\Sigma_{g,n})$. In particular, the function $\pi_1(\Sigma_{g,n})\to R_{g,n}$ given by $a\mapsto\tr_{a}$ factors through the projection $\pi_1(\Sigma_{g,n})\to\bar\chi_{g,n}$. We extend the function $\bar\chi_{g,n}\to R_{g,n}$ to an assignment
$$\Fin\bar\chi_{g,n}\to R_{g,n}$$
sending $a=\{a_1,\dots,a_s\}\mapsto \tr_{a}=\tr_{a_1}\dotsm \tr_{a_s}\in R_{g,n}$. In particular, for each isotopy class of a nondegenerate multicurve $a=\coprod_i a_i$ in $\Sigma_{g,n}$, there is a well-defined regular function $\tr_{a}=\prod_i\tr_{a_i}\in R_{g,n}$ (the empty multicurve $\emptyset$ corresponding to $1$). We have the following result due to Charles and March\'e \cite[Theorem 1.1]{cm}, which shows in particular that the assignment (\ref{assignment}) above is injective.

\begin{theorem}[Charles-March\'e \cite{cm}]
\label{cm}
The functions $\tr_{a}$, as $a$ runs over the isotopy classes of nondegenerate multicurves in $\Sigma_{g,n}$, form a $\C$-linear basis of $R_{g,n}$.
\end{theorem}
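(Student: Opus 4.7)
The plan is to split the statement into spanning and linear independence. For spanning, I would use the trace identity $[a][b] = [ab] + [ab^{-1}]$ in $R_{g,n}$ topologically, as a skein relation at a transverse double point: given any $a \in \pi_1(S_{g,n})$, realize it by an immersed loop $\gamma \subset S_{g,n}$ in general position, and induct on the total number of self-intersections, applying the identity at each double point to express $[\gamma]$ as a difference of two classes of loops with strictly fewer crossings. The analogous maneuver for two mutually intersecting components reduces any monomial $[a_1]\cdots[a_s]$ to a $\Cb$-linear combination of $[b]$ with $b$ an embedded multicurve; null-homotopic components peel off as the scalar $2$, which gives the spanning statement.

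For linear independence, which is the main obstacle, I would work through the Kauffman bracket skein module $K_{A=-1}(S_{g,n})$ of the thickened surface. There is a natural $\Cb$-algebra map $K_{-1}(S_{g,n}) \to R_{g,n}$ sending a simple loop $\gamma$ to $-[\gamma]$, under which the Kauffman relation at $A=-1$ matches the trace identity above. Bullock's theorem that multicurves span $K_{-1}(S_{g,n})$, combined with the spanning argument of the previous paragraph, gives surjectivity, so everything reduces to injectivity. For this, the plan is to fix a pants decomposition $\mathcal{P}$ of $S_{g,n}$ and use Dehn--Thurston coordinates to label each isotopy class of non-peripheral multicurve by an integer tuple of geometric intersection numbers and twist parameters with respect to $\mathcal{P}$.

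The hard step is to convert those combinatorial labels into quantitative separation of trace functions. Along a suitable family $\rho_s$ of $\SL_2(\Cb)$-characters adapted to $\mathcal{P}$ (e.g.\ a pinching family in which $|\tr \rho_s(\gamma_i)|$ grows exponentially in a real parameter $s$), the leading order asymptotics of $[a](\rho_s)$ should be controlled by the Dehn--Thurston coordinates of $a$, with distinct multicurves producing distinct dominant exponents and thus forbidding any nontrivial linear relation. Making this uniform across all multicurves simultaneously, and not merely comparing finitely many at a time, is the delicate step; this is precisely the content of the argument of Charles--March\'e, whose combination of skein-theoretic and hyperbolic-geometric inputs I would invoke rather than reconstruct from scratch.
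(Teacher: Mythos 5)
The paper does not prove Theorem \ref{cm} at all: it is imported wholesale from Charles--March\'e \cite{cm}, so the only internal point of comparison is Lemma \ref{basislem}, whose proof uses exactly the spanning mechanism you describe (resolving a crossing via $[a][b]=[ab]+[ab^{-1}]$ and inducting on the number of intersection points). That half of your proposal is sound. Two smaller remarks on the skein detour: freeness of the Kauffman bracket skein module of a thickened surface on multicurves is Przytycki's theorem, while Bullock's theorem concerns the surjection onto the character ring with nilpotent kernel; and in any case the detour buys nothing, since injectivity of $K_{-1}(S_{g,n})\to R_{g,n}$ is literally equivalent to the linear independence you still have to establish.

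The genuine gap is the independence half. As stated, the separation-of-exponents idea fails: along a single pinching family adapted to a pants decomposition, the dominant growth of $[a](\rho_s)$ is governed by the geometric intersection numbers of $a$ with the pinched curves, and these do not determine the multicurve. For instance, if $a$ is any multicurve and $T_\gamma$ is the Dehn twist along a pants curve $\gamma$, then $a$ and $T_\gamma(a)$ have identical intersection numbers with every pants curve (the pants curves are fixed by $T_\gamma$ up to isotopy) but are in general non-isotopic; they produce the same dominant exponent, so no nontrivial relation among them is excluded. Repairing this requires a multi-parameter deformation that also detects the twist coordinates (twist flows or complexified Fenchel--Nielsen data), together with uniformity over the infinitely many multicurves that could enter a relation -- and at precisely this point your proposal says it would invoke ``the argument of Charles--March\'e,'' i.e.\ the theorem being proved. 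That is circular as a proof, although it matches how the paper itself uses the statement, namely as a citation. For the record, the actual proof in \cite{cm} is different in flavor from your sketch: it works on the $\SU(2)$ character variety and runs a Fourier-type analysis for the Goldman twist-flow torus actions attached to a pants decomposition, reading off intersection and twist data of a multicurve from moment-map images and Fourier modes, rather than degenerating $\SL_2(\Cb)$ characters by pinching.
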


\subsection{Multicurves} \label{sect:5.2}
Let $\Sigma_{g,n}$ be a compact oriented surface of genus $g$ with $n\geq1$ boundary curves satisfying $m=2g+n-1\geq2$. We shall work with a presentation of $\Sigma_{g,n}$ as a ribbon graph, explicitly described as follows. Let $\Db=\{z\in\C:|z|\leq1\}$ be the closed unit disk with center $p_0$. Let us fix a sequence
$$\Ib_1,\Ib_2',\Ib_1',\Ib_2,\dots,\Ib_{2g-1},\Ib_{2g}',\Ib_{2g-1}',\Ib_{2g},\Ib_{2g+1},\Ib_{2g+1}',\dots,\Ib_{2g+n-1},\Ib_{2g+n-1}'$$
of disjoint closed intervals on the boundary $\del\Db$ of the disk, ordered clockwise. We shall also denote
$$\Ib=\bigcup_{i=1}^m(\Ib_k\cup\Ib_k').$$ We attach $2g+n-1$ rectangular strips $R_1,\dots,R_{2g+n-1}$ to $\del\Db$ so that $R_k$ joins $\Ib_k$ and $\Ib_k'$ in such a way that the resulting surface remains orientable. The resulting surface $\Sigma_{g,n}$ has genus $g$ and has $n\geq1$ boundary components; see Figure \ref{fig2} for an illustration in the case $(g,n)=(2,2)$.

\begin{figure}[ht]
    \centering
    \includegraphics{./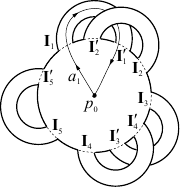}
    \caption{Ribbon graph presentation of $\Sigma_{2,2}$}
    \label{fig2}
\end{figure}

For $k=1,\dots,m$, let $a_k\in\pi_1(\Sigma_{g,n},p_0)$ be (the class of) a based loop which passes from $p_0$ to a point in $\Ib_k$ via a line segment in $\Db$, then to a point of $\Ib_k'$ via a simple path in $R_k$, then back to $p_0$ via a line segment in $\Db$. Finally, let $a_{2g+n}$ be the loop that is given by a line segment from $p_0$ to a point $q\in\del\Db$ lying between $\Ib_m'$ and $\Ib_1$, traveling around the boundary curve of $\Sigma_{g,n}$ containing $q$ once counterclockwise, and returning to $p_0$ via the same line segment joining $q$ to $p_0$. Under these choices, we have the standard presentation
$$\pi_1(\Sigma_{g,n})=\langle a_1,\dots,a_{2g+n}|[a_1,a_2]\dotsm[a_{2g-1},a_{2g}]a_{2g+1}\dotsm a_{2g+n}\rangle$$
and we set $\sigma=\{a_1,\dots,a_m\}$ to be the set of standard free generators. Now, given a collection of loops $a:\coprod_{i=1}^s S^1\to \Sigma_{g,n}$, there exists a homotopic immersion $a_0:\coprod_{i=1}^s S^1\to \Sigma_{g,n}$ such that
\begin{itemize}
	\item $\Img(a_0)$ intersects $\Ib$ only finitely many times,
	\item $\Img(a_0)$ contains no intersections outside of the interior of $\Db$, and
	\item $|\Img(a_0)\cap\Ib_k|=|\Img(a_0)\cap\Ib_k'|$ for every $k\in\{1,\dots,m\}$.
\end{itemize}
This motivates the following definition. By a \emph{chord} in $\Db$ we mean a straight line segment in $\Db$ joining two distinct points of $\del\Db$. Given two distinct points $v_1,v_2\in\del\Db$, we shall write $\{v_1,v_2\}$ to denote the chord joining them.

\begin{definition}
Let $r\geq0$ be an integer. We denote by $D_{g,n}(r)$ the collection of graphs $\Pi=(V_\Pi,E_\Pi)$ drawn (but not necessarily embedded) in $\Db$, consisting of a collection $E_\Pi=\{e_1,\dots,e_r\}$ of chords in $\Db$ for edges, such that the set of vertices $V_\Pi=\bigcup_{i=1}^r\del e_i$ consists of $2r$ distinct points lying in $\Ib\subset\del\Db$ and satisfying $|\Ib_k\cap V_\Pi|=|\Ib_k'\cap V_\Pi|$ for every $k=1,\dots,m$. We set $D_{g,n}=\bigcup_{r\geq0}D_{g,n}(r)$. Let $C_{g,n}(r)$ be the set of planar graphs in $D_{g,n}(r)$, and set $C_{g,n}=\bigcup_{r=0}^\infty C_{g,n}(r)$.
\end{definition}

A graph $\Pi\in D_{g,n}$ is considered only up to isotopy of the vertex set $V_\Pi$ within $\Ib$. By a generic choice of vertex positions, we may assume that each intersection point among the chords lies on exactly two chords, and denote by $I(\Pi)$ the number of intersection points on $\Pi$ under such configuration. Let us call $\Pi\in D_{g,n}$ \emph{reduced} if no edge of $\Pi$ joins two vertices lying on the same connected component of $\Ib$.

Given $\Pi\in D_{g,n}$, we obtain a finite	collection $a(\Pi)$ of loops in $\Sigma_{g,n}$ (considered up to reparametrization of each loop) by joining vertices of $\Pi$ in each $\Ib_k$ to $\Ib_k'$ by pairwise non-intersecting segments in $R_k$. This gives rise to an assignment
$$D_{g,n}\to\Fin\bar\chi_{g,n}$$
which is surjective by the previous discussion. If $\Pi\in C_{g,n}$, the construction $a(\Pi)$ gives us a multicurve in $\Sigma_{g,n}$, and the assignment $D_{g,n}\to\Fin\bar\chi_{g,n}$ restricts to
$$C_{g,n}\to\{\text{isotopy classes of multicurves in $\Sigma_{g,n}$}\}$$
which is surjective (but not injective in general).

The standard generating set $\sigma=\{a_1,\dots,a_m\}\subset\pi_1(\Sigma_{g,n})$ defines the word length function $\length_{\sigma}:\pi_1(\Sigma_{g,n})\to\Z_{\geq0}$, giving rise to $\length_\sigma:\overline\chi_{g,n}\to\Z_{\geq0}$ defined by
$$\length_\sigma(a)=\min\{\length_\sigma(b):\text{$b\in\pi_1(\Sigma_{g,n})$ lies in class $a$}\}.$$
We extend this additively to a function $\length_\sigma:\Fin\bar\chi_{g,n}\to\Z_{\geq0}$; namely, given $b=\{b_1,\dots,b_s\}\in\Fin\bar\chi_{g,n}$ we define $\length_\sigma(b)=\sum_{i=1}^s\length_\sigma(b_i)$. In particular, the length of a nondegenerate multicurve is well-defined via the assignment (\ref{assignment}). We remark that, by definition, the empty multicurve $\emptyset\subset \Sigma_{g,n}$ has length $\length_\sigma(\emptyset)=0$, and it is the unique nondegenerate multicurve with this property.

\begin{lemma}
\label{f}
For each isotopy class of a nondegenerate multicurve $a\subset \Sigma_{g,n}$, there is a unique reduced $\Pi\in C_{g,n}$ such that $a=a(\Pi)$. Furthermore, if $\length_\sigma(a)=r$, then $\Pi\in C_{g,n}(r)$.
\end{lemma}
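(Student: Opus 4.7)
The plan is to place the multicurve $a$ into minimal position with respect to the interior arc system $\Ib\subset S_{g,n}$, then read off $\Gamma$ by straightening the arcs of $a$ in $\Db$. First I isotope $a$ to be transverse to $\Ib$ and to admit no bigon with any arc of $\Ib$. In this minimal position, the restriction of $a$ to $\Db$ is a disjoint union of finitely many simple arcs with endpoints on $\Ib\subset\del\Db$; disjointness forces their boundary points not to interleave on $\del\Db$, so simultaneously straightening them to chords with the same endpoints produces a \emph{planar} graph $\Gamma$. Minimality precludes any arc of $a$ in $\Db$ from having both endpoints on the same component of $\Ib$---such an arc would bound a bigon in $\Db$ together with a sub-arc of that component---which both gives reducedness and forces $|V_\Gamma\cap\Ib_k'|=|V_\Gamma\cap\Ib_k''|$ for each $k$ (equivalently, no arc in the strip $R_k$ can turn back to the same side). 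Hence $\Gamma\in C_{g,n}$ is reduced with $a(\Gamma)=a$.

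For the length equality, I would argue as follows. Each chord of $\Gamma$ is followed along $a$ by a single traversal of exactly one strip $R_k$, contributing one symbol $a_k^{\pm1}$ to a cyclic word representing the corresponding free homotopy class; concatenating these around each component of $a$ yields a total $\sigma$-word presentation of $[a]$ of length $r$, so $\ell_\sigma(a)\leq r$. For the reverse inequality, any realization of $a$ coming from a cyclically reduced $\sigma$-word of total length $s$ crosses $\Ib$ exactly $2s$ times (once per entry and once per exit of each strip traversal), so the minimal-position representative crosses $\Ib$ at most $2\ell_\sigma(a)$ times; since our representative crosses $\Ib$ exactly $2r$ times, we conclude $r\leq\ell_\sigma(a)$.

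For uniqueness, any other reduced $\Gamma'\in C_{g,n}$ with $a(\Gamma')=a$ furnishes a second representative of $a$ that is itself in minimal position relative to $\Ib$: reducedness of $\Gamma'$ rules out bigons inside $\Db$, while pairwise disjoint strip arcs (present by planarity) rule out bigons inside the $R_k$. The bigon criterion---that a multicurve has a unique minimal-position representative with respect to a system of arcs, up to isotopy through minimal positions---then forces $\Gamma=\Gamma'$ as chord diagrams, up to isotopy of the vertex set within $\Ib$. The main technical obstacle I anticipate is the length identity, which rests on identifying cyclically reduced $\sigma$-words with loops in minimal position with respect to $\Ib$; this in turn is a consequence of observing that cutting $S_{g,n}$ along $\Ib$ recovers $\Db$ together with the strips $R_k$, under which free reduction of $\sigma$-words corresponds precisely to bigon elimination. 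The remaining steps are essentially cut-and-paste arguments in $\Db$ and in the rectangles $R_k$, once this correspondence and the standard bigon criterion for surfaces with boundary are in hand.
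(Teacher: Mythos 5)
Your argument is correct and follows essentially the same route as the paper: both put the multicurve in minimal position relative to the arc system $\Ib$ (the paper phrases this as the observation that homotopies between loops built from reduced graphs can be chosen never to increase $|\Img a(t)\cap\Ib|$, which is exactly your bigon-criterion/minimal-position input), read off the reduced planar chord diagram from the arcs in $\Db$, and identify the number of edges with $\ell_\sigma$ via the correspondence between strip traversals and letters of a $\sigma$-word. The standard facts you leave to the bigon criterion are the same ones the paper asserts without detailed proof, so nothing essential is missing.
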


\begin{proof}
Observe that, if $\Pi,\Pi'\in D_{g,n}$ are reduced and there is a homotopy from $a(\Pi)$ to $a(\Pi')$ within $\Sigma_{g,n}$ (up to reparametrizations of the loops), then there is a homotopy $(a(t))_{t\in[0,1]}$ with $a(0)=a(\Pi)$ and $a(1)=a(\Pi')$ such that
$$|\Img a(t)\cap\Ib|\leq|\Img a(\Pi)\cap\Ib|$$
for all $t\in[0,1]$. Similarly, given $\Pi,\Pi'\in C_{g,n}$ reduced such that there is an isotopy from $a(\Pi)$ to $a(\Pi')$ within $\Sigma_{g,n}$, there is an isotopy $(a(t))_{t\in[0,1]}$ from $a(\Pi)$ to $a(\Pi')$ satisfying the above inequality and in particular we must have $\Pi=\Pi'$. Further, given a nondegenerate multicurve $a$ it is clear that there is a reduced $\Pi\in C_{g,n}$ such that $a(\Pi)$ is isotopic to $a$. This shows that the assignment
$$\{\Pi\in C_{g,n}:\text{$\Pi$ reduced}\}\to\{\text{isotopy classes of nondegenerate multicurves in $\Sigma_{g,n}$}\}$$
given by $\Pi\mapsto a(\Pi)$ is bijective, proving the first statement.

To prove the second statement, note first that if $\Pi\in D_{g,n}(r)$ for some $r\geq0$ then $\length_\sigma(a(\Pi))\leq r$. Thus, given a nondegenerate multicurve $a\subset \Sigma_{g,n}$ with $\length_\sigma(a)=r$, by our observation in the previous paragraph it suffices to show that there exists $\Pi\in D_{g,n}(r)$ such that $a(\Pi)$ is homotopic to $a$. In fact, we are reduced to the case where $a$ is a curve. But for a curve, this statement is obvious.
\end{proof}

 We prove the following refinement of Theorem \ref{cm}. Recall that $\Fil^\sigma$ is the $\sigma$-word length filtration on $R_{g,n}$.

\begin{lemma}
\label{basislem}
The regular functions $\tr_{a}$, as $a$ runs over the isotopy classes of nondegenerate multicurves in $\Sigma_{g,n}$ with $\length_\sigma(a)\leq r$, form a $\C$-linear basis of $\Fil_r^\sigma R_{g,n}$.
\end{lemma}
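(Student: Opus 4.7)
The plan is to combine Theorem~\ref{cm} of Charles-March\'e with a combinatorial induction on chord diagrams, using the $\SL_2$ trace identity $[xy] + [xy^{-1}] = [x][y]$ in $R_{g,n}$ as a skein relation. Write $V_r \subseteq R_{g,n}$ for the $\Cb$-span of $[a]$ over isotopy classes of multicurves $a \subset S_{g,n}$ with $\ell_\sigma(a) \le r$. Linear independence of the functions $[a]$ for $\ell_\sigma(a) \le r$ is immediate from Theorem~\ref{cm}, so the task reduces to proving the equality $V_r = \Fil_r^\sigma R_{g,n}$.

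First I would dispatch the inclusion $V_r \subseteq \Fil_r^\sigma R_{g,n}$. If $a = \coprod_i a_i$ is a multicurve with $\ell_\sigma(a) = \sum_i \ell_\sigma(a_i) \le r$, then $[a] = \prod_i [a_i]$ with each $[a_i] \in \Fil_{\ell_\sigma(a_i)}^\sigma R_{g,n}$; the filtered-algebra property $\Fil_s^\sigma \cdot \Fil_t^\sigma \subseteq \Fil_{s+t}^\sigma$ then yields $[a] \in \Fil_{\ell_\sigma(a)}^\sigma R_{g,n} \subseteq \Fil_r^\sigma R_{g,n}$.

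For the reverse inclusion, I prove the slightly more flexible statement that $\prod_i [b_i] \in V_r$ whenever $b_1,\dots,b_k \in \pi_1(S_{g,n})$ satisfy $\sum_i \ell_\sigma(b_i) \le r$; since $\Fil_r^\sigma R_{g,n}$ is spanned by traces $[b]$ with $\ell_\sigma(b) \le r$, this suffices. By the argument in the second paragraph of the proof of Lemma~\ref{f}, there is a chord diagram $\Gamma \in D_{g,n}(s)$ with $s = \sum_i \ell_\sigma(b_i)$ and $a(\Gamma) = \{b_1,\dots,b_k\}$. I induct on the crossing count $I(\Gamma)$. If $I(\Gamma) = 0$, then $\Gamma \in C_{g,n}(s)$ and $a(\Gamma)$ is an embedded collection of loops of total word length at most $s$; after discarding any null-homotopic components (each contributing the scalar $[\mathbf 1] = 2$), the remaining loops form a multicurve $a'$ with $\ell_\sigma(a') \le s \le r$, so $\prod_i [b_i] = [a(\Gamma)] = 2^j [a'] \in V_r$ for some $j \ge 0$.

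If $I(\Gamma) \ge 1$, I pick a crossing between two chords $e_i, e_j$ of $\Gamma$ and form the two chord diagrams $\Gamma_A, \Gamma_B \in D_{g,n}(s)$ obtained by replacing $\{e_i, e_j\}$ with each of the two non-crossing chord pairs on the same four endpoints. The chosen crossing is either a self-intersection of a single loop in $a(\Gamma)$ or a mutual intersection of two distinct loops; writing the relevant loop as $xy$ (resp.\ the two loops as $x$ and $y$) based at the intersection point, the skein identity $[xy] + [xy^{-1}] = [x][y]$ yields respectively $[a(\Gamma)] = [a(\Gamma_A)] - [a(\Gamma_B)]$ or $[a(\Gamma)] = [a(\Gamma_A)] + [a(\Gamma_B)]$ (after multiplying through by the trace of any unaffected loops). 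Labeling the four open arcs $A_1, A_2, A_3, A_4$ of $\partial\Db$ cut off by the endpoints $u_1, v_1, u_2, v_2$ of $e_i, e_j$ in cyclic order, one checks by a short enumeration over the pair of arcs containing the endpoints of any third chord that both smoothings strictly decrease the crossing count: $I(\Gamma_A), I(\Gamma_B) \le I(\Gamma) - 1$. The induction hypothesis places $[a(\Gamma_A)], [a(\Gamma_B)] \in V_r$, whence $[a(\Gamma)] \in V_r$.

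The main obstacle is the bookkeeping at the smoothed crossing: one must identify the decomposition $b = xy$ (self-intersection case) or the loops $x, y$ (mutual intersection case) so that the two algebraic summands $[x][y]$ and $[xy^{-1}]$ match the two geometric smoothings $\Gamma_A, \Gamma_B$ with the correct signs. Once this topological-to-algebraic dictionary is pinned down, the crossing-count estimate is a routine arc-by-arc case check and the induction closes cleanly.
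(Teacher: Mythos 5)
Your proposal is correct and follows essentially the same route as the paper: linear independence via Theorem \ref{cm}, and spanning by reducing to chord diagrams in $D_{g,n}$ and inducting on the crossing number, resolving one crossing at a time with the relation $[x][y]=[xy]+[xy^{-1}]$ based at the intersection point. Your extra bookkeeping (explicit signs for the self- versus mutual-intersection cases, the factor $[\mathbf 1]=2$ for null-homotopic components at the crossing-free base case, and the arc-by-arc check that both smoothings drop $I(\Gamma)$) only makes explicit what the paper's proof of Lemma \ref{basislem} asserts more tersely.
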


\begin{proof}
By Theorem \ref{cm}, the collection $B_r\subset\Fil_r^\sigma$ of functions $\tr_{a}$, as $a$ runs over the isotopy classes of nondegenerate multicurves in $\Sigma_{g,n}$ with $\length_\sigma(a)\leq r$, is linearly independent over $\C$. Therefore, it remains to show that $B_r$ spans $\Fil_r^\sigma$. Given any $a\in\pi_1(\Sigma_{g,n})$ with $\length_\sigma(a)\leq r$, it is easy to see that there exists $\Pi\in D_{g,n}(s)$ with $s\leq r$ such that $a=a(\Pi)$. Thus, we are reduced to showing that $\tr_{a(\Pi)}\in\Span B_r$ for each $\Pi\in D_{g,n}(r)$.

So let $\Pi\in D_{g,n}(r)$ be given. We shall proceed by induction on the intersection number $I(\Pi)$. Let us choose the vertex positions for $\Pi$ so that each of the $I(\Pi)$ intersection points lies on exactly two of the chords. If $I(\Pi)=0$, then $\tr_{a(\Pi)}\in B_r$ and we are done. Suppose next that $I(\Pi)\geq1$, and let $e=\{v_1,v_2\}$ and $e'=\{v_1',v_2'\}$ be two edges of $\Pi$ intersecting at a point $p$. From this we can construct two graphs $\Pi',\Pi''\in D_{g,n}(r)$ as follows.
\begin{itemize}
	\item $V_{\Pi'}=V_{\Pi''}=V_{\Pi}$, and
	\item we have
	\begin{align*}
	&E_{\Pi'}=(E_{\Pi}\setminus\{e,e'\})\cup\{\{v_1,v_1'\},\{v_1',v_2'\}\},\\
	&E_{\Pi''}=(E_{\Pi}\setminus\{e,e'\})\cup\{\{v_1,v_2'\},\{v_1',v_2\}\}.
	\end{align*}
\end{itemize}
The fact that $\Pi,\Pi''\in D_{g,n}(r)$ implies that $\tr_{a(\Pi')},\tr_{a(\Pi'')}\in\Fil_r^\sigma$. We verify easily that $I(\Pi'),I(\Pi'')<I(\Pi)$, so by the inductive hypothesis $\tr_{a(\Pi')},\tr_{a(\Pi'')}\in\Span B_r$. But note that
$$\tr_{a(\Pi)}=\epsilon'\tr_{a(\Pi')}+\epsilon''\tr_{a(\Pi'')}$$
for some $\epsilon',\epsilon''\in\{\pm1\}$, as easily seen by the condition that $\tr_{a}\tr_{b}=\tr_{ab}+\tr_{ab^{-1}}$ in $R_{g,n}$ (applied after changing our base point to the intersection point $p$). We thus have $\tr_{a(\Pi)}\in\Span B_r$, completing the induction.
\end{proof}

\subsection{Proof of Theorem \ref{mtheorem2}} \label{sect:5.3}
Let $\Sigma_{g,n}$ be a surface of genus $g$ with $n\geq1$ boundary curves satisfying $2g+n-2>0$. We fix a standard presentation of $\pi_1(\Sigma_{g,n})$ and its standard set of free generators as in Section \ref{sect:5.2}. For $r\geq0$, let $c'_{g,n}(r)$ be the number of isotopy classes of nondgenerate multicurves $a\subset \Sigma_{g,n}$ with $\length_\sigma(a)=r$, and let $c_{g,n}(r)$ be the number of isotopy classes of essential multicurves $a\subset \Sigma_{g,n}$ with $\length_\sigma(a)=r$. Note that we have
$$(1-t)^{n-1}(1-t^{4g+n-1})\sum_{r=0}^\infty c_{g,n}'(r)t^r=\sum_{r=0}^\infty c_{g,n}(r)t^r.$$
Indeed, the $n$ boundary components of $\Sigma_{g,n}$ have lengths $1,\dots,1$, and $4g+n-1$, and any nondegenerate multicurve $a'$ in $\Sigma_{g,n}$ can be written uniquely as a disjoint union $a'=a\coprod a''$ in $\Sigma_{g,n}$ where $a$ is an essential multicurve and $a''$ is a finite disjoint union of curves each of which is isotopic to a boundary component of $\Sigma_{g,n}$. We are ready to prove Theorem \ref{mtheorem2}, restated below.

\begin{theorem}
The series $Z_{g,n}(t)=\sum_{r=0}^\infty c_{g,n}(r)t^r$ is a rational function, and
$$Z_{g,n}(1/t)=Z_{g,n}(t).$$
\end{theorem}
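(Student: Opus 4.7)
The plan is to express $Z_{g,n}(t)$ in closed form in terms of the Hilbert series $H_m(t)$ of the Rees algebra $R_m^\sigma$ studied in Section~2, and then to read off both rationality and the functional symmetry from Theorem~\ref{hilbert}.

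First, I will translate the algebraic object $\Fil_r^\sigma R_{g,n}$ into a multicurve count. Under the standard free generators $\sigma = \{a_1,\dots,a_m\}$ with $m=2g+n-1$, the paper identifies $R_{g,n} \simeq R_m$ and the filtrations match, so the Hilbert series of the Rees algebra of $R_{g,n}$ equals $H_m(t)$. By Lemma~\ref{basislem}, the dimension $\dim_\Cb \Fil_r^\sigma R_{g,n}$ is exactly the number of isotopy classes of multicurves $a \subset S_{g,n}$ with $\ell_\sigma(a) \leq r$. Summing over $r$ and interchanging the order of summation gives
$$H_m(t) \;=\; \sum_{r\geq0}\Big(\sum_{s\leq r} c'_{g,n}(s)\Big)\, t^r \;=\; \frac{1}{1-t}\sum_{s\geq0} c'_{g,n}(s)\, t^s,$$
where $c'_{g,n}(s)$ counts \emph{all} (possibly peripheral) multicurves of length $s$.

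Next, I will use the displayed relation from the excerpt, $(1-t)^{n-1}(1-t^{4g+n-1})\sum c'_{g,n}(r)t^r = Z_{g,n}(t)$, which separates out the $n-1$ boundary-parallel curves of length $1$ and the single one of length $4g+n-1$. Combining with the previous display yields the closed form
$$Z_{g,n}(t) \;=\; P(t)\, H_m(t), \qquad P(t):=(1-t)^{n}(1-t^{4g+n-1}).$$
Rationality of $Z_{g,n}(t)$ then follows at once from Theorem~\ref{hilbert}.

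Finally, for the symmetry, I will compute how each factor transforms under $t \mapsto 1/t$. A direct calculation gives $P(1/t) = (-1)^{n+1} t^{-(4g+2n-1)} P(t)$. Theorem~\ref{hilbert}, with $m=2g+n-1$ so that $2m+1 = 4g+2n-1$ and $(-1)^{3m-2} = (-1)^{6g+3n-5} = (-1)^{n+1}$, gives $H_m(1/t) = (-1)^{n+1} t^{4g+2n-1} H_m(t)$. Multiplying the two identities, the signs and the powers of $t$ cancel exactly:
$$Z_{g,n}(1/t) \;=\; P(1/t)\, H_m(1/t) \;=\; P(t)\, H_m(t) \;=\; Z_{g,n}(t).$$

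There is no real obstacle here once one has Theorem~\ref{hilbert} and the Charles-March\'e basis (via Lemma~\ref{basislem}); the substantive work has already been done in Sections~2--4. The only step requiring care is the bookkeeping of the two sign contributions and of the exponent $4g+2n-1$, which must match between $P(1/t)$ and $H_m(1/t)$; this is the single place where the particular word lengths $1,\dots,1,4g+n-1$ of the peripheral classes enter and make the identity work out cleanly.
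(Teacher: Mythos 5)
Your proposal is correct and follows essentially the same route as the paper: both express $Z_{g,n}(t)=(1-t)^n(1-t^{4g+n-1})H_m(t)$ via Lemma~\ref{basislem} and the peripheral-curve factor, and then deduce rationality and the symmetry from the functional equation of Theorem~\ref{hilbert}, with the same sign and exponent bookkeeping ($2m+1=4g+2n-1$ and $(-1)^{3m-2}=(-1)^{n+1}$).
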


\begin{proof}
Note that $R_{g,n}^\sigma\simeq R_m^\sigma$ where $m=2g+n-1$ and $R_m$ is the coordinate ring of the character variety $X_m$ of the free group on $m$ generators $\sigma$. By Lemma \ref{basislem}, its Hilbert series $H_m(t)=\sum_{r=0}^\infty(\dim\Fil_r^\sigma)t^r$ is given by
$$H_m(t)=\frac{1}{1-t}\sum_{r=0}^\infty c_{g,n}'(r)t^r=\frac{Z_{g,n}(t)}{(1-t)^n(1-t^{4g+n-1})}.$$
By Theorem \ref{hilbert}, we have $H_m(1/t)=(-1)^{3m-2}t^{2m+1}H_m(t)$. We then have
\begin{align*}
Z_{g,n}(1/t)&=(1-t^{-1})^n(1-t^{-({4g+n-1})}) H_m(1/t)\\
&=(-1)^{3m-2}(1-t^{-1})^{n}(1-t^{-(4g+n-1)}) t^{2m+1} H_m(t)\\
&=(-1)^{3m-2+(n+1)}(1-t)^n(1-t^{4g+n-1})H_m(t)=(-1)^{3m-2+(n+1)}Z_{g,n}(t)
\end{align*}
noting that $2m+1=2(2g+n-1)+1=4g+2n-1$. But lastly, we note that $3m-2+(n+1)=3(2g+n-1)-2+(n+1)$ is even, and hence $Z_{g,n}(1/t)=Z_{g,n}(t)$, as desired.
\end{proof}

\begin{remark}
As the above proof shows, given $m\geq2$ the functional equation in Theorem \ref{hilbert} is equivalent to Theorem \ref{mtheorem2} for any surface $\Sigma_{g,n}$ with $m=2g+n-1$. In the next section, we shall give an independent combinatorial proof of Theorem \ref{mtheorem2} for the surface $\Sigma_{0,m+1}$, which then implies Theorem \ref{mtheorem2} in the general case by this observation.
\end{remark}

\section{Combinatorics of planar graphs} \label{sect:6}

Let $m\geq2$ be a fixed integer. Let $\Sigma_m=\Sigma_{0,m+1}$ be a compact oriented surface of genus $0$ with $m+1$ boundary components. Under the standard presentation of the fundamental group, the standard set $\sigma=\{a_1,\dots,a_m\}$ freely generates $\pi_1(\Sigma_m)\simeq F_m$. As in Section \ref{sect:5}, let us consider the generating series
$$Z_m(t)=Z_{0,m+1}(t)=\sum_{r=0}^\infty c_m(r)t^r$$
where $c_m(r)=c_{0,m+1}(r)$ denotes the number of essential multicurves $a\subset \Sigma_m$ with $\length_\sigma(a)=r$. In this section, we give a combinatorial proof of the following:

\begin{theorem}
The series $Z_m(t)$ is rational, and satisfies $Z_m(1/t)=Z_m(t)$.
\end{theorem}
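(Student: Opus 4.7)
The plan is to give a direct combinatorial proof, working entirely within the chord diagram parameterization of Section 5. By Lemma~$\ref{f}$, $c_m(r)$ equals the number of reduced planar (non-crossing) chord diagrams $\Gamma \in C_{0, m+1}(r)$ whose associated multicurve $a(\Gamma)$ is non-peripheral. The two statements to establish are then: (i) the generating function $Z_m(t)$ of such $\Gamma$, weighted by number of chords, is a rational function; and (ii) it satisfies the palindromic symmetry $Z_m(1/t) = Z_m(t)$. Extracting the non-peripheral condition in the chord-diagram language requires a preliminary observation: a reduced $\Gamma$ produces a boundary-parallel component for one of the first $m$ punctures only via a very restricted local configuration near a single $\Ib_k$, while the curve around the $(m+1)$-st puncture of length $m$ corresponds to a specific ``outer fan'' of $m$ chords. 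Both conditions can thus be translated into excluded local patterns on $\Gamma$.

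For rationality, I would stratify reduced non-crossing balanced chord diagrams by their \emph{boundary profile} on the arcs $\Ib_1, \ldots, \Ib_m$, i.e.\ the ordered sequence of chord endpoints lying on each sub-arc $\Ib_k'$ and $\Ib_k''$. Peeling off the chord incident to a distinguished outermost vertex (say, the counterclockwise-first endpoint on $\Ib_1'$) splits $\Db$ into two smaller disks, each carrying a smaller reduced non-crossing balanced chord diagram with a prescribed boundary profile. After pruning away configurations that create peripheral components, this produces a finite linear system of functional equations, with rational coefficients in $t$, among generating functions indexed by profile types. The finiteness of the state space forces $Z_m(t)$ to be rational.

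The symmetry $Z_m(1/t) = Z_m(t)$ is the main step. The plan is to exhibit an explicit combinatorial involution on the set of reduced non-crossing balanced chord diagrams representing non-peripheral multicurves, compatible with the palindromic identity. One natural candidate uses planar duality: a chord diagram $\Gamma$ with $r$ chords cuts $\Db$ into $r+1$ faces, each bounded by chords and by arcs of $\del \Db$, and the configuration of the boundary arcs $\del \Db \setminus V_\Gamma$ inside these faces carries enough structure to define a dual diagram $\Gamma^\vee$ whose chord count is controlled by that of $\Gamma$ together with the face/edge statistics. The main obstacle will be verifying that the proposed dual operation $\Gamma \mapsto \Gamma^\vee$ preserves reducedness, balance, and non-peripherality, and produces exactly the chord-count behavior required for $Z_m(1/t) = Z_m(t)$; here the punctured-sphere topology must be used essentially, since the analogous symmetry fails before imposing non-peripherality. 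If a clean involution proves elusive, a fallback route is to solve the recursion from the rationality step explicitly, obtaining $Z_m(t) = P_m(t)/Q_m(t)$ with $Q_m$ a product of cyclotomic-type factors, and then verify the palindromicity of $P_m$ and $Q_m$ by induction on $m$.
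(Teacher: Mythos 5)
There is a genuine gap, and it sits exactly where the real content of the theorem lies. The identity $Z_m(1/t)=Z_m(t)$ is a functional equation for a rational function with infinitely many nonzero coefficients; it is not coefficientwise palindromicity, so no involution on the set of reduced non-crossing diagrams (equivalently, non-peripheral multicurves) can establish it directly: there is nothing for a length-$r$ object to be paired with. Any combinatorial proof must first strip away the features that can be repeated arbitrarily often --- in the paper these are bigons (parallel copies of a curve) and components parallel to the $(m+1)$-st boundary --- so as to reduce to a \emph{finite} family of cores (after collapsing each $\Ib_k$ to the vertex $p_k$, the simple planar graphs on $\mu_m$ with all degrees even), and then prove two things: (a) the local generating function attached to each core is palindromic of a controlled degree, and (b) an involution on the finite set of cores (in the paper, toggling the edges between contiguous vertices, $\Gamma\mapsto\Gamma^\vee$) satisfies an exact degree identity, namely $d(\Gamma)+e(\Gamma)+e(\Gamma^\vee)=3m-6$. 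Your proposal supplies neither the finite reduction nor (a) nor (b). Ingredient (a) is the genuinely nontrivial input: in the paper it rests on the fact that the alternating count of non-crossing graphs on $\mu_m$ with no edge between contiguous vertices computes the Euler characteristic of the boundary complex of the associahedron (Lemma \ref{aslem}, via Lee's theorem), which produces the palindromic numerator $f_m$ of Proposition \ref{fprop}; nothing in your sketch plays this role, and the fallback of ``verifying palindromicity of $P_m$ and $Q_m$ by induction on $m$'' simply restates the problem. Incidentally, the remark that the symmetry fails before imposing non-peripherality is not accurate: the series allowing boundary-parallel components still satisfies a functional equation, only with an extra sign and power of $t$ (cf.\ Theorem \ref{gth}, where $G_m(1/t)=-t^mG_m(t)$); non-peripherality merely normalizes these factors to $1$.

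The rationality step is also not yet a proof as written. Stratifying by the full boundary profile (the ordered sequences of endpoints on the arcs $\Ib_k'$, $\Ib_k''$) gives an \emph{infinite} index set, since the number of endpoints is unbounded, so the asserted ``finite linear system'' does not exist without a further idea that compresses profiles to bounded data. In the paper both rationality and the symmetry fall out of one and the same decomposition: stripping bigons expresses $G_m(t)$ as a finite sum $\sum_{\Gamma\in B_m^s}t^{e(\Gamma)}g_\Gamma(t)/(1-t^2)^{2m-3}$, from which rationality is immediate and on which the duality $\Gamma\mapsto\Gamma^\vee$ acts. If you pursue your plan, the two halves should be unified in this way rather than argued separately; as it stands, each half is missing its key mechanism.
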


We adopt the notations of Section \ref{sect:5}. In particular, $\Sigma_m$ is viewed as the closed unit disk $\Db$ with $2m$ rectangular strips suitably attached to the boundary $\del\Db$ along the $2m$ intervals $\Ib_1,\Ib_1',\dots,\Ib_m,\Ib_m'$. Let us write $D_m(r)=D_{0,m+1}(r)$ and $D_m=D_{0,m+1}$, and similarly $C_m(r)=C_{0,m+1}(r)$ and $C_m=C_{0,m+1}$.

Let $P_m=\{p_1,\dots,p_m\}$ be a collection of $m$ distinct points on $\del\Db$, with each $p_k$ lying on the component of $\del\Db\setminus\Ib$ between $\Ib_k$ and $\Ib_k'$. Given a reduced graph $\Pi\in C_m(r)$, note that one can identify the vertices in each $\Ib_k\cup\Ib_k'$ to a single vertex $p_k$, so as to obtain a planar multigraph in $\Db$ (except for self-loops which we draw outside of $\Db$) with vertices $P_m=\{p_1,\dots, p_m\}\subset\del\Db$ and $r$ edges such that every vertex has even degree. (See Figure \ref{fig3} for an example.) Conversely, given such a planar multigraph, we can construct a unique reduced graph in $C_m(r)$ that gives rise to it. Let $B_m(r)$ denote the set of planar multigraphs $\Pi$ without self-loops in $\Db$ having vertex set $P_m$ and $r$ edges such that every vertex in $\Pi$ has even degree. In light of Lemma \ref{f} and our discussion, we have
$$Z_m(t)=(1-t^m)\sum_{r=0}^\infty|B_m(r)|t^r.$$
Here, the factor $(1-t^m)$ is to cancel from our count the contribution of the $(m+1)$th boundary component of $\Sigma_m$, which has $\sigma$-length $m$.

\begin{figure}[ht]
    \centering
    \includegraphics{./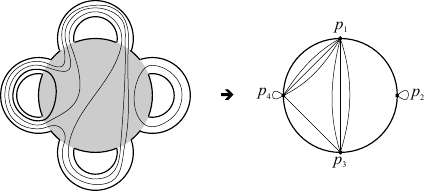}
    \caption{A multicurve in $\Sigma_4$ and associated planar multigraph}
    \label{fig3}
\end{figure}

Let us say that two points $p_i,p_j\in P_m$ are \emph{contiguous} if $i-j\in\{ 0,\pm1\}\modulo m$. For each integer $r\geq0$, let $A_m(r)$ denote the collection of planar multigraphs in $\Db$ with $r$ edges on the $m$ vertices $P_m$, such that no edge joins contiguous vertices. Let $A_m^s(r)$ be the set of multigraphs in $A_m(r)$ which are simple (i.e.~having no multiple edges). Note that we have $|A_m^s(r)|=0$ for $r>m-3$. Let $A_m=\bigcup_{r=0}^\infty A_m(r)$ and $A_m^s=\bigcup_{r=0}^\infty A_m^s(r)$. Define the generating series
$$F_m(t)=\sum_{r=0}^\infty |A_m(r)|t^r.$$

\begin{lemma}
\label{aslem}
We have $\sum_{r=1}^{m-3}(-1)^{r-1}|A_m^s(r)|=1+(-1)^{m-4}$.
\end{lemma}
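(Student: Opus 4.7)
First, I observe that by definition $A_m^s(r)$ is precisely the number of $r$-element collections of pairwise non-crossing \emph{diagonals} of the convex $m$-gon with vertex set $\mu_m$, where a diagonal means a chord joining two non-contiguous vertices of $\mu_m$. These collections assemble into an abstract simplicial complex $\Delta_m$ whose vertex set is the set of diagonals and whose simplices are the non-crossing collections, so that its face numbers satisfy $f_{r-1}=|A_m^s(r)|$ for $r\geq 1$. The desired quantity is therefore the (unreduced) Euler characteristic of $\Delta_m$:
$$\sum_{r=1}^{m-3}(-1)^{r-1}|A_m^s(r)|=\sum_{i\geq 0}(-1)^if_i=\chi(\Delta_m).$$

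Second, I will invoke the classical fact that $\Delta_m$ is a triangulation of the sphere $S^{m-4}$. One clean route is through the Stasheff associahedron for the $m$-gon, a simple convex polytope of dimension $m-3$ whose face lattice is anti-isomorphic to the poset of dissections of the $m$-gon ordered by refinement, with polytope vertices corresponding to triangulations and polytope facets corresponding to individual diagonals. The boundary of this $(m-3)$-polytope is a polytopal $(m-4)$-sphere, and dualizing its face structure returns precisely the simplicial complex $\Delta_m$. In particular $\Delta_m$ is homeomorphic to $S^{m-4}$.

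Combining these two steps yields $\chi(\Delta_m)=\chi(S^{m-4})=1+(-1)^{m-4}$, which is the desired identity. The main obstacle to a fully self-contained treatment is justifying the sphericity of $\Delta_m$; for an elementary alternative one could instead argue by induction on $m$, splitting the sum according to whether a chosen vertex $p_1$ is incident to any diagonal of the dissection and using the fact that a diagonal at $p_1$ cuts the $m$-gon into two smaller polygons, so as to produce a recursion on the alternating sum that reduces to the base cases $m=3,4$ (where the sum is seen to equal $0$ and $2$ respectively).
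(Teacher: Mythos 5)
Your argument is correct and is essentially the paper's own proof: the paper also reads the alternating sum as the Euler characteristic of the simplicial complex of non-crossing diagonal sets and identifies that complex, via Lee's associahedron theorem, with the boundary complex of an $(m-3)$-dimensional convex polytope, hence an $(m-4)$-sphere with $\chi=1+(-1)^{m-4}$. The only cosmetic difference is that the paper disposes of $m\le 4$ by inspection rather than through degenerate sphere conventions, exactly as you suggest in your closing remark.
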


\begin{proof}
The statement is obvious for $m\leq4$, and hence we treat the case $m\geq5$. We construct a simplicial complex $\Lb_m$ as follows. For each $r\geq0$, the $r$-simplices of $\Lb_m$ are labeled by $A_m^s(r+1)$; we let $\Delta(\Pi)$ denote the $r$-simplex associated to $\Pi\in A_m^s(r+1)$. Whenever a graph $\Pi\in A_m(r+1)$ is obtained by deleting a number of edges from some $\Pi'\in A_m(r'+1)$ with $r<r'$, we glue the corresponding $r$-simplex $\Delta(\Pi)$ to the $r'$-simplex $\Delta(\Pi')$ compatibly. From this construction, the lemma is just the statement
$$\chi(\Lb_m)=1+(-1)^{m-4}$$
where the left hand side is the Euler characteristic of $\Lb_m$. In fact, it is a relatively well known result (first published by Lee \cite[Theorem 1]{lee}, also found independently by Haiman (cf.~Lee \emph{loc.cit.})) that $\Lb_m$ is isomorphic to the boundary complex of a convex polytope of dimension $m-3$, and therefore the following stronger result holds:
$$H_0(\Lb_m,\Z)=H_{m-4}(\Lb_m,\Z)=\Z,\quad\text{and}\quad H_r(\Lb_m,\Z)=0\quad\forall 1\leq r<m-4.$$
This finishes the proof of the lemma.
\end{proof}

Let $r_0\leq m-3$ be a nonnegative integer, and let $\Pi\in A_m^s(r_0)$. For $r_0\leq r\leq m-3$, let us define $A_m^s(r,\Pi)=\{\Pi'\in A_m^s(r):\Pi\subseteq\Pi'\}$. Here, the containment $\Pi\subseteq\Pi'$ means that the edge set of $\Pi'$ contains the edge set of $\Pi$. Both graphs have the same vertex set, namely $P_m$.

\begin{corollary}
\label{altsumcor}
Let $r_0\leq m-3$ be a nonnegative integer, and $\Pi\in A_m^s(r_0)$. Then
$$\sum_{r=r_0}^{m-3}(-1)^{r-r_0}|A_m^s(r,\Pi)|=(-1)^{m-3-r_0}.$$
\end{corollary}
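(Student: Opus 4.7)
The plan is to count $|A_m^s(r, \Gamma)|$ by decomposing across the interior faces of $\Gamma \cup \del\Db$ in $\Db$, reducing the alternating sum to a product of alternating sums already handled by Lemma \ref{aslem}.

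First I would analyze the face structure of $\Gamma \cup \del\Db$. This planar graph in $\Db$ has $m$ vertices and $r_0 + m$ edges ($r_0$ chords and $m$ arcs of $\del\Db$), so Euler's formula gives $r_0 + 1$ interior faces. Each interior face $F$ is convex: any straight segment between two points of $F$ stays in $F$, because it cannot cross $\del\Db$ (the segment lies in the convex disk and its interior lies strictly inside $\Db$) and cannot cross a chord of $\Gamma$ bounding $F$ (both endpoints lie on the $F$-side). The boundary $\del F$ is a simple polygon with $k_F \geq 3$ vertices in $\mu_m$ and $k_F$ edges, each edge either a chord of $\Gamma$ or an arc of $\del\Db$; convexity implies the cyclic order of vertices along $\del F$ matches their cyclic order on $\del\Db$. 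A double count of edge-face incidences gives $\sum_F k_F = 2r_0 + m$.

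Next, for any $\Gamma' \in A_m^s(r, \Gamma)$, planarity forces each chord of $\Gamma' \setminus \Gamma$ to lie in a single interior face of $\Gamma$, and the restriction of $\Gamma' \setminus \Gamma$ to each face $F$ is a non-crossing collection of chords between vertices of $\del F$. Using the matching of cyclic orders, one checks that the allowed added chords inside $F$ are exactly the diagonals of $\del F$ connecting vertices that are non-adjacent on $\del F$ (such a diagonal lies in $\overline F$ by convexity and has no interleaving with the chord-edges of $\del F$), and that non-crossing diagonal collections correspond bijectively to non-crossing chord sets in $\Db$. Writing $r_F$ for the number of added chords in $F$, this identifies the configurations in $F$ bijectively with $A_{k_F}^s(r_F)$, where the $k_F$-gon structure is that of $\del F$.

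This yields the multiplicative identity
\begin{equation*}
|A_m^s(r, \Gamma)| = \sum_{\substack{(r_F)_F \\ \sum_F r_F = r - r_0}} \prod_F |A_{k_F}^s(r_F)|,
\end{equation*}
from which the alternating sum factors as
\begin{equation*}
\sum_{r=r_0}^{m-3} (-1)^{r-r_0}\, |A_m^s(r, \Gamma)| = \prod_F \sum_{r_F = 0}^{k_F - 3} (-1)^{r_F}\, |A_{k_F}^s(r_F)|.
\end{equation*}
Each factor evaluates, by Lemma \ref{aslem} together with $|A_{k_F}^s(0)| = 1$, to $1 - \bigl(1 + (-1)^{k_F - 4}\bigr) = (-1)^{k_F - 3}$. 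The product is then $(-1)^{\sum_F(k_F - 3)} = (-1)^{2r_0 + m - 3(r_0 + 1)} = (-1)^{m - r_0 - 3}$, as desired.

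The main obstacle is the bijective identification between added-chord configurations inside each face and elements of $A_{k_F}^s$, in particular confirming that the contiguity conditions on $\mu_m$ (defining $A_m^s$) match the contiguity conditions on $\del F$ (defining $A_{k_F}^s$). The crucial observation is that the arc of $\del\Db$ between any two contiguous $\mu_m$-vertices is a single edge of $\Gamma \cup \del\Db$ (no chord endpoint lies between them) and hence lies on the boundary of exactly one interior face; coupled with convexity, this forces contiguous vertices that share a face to be adjacent on that face's boundary via the arc.
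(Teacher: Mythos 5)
Your proposal is correct and follows essentially the same route as the paper: decompose $\Db$ into the $r_0+1$ polygonal faces cut out by $\Gamma$ (with arcs between contiguous vertices counted as sides, so that $\sum_F(k_F-3)=m-3-r_0$), factor the alternating sum over these faces, and evaluate each factor as $(-1)^{k_F-3}$ via Lemma \ref{aslem}. You merely make explicit several verifications (convexity of faces, the matching of contiguity/adjacency conditions, and the face-wise bijection) that the paper's proof leaves implicit.
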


\begin{proof}
Note that $\Pi$ divides $\Db$ into a number of polygons with numbers of sides $m_1,\dots,m_{r_0+1}\geq3$ such that $\sum_{k=1}^{r_0+1}(m_k-3)+r_0=m-3$. Here, we are counting each arc of $\del\Db$ joining two contiguous vertices also as sides of a polygon. Using this and Lemma \ref{aslem}, we find
\begin{align*}
\sum_{r=r_0}^{m-3}(-1)^{r-r_0}|A_m^s(r,\Pi)|&=\prod_{k=1}^{r_0+1}\left\{1-\sum_{r=1}^{m_k-3}(-1)^{r-1}|A_{m_k}(r)|\right\}\\
&=\prod_{k=1}^{r_0+1}(-1)^{m_k-3}=(-1)^{m-3-r_0},
\end{align*}
which is the desired result.
\end{proof}

\begin{proposition}
\label{fprop}
The generating series $F_m(t)=\sum_{r=0}^\infty|A_m(r)|t^r$ satisfies
$$F_m(t)=\frac{f_m(t)}{(1-t)^{m-3}}$$
where $f_m(t)$ is a polynomial of degree $m-3$ such that $t^{m-3}f_m(1/t)=f_m(t)$.
\end{proposition}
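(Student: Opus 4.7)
The plan is to expand $|A_m(r)|$ in terms of the simple-graph counts $a_s := |A_m^s(s)|$, sum the resulting series to identify $f_m(t)$ explicitly, and then deduce its palindromicity directly from Corollary \ref{altsumcor} via a double-counting argument.

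First, I would decompose each $\Gamma \in A_m(r)$ according to its underlying simple graph $\Gamma_0 \in A_m^s(s)$ (with $0 \le s \le \min(r,m-3)$): once $\Gamma_0$ is fixed, $\Gamma$ is determined by a positive edge-multiplicity function on the $s$ edges of $\Gamma_0$ summing to $r$. Since there are $\binom{r-1}{s-1}$ such assignments, with the conventions $a_0 = 1$ and $\binom{-1}{-1} = 1$ I would obtain
$$|A_m(r)| = \sum_{s=0}^{m-3} a_s \binom{r-1}{s-1}, \qquad r \ge 0.$$
Summing the identity $\sum_{r \ge s} \binom{r-1}{s-1} t^r = t^s/(1-t)^s$ then gives
$$F_m(t) = \sum_{s=0}^{m-3} \frac{a_s\, t^s}{(1-t)^s} = \frac{f_m(t)}{(1-t)^{m-3}}, \qquad f_m(t) := \sum_{s=0}^{m-3} a_s\, t^s (1-t)^{m-3-s}.$$
The coefficient of $t^{m-3}$ in $f_m(t)$ is $\sum_s (-1)^{m-3-s} a_s$, which by Corollary \ref{altsumcor} applied to $\Gamma = \emptyset$ equals $(-1)^{m-3}\cdot(-1)^{m-3} = 1$; hence $\deg f_m = m-3$.

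For the palindromicity, a direct calculation yields
$$t^{m-3}\, f_m(1/t) = \sum_{s=0}^{m-3} (-1)^{m-3-s}\, a_s\, (1-t)^{m-3-s}.$$
I would then expand $t^s = \sum_j \binom{s}{j}(-1)^j (1-t)^j$ inside $f_m(t)$ and compare both expressions in the basis $\{(1-t)^{m-3-r_0}\}_{0 \le r_0 \le m-3}$ of polynomials of degree $\le m-3$. Matching coefficients of $(1-t)^{m-3-r_0}$ reduces the palindromicity to the family of identities
$$(-1)^{m-3-r_0}\, a_{r_0} = \sum_{s=r_0}^{m-3} (-1)^{s-r_0}\, a_s \binom{s}{r_0}, \qquad 0 \le r_0 \le m-3.$$

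To prove each of these identities, I would sum the identity of Corollary \ref{altsumcor} over all $\Gamma \in A_m^s(r_0)$: the right-hand side becomes $(-1)^{m-3-r_0}\, a_{r_0}$, while the left-hand side, after exchanging the order of summation, involves $\sum_{\Gamma \in A_m^s(r_0)} |A_m^s(r,\Gamma)|$. The latter counts pairs $(\Gamma, \Gamma')$ with $\Gamma \subseteq \Gamma'$, $\Gamma \in A_m^s(r_0)$, $\Gamma' \in A_m^s(r)$; enumerating such pairs instead by first choosing $\Gamma' \in A_m^s(r)$ and then an $r_0$-subset of its edges (any such subset is automatically a non-crossing dissection of the $m$-gon, hence lies in $A_m^s(r_0)$) gives $\sum_\Gamma |A_m^s(r,\Gamma)| = a_r \binom{r}{r_0}$, yielding exactly the required identity. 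The main obstacle I anticipate is the bookkeeping in the basis change from $t^s$ to $(1-t)^j$ and in tracking signs; the combinatorial input itself---Corollary \ref{altsumcor} combined with this double-counting---is quite clean.
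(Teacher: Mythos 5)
Your proposal is correct and follows essentially the same route as the paper: the same decomposition by underlying simple graph yields $f_m(t)=\sum_{s=0}^{m-3}|A_m^s(s)|\,t^s(1-t)^{m-3-s}$, and the symmetry is again extracted from Corollary \ref{altsumcor}, your target identity $t^{m-3}f_m(1/t)=\sum_{s}(-1)^{m-3-s}|A_m^s(s)|(1-t)^{m-3-s}$ being precisely the paper's alternative expression $f_m(t)=\sum_{r}|A_m^s(r)|(t-1)^{m-3-r}$. The differences are only organizational and both are fine: you prove that identity by matching coefficients in the $(1-t)$-basis and a double count of pairs $\Gamma\subseteq\Gamma'$ (summing the corollary over all $\Gamma\in A_m^s(r_0)$), whereas the paper resums $F_m(t)$ by inclusion-exclusion applying the corollary to each fixed $\Gamma$; and you obtain the degree from the leading coefficient (the $r_0=0$ case of the corollary) rather than from $f_m(0)=1$ combined with the symmetry.
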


\begin{proof}
For each multigraph $\Pi\in A_m(r)$, there is a unique simple graph $\Pi^s\in A_m^s$ that has the same adjacent vertices as $\Pi$ (i.e.~vertices joined by an edge in $\Pi$ are still joined in $\Pi^s$). Note that $\Pi$ is obtained from $\Pi^s$ by adding edges to adjacent vertices in $\Pi^s$. Thus, we have
$$F_m(t)=\sum_{\Pi\in A_m^s}\frac{t^{e(\Pi)}}{(1-t)^{e(\Pi)}}$$
where $e(\Pi)$ denotes the number of edges of $\Pi$. Since $|A_m^s(r)|=0$ for $r>m-3$ and $e(\Pi)\leq m-3$ for all $\Pi\in A_m^s$, we have $F_m(t)=f_m(t)/(1-t)^{m-3}$ where $f_m(t)=\sum_{r=0}^{m-3} |A_m^s(r)|t^{r}(1-t)^{m-3-r}$ is a polynomial of degree at most $m-3$. As we have $f_m(0)=F_m(0)=1$ from the interpretation of $F_m(t)$ as a generating function, to prove that $f_m(t)$ has degree exactly $m-3$ it suffices to prove the remaining assertion $f_m(t)=t^{m-3}f_m(1/t)$, or equivalently
$$f_m(t)=\sum_{r=0}^{m-3}|A_m^s(r)|(t-1)^{m-3-r}.$$
To prove this, we compute $F_m(t)$ in another way by a type of inclusion-exclusion principle. For each $\Pi\in A_m^s$, note that the generating function for the number of graphs $\Pi'\in A_N$ with $(\Pi')^s\subseteq\Pi$ is given by $1/(1-t)^{e(\Pi)}$. Thus, we may write $F_m(t)$ by first adding the contributions from $A_m^s(m-3)$, then adding contributions from $A_m^s(m-4)$ and subtracting away the corresponding ``overcount'' from the previous step, and so on:
$$F_m(t)=\frac{|A_m^s(m-3)|}{(1-t)^{m-3}}+\frac{1}{(1-t)^{m-4}}\sum_{\Pi\in A_m^s(m-4)}(1-|A_m^s(m-3,\Pi)|)+\cdots.$$
Therefore, we see upon reflection that
\begin{align*}
F_m(t)&=\sum_{r=0}^{m-3}\frac{1}{(1-t)^r}\sum_{\Pi\in A_m^s(r)}\left\{1+\sum_{r'=r+1}^{m-3}\sum_{k=1}^{r'-r}(-1)^k\binom{r'-r}{k-1}|A_m^s(r',\Pi)|\right\}\\
&=\sum_{r=0}^{m-3}\frac{1}{(1-t)^{r}}\sum_{\Pi\in A_m^s(r)}\sum_{r'=r}^{m-3}(-1)^{r'-r}|A_m^s(r',\Pi)|\\
&=\sum_{r=0}^{m-3}\frac{(-1)^{m-3-r}|A_m^s(r)|}{(1-t)^{r}}=\frac{1}{(1-t)^{m-3}}\sum_{r=0}^{m-3}|A_m^s(r)|(t-1)^{m-3-r}
\end{align*}
(where the first equality on the last line follows by Corollary \ref{altsumcor}) from which we obtain $f_m(t)=\sum_{r=0}^{m-3}|A_m^s(r)|(t-1)^{m-3-r}$, showing that $f_m$ has degree $m-3$ and that $t^{m-3}f(1/t)=f(t)$ as desired.
\end{proof}

Let $m\geq1$ be an integer. As introduced in the beginning of Section \ref{sect:6}, for each $r\geq0$ let $B_m(r)$ denote the set of planar multigraphs $\Pi$ without self-loops in the closed unit disk $\Db$ having vertices $P_m=\{p_1,\dots,p_m\}$ and $r$ edges such that every vertex in $\Pi$ has even degree. Let $B_m^s(r)$ denote the set of those $\Pi\in B_m(r)$ that are simple graphs, i.e.~between any two vertices there is at most one edge. Let $B_m=\bigcup_{r=0}^\infty B_m(r)$ and $B_m^s=\bigcup_{r=0}^\infty B_m^s(r)$. We are interested in the generating function
$$G_m(t)=\sum_{r=0}^\infty|B_m(r)|t^r.$$
By a \emph{bigon} in a multigraph we shall mean a pair of edges whose endpoints coincide. Note that for every $\Pi\in B_m$ there exists a unique $\Pi^s\in B_m^s$ such that $\Pi$ is obtained by adding bigons to $\Pi^s$ without losing planarity. More precisely, $\Pi^s$ is constructed from $\Pi$ by repeatedly removing bigons from $\Pi$ until none remain. Thus, $G_m(t)$ is the sum of the contribution of each $\Pi\in B_m^s$ which is analyzed as follows. Let $\Pi\in B_m^s(r)$ be given. Let $e(\Pi)$ denote the number of edges of $\Pi$, and $e^\circ(\Pi)$ the number of edges of $\Pi$ joining non-contiguous vertices. Note that the edges of $\Pi$ joining non-contiguous vertices gives a decomposition of $\Db$ into $e^\circ(\Pi)+1$ polygons (as in the proof of Corollary \ref{altsumcor}) with numbers of sides $m_1,\dots,m_{e^\circ(\Pi)+1}$ such that
$$\sum_{k=1}^{{e^\circ(\Pi)+1}}(m_k-3)+e^\circ(\Pi)=m-3.$$
In this setting, the contribution from $\Pi$ to the generating function $G_m(t)$ is
\begin{align*}
\frac{t^{e(\Pi)}}{(1-t^2)^{m+e^\circ(\Pi)}}\prod_{k=1}^{e^\circ(\Pi)+1} F_{m_k}(t^2)&=\frac{t^{e(\Pi)}}{(1-t^2)^{m+e^\circ(\Pi)}}\prod_{k=1}^{e^\circ(\Pi)+1}\frac{f_{m_k}(t^2)}{(1-t^2)^{m_k-3}}\\
&=\frac{t^{e(\Pi)}\prod_{k=1}^{e^\circ(\Pi)+1}f_{m_k}(t^2)}{(1-t^2)^{2m-3}}
\end{align*}
where each $f_{m_k}(t^2)$ is a polynomial of degree $2(m_k-3)$ by Proposition \ref{fprop}. Thus, defining $d(\Pi)=2\sum_{k=1}^{e^\circ(\Pi)+1}(m_k-3)$ and $g_\Pi(t)=\prod_{k=1}^{e^\circ(\Pi)+1}f_{m_k}(t^2)$ for each $\Pi\in B_m^s$ with the $m_k$'s as above, we see that $g_\Pi(t)$ is a polynomial of degree $d(\Pi)$ with the symmetry
$t^{d(\Pi)}g_\Pi(1/t)=g_\Pi(t)$. We may thus write
$$G_m(t)=\frac{1}{(1-t^2)^{2m-3}}\sum_{\Pi\in B_m^s}t^{e(\Pi)}g_\Pi(t),$$
and we are ready to prove our main result of this section.

\begin{theorem}
\label{gth}
The series $G_m(t)=\sum_{r=0}^\infty|B_m(r)|t^r$ satisfies the symmetry
$$G_m(1/t)=(-1)^{2m-3}t^{m}G_m(t).$$
\end{theorem}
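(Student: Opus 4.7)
The plan is to reduce the functional equation to an equality of two sums over $B_m^s$ and to establish it via an explicit involution, combined with the palindromic symmetry $t^{d(\Gamma)} g_\Gamma(1/t) = g_\Gamma(t)$ (where $d(\Gamma) = 2(m-3-e^\circ(\Gamma))$) of the polynomials $g_\Gamma$ recorded just above. Setting $P_m(t) = (1-t^2)^{2m-3} G_m(t) = \sum_{\Gamma \in B_m^s} t^{e(\Gamma)} g_\Gamma(t)$, a short computation using $(1-t^{-2})^{2m-3} = -(1-t^2)^{2m-3}/t^{4m-6}$ shows that the desired equation $G_m(1/t) = (-1)^{2m-3} t^m G_m(t)$ is equivalent to the palindromicity $t^{3m-6} P_m(1/t) = P_m(t)$. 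Applying the symmetry of each $g_\Gamma$ to the left-hand side and writing $e^c(\Gamma) = e(\Gamma) - e^\circ(\Gamma)$ for the number of contiguous edges of $\Gamma$, this palindromicity is in turn equivalent to
$$\sum_{\Gamma \in B_m^s} t^{m + e^\circ(\Gamma) - e^c(\Gamma)} g_\Gamma(t) = \sum_{\Gamma \in B_m^s} t^{e^c(\Gamma) + e^\circ(\Gamma)} g_\Gamma(t).$$

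To establish this identity, I will define an involution $\iota : B_m^s \to B_m^s$ that preserves every non-contiguous edge of $\Gamma$ and replaces the set of contiguous edges of $\Gamma$ by its complement inside $\{p_1p_2, p_2p_3, \ldots, p_mp_1\}$. Since each vertex $p_i$ has exactly two candidate contiguous incidences, this toggling changes its contiguous degree $a \in \{0,1,2\}$ to $2-a$, preserving parity; combined with the unchanged non-contiguous degree, every vertex of $\iota(\Gamma)$ still has even degree. Planarity is preserved because each contiguous chord $p_ip_{i+1}$ can be drawn arbitrarily close to the boundary arc joining $p_i$ and $p_{i+1}$, where it avoids every other contiguous chord and every non-contiguous chord. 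Thus $\iota(\Gamma) \in B_m^s$.

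Since the $(e^\circ(\Gamma)+1)$-polygon decomposition of $\Db$ used to define $g_\Gamma$ depends only on the non-contiguous edges of $\Gamma$, one has $g_{\iota(\Gamma)}(t) = g_\Gamma(t)$; moreover $e^\circ(\iota \Gamma) = e^\circ(\Gamma)$ and $e^c(\iota \Gamma) = m - e^c(\Gamma)$. Reindexing the left-hand sum above by $\Gamma \mapsto \iota(\Gamma)$ transforms the exponent into $m + e^\circ(\Gamma) - (m - e^c(\Gamma)) = e^c(\Gamma) + e^\circ(\Gamma)$, producing the right-hand sum and completing the proof. The main difficulty is guessing the involution; once it is in place, every verification is elementary, with the only geometric input being the observation that contiguous chords can always be isotoped near the boundary without interfering with the rest of the chord diagram.
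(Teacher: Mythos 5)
Your proof is correct and is essentially the paper's own argument: your involution $\iota$ (keep non-contiguous edges, complement the contiguous ones) is exactly the paper's dual $\Gamma\mapsto\Gamma^\vee$, and your bookkeeping $e^{c}(\iota\Gamma)=m-e^{c}(\Gamma)$, $e^{\circ}(\iota\Gamma)=e^{\circ}(\Gamma)$, $g_{\iota\Gamma}=g_{\Gamma}$ together with $d(\Gamma)=2(m-3-e^{\circ}(\Gamma))$ is equivalent to the paper's identity $d(\Gamma)+e(\Gamma)+e(\Gamma^\vee)=3m-6$. The only cosmetic difference is that you reindex the sum by $\iota$ to prove palindromicity of the numerator, whereas the paper symmetrizes the sum over pairs $\{\Gamma,\Gamma^\vee\}$ with a factor $\tfrac12$; these are the same manipulation.
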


\begin{proof}
Given $\Pi\in B_m^s$, we define its \emph{dual} $\Pi^\vee$ as the graph obtained from $\Pi$ by removing (resp.~adding) one edge between contiguous vertices that were adjacent (resp.~not adjacent) in $\Pi$, and retaining any edges between non-contiguous vertices in $\Pi$. It is easy to see that in fact $\Pi^\vee\in B_m^s$, and $(\Pi^\vee)^\vee=\Pi$. Note that we have $g_\Pi(t)=g_{\Pi^\vee}(t)$ and $d(\Pi)=d(\Pi^\vee)$; these follow from the fact that the construction of $g_\Pi(t)$ only depended on the edges of $\Pi$ joining non-contiguous vertices. Furthermore, note that we have
$$d(\Pi)+e(\Pi)+e(\Pi^\vee)=3m-6.$$
Hence, writing
$$G_m(t)=\frac{1}{2}\frac{1}{(1-t^2)^{2m-3}}\sum_{\Pi\in B_m^s}(t^{e(\Pi)}+t^{e(\Pi^\vee)})g_{\Pi}(t),$$
we obtain
\begin{align*}
G_m(1/t)&=\frac{1}{2}\frac{1}{(1-t^{-2})^{2m-3}}\sum_{\Pi\in B_m^s}(t^{-e(\Pi)}+t^{-e(\Pi^\vee)})g_{\Pi}(1/t)\\
&=\frac{1}{2}\frac{(-1)^{2m-3}t^{4m-6}}{(1-t^2)^{2m-3}}\sum_{\Pi\in B_m^s}\frac{(t^{e(\Pi^\vee)}+t^{e(\Pi)})g_{\Pi}(t)}{t^{3m-6}}=(-1)^{2m-3}t^{m}G_m(t)
\end{align*}
which gives us the result.
\end{proof}

Returning to the beginning of Section \ref{sect:6}, we have $Z_m(t)=(1-t^m)G_m(t)$. Hence, using Theorem \ref{gth}, we find that
\begin{align*}
Z_m(1/t)&=(1-t^{-m})G_m(1/t)=(1-t^{-m})(-1)^{2m-3}t^mG_m(t)\\
&=(1-t^m)G_m(t)=Z_m(t),
\end{align*}
which concludes the proof of Theorem 6.1.

\appendix
\section{Auxiliary results on matrices} \label{sect:a}

\subsection{Identities} \label{sect:a.1}
Let $\M$ be the complex affine scheme parametrizing $2\times2$ matrices. Let $x$ denote the standard matrix variable for $\M$. Viewing $x$ and $x^*$ as $2\times 2$ matrices with coefficients in $\C[\M]$, we have
\begin{align}
x+x^*=\tr(x)\mathbf 1,\quad\text{and}\quad xx^*=x^*x=\det(x)\mathbf1.
\end{align}
Note that $\det(a^*)=\det(a)$ and $\tr(a^*)=\tr(a)$ for any $2\times2$ matrix $a$. Multiplying the matrix identity $b+b^*=\tr(b)\mathbf 1$ by the matrix $a$ and taking the trace, we also obtain the identity
\begin{align}
\tr(a)\tr(b)=\tr(ab)+\tr(ab^*).
\end{align}

\subsection{Invariant theory} \label{sect:a.2}
We have an action of $\SL_2$ on $\M$ by conjugation, and hence a diagonal conjugation action of $\SL_2$ on $\M^2$. It is classical that the ring of invariants $\C[\M^2\git\SL_2]=\C[\M^2]^{\SL_2}$ is a polynomial ring on $5$ generators $$\C[\tr(x_1),\tr(x_2),\tr(x_1x_2),\det(x_1),\det(x_2)],$$
where $x_1$ and $x_2$ are the standard matrix variables on $\M^2$. (See for example \cite[Theorem 5.3.1(ii), p.68]{df}) In particular, any $\SL_2$-invariant regular function in matrix variables $x_1$ and $x_2$ is a polynomial combination of the functions $\tr(x_1)$, $\tr(x_2)$, $\tr(x_1x_2)$, $\det(x_1)$, and $\det(x_2)$. For example, using the identities (1) and (2), we find that
\begin{align*}
\tr(\langle a_1,a_2\rangle)=&\tr(a_1)^2\det(a_2)+\tr(a_2)^2\det(a_1)+\tr(a_1a_2)^2\\
&\quad -\tr(a_1)\tr(a_2)\tr(a_1a_2)-2\det(a_1)\det(a_2).
\end{align*}
For each $t\in\C$, the action of $\SL_2$ on $\M^2$ preserves the closed subscheme $\M_t^2\subset\M^2$, and since $\SL_2$ is linearly reductive we have
$$\C[\M_t^2\git\SL_2]=\C[\M^2]^{\SL_2}/(\det(x_1)-t,\det(x_2)-t)\simeq\C[\tr(x_1),\tr(x_2),\tr(x_1x_2)].$$
In particular, $\SL_2^2\git\SL_2\simeq\A^3$, which is due to Fricke (see Goldman \cite[Section 2.2]{goldman2} for details).
\begin{lemma}
\label{a1}
The quotient morphism $\M^2\to\M^2\git\SL_2\simeq\A^5$ is surjective.
\end{lemma}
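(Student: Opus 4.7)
The plan is to exhibit, for each point $(t_1,t_2,t_3,d_1,d_2)\in\Ab^5(\Cb)$, an explicit preimage under the quotient morphism. By the identification of $\Cb[\Mb^2]^{\SL_2}$ with the polynomial ring on $\tr(x_1),\tr(x_2),\tr(x_1x_2),\det(x_1),\det(x_2)$ recalled above, this reduces to constructing $(a_1,a_2)\in\Mb(\Cb)^2$ with prescribed values for these five invariants; and since both source and target are affine schemes of finite type over $\Cb$, producing a preimage over each closed point of $\Ab^5$ suffices.

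The first matrix is the easy one: I would take $a_1=\bigl(\begin{smallmatrix}0&-d_1\\1&t_1\end{smallmatrix}\bigr)$, the companion matrix of $x^2-t_1x+d_1$, whose trace and determinant are visibly $t_1$ and $d_1$. For $a_2$, I would use algebraic closedness of $\Cb$ to pick a root $s$ of $x^2-t_2x+d_2$ and set $a_2=\bigl(\begin{smallmatrix}t_2-s&u\\0&s\end{smallmatrix}\bigr)$ with the off-diagonal entry $u$ left free. The trace and determinant of $a_2$ are then automatically $t_2$ and $(t_2-s)s=d_2$, regardless of $u$. The only remaining invariant to match is $\tr(a_1a_2)$, which a direct $2\times 2$ multiplication shows depends \emph{linearly and nontrivially} on $u$, so the correct value of $u$ can be solved for explicitly (one finds $u=t_3-t_1s$).

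There is essentially no obstacle here: the lemma boils down to a concrete calculation, and the upper-triangular shape of $a_2$ is engineered so that the conditions on $\tr(a_2)$, $\det(a_2)$, and $\tr(a_1a_2)$ decouple into three independent one-variable problems. One could alternatively appeal to the general principle that for a reductive group acting on an affine variety over an algebraically closed field, the GIT quotient morphism is surjective on closed points, but the explicit construction has the advantage of being self-contained and of making transparent the fact that each of the five invariants can be independently prescribed.
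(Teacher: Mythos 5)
Your proof is correct, but it takes a genuinely different route from the paper. The paper's argument is purely structural: since $\SL_2$ is linearly reductive, $\Cb[\Mb^2]^{\SL_2}$ is a pure subring of $\Cb[\Mb^2]$, so every proper ideal of the invariant ring extends to a proper ideal upstairs, which already forces surjectivity of the quotient morphism; this is exactly the ``general principle'' you mention only in passing, and it is the one the author reuses elsewhere in the paper (e.g.\ in the proofs of Proposition \ref{propcm} and Theorem \ref{gorenst}), so the short abstract proof fits a running theme. Your construction instead exhibits an explicit preimage of each closed point: the companion matrix $a_1=\bigl(\begin{smallmatrix}0&-d_1\\1&t_1\end{smallmatrix}\bigr)$ together with the upper-triangular $a_2=\bigl(\begin{smallmatrix}t_2-s&u\\0&s\end{smallmatrix}\bigr)$, $s$ a root of $x^2-t_2x+d_2$, does realize $\tr(a_1)=t_1$, $\det(a_1)=d_1$, $\tr(a_2)=t_2$, $\det(a_2)=s(t_2-s)=d_2$, and $\tr(a_1a_2)=u+t_1s$, so $u=t_3-t_1s$ works; the reduction from surjectivity to surjectivity on closed points is also fine (the image is constructible by Chevalley, and a constructible subset of a finite-type $\Cb$-scheme containing all closed points is everything, since such schemes are Jacobson), though you state this step without naming the facts behind it. What your approach buys is an elementary, invariant-theory-free verification that the five invariants can be prescribed independently, with explicit fibers; what the paper's approach buys is brevity and immediate generality (it applies verbatim to any reductive group acting on an affine scheme, with no matrix computation).
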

\begin{proof}
By general theory, $\C[\M^2]^{\SL_2}$ is a pure subring of $\C[\M^2]$. In particular, for any ideal $I$ of $\C[\M^2]^{\SL_2}$ we have $I\C[\M^2]\cap \C[\M^2]^{\SL_2}=I$. In particular, if $I$ is a proper ideal of $\C[\M^2]^{\SL_2}$ then $I\C[\M^2]$ is a proper ideal of $\C[\M^2]$. This implies the lemma.
\end{proof}

\subsection{Commutators} \label{sect:a.3}
\begin{lemma}
\label{a2}
We have the following.
\begin{enumerate}
	\item[\textup{(1)}] For $b\in\M_0(\C)$ nonscalar, $\{a\in\M_0(\C):[a,b]=0\}$ has dimension $1$.
	\item[\textup{(2)}] For $b\in\SL_2(\C)$ nonscalar and $k\in\C$, the locus of $a\in\SL_2(\C)$ with $[a,b]=0$ and $\tr(ab)=k$ has dimension $1$ if $\tr(b),k\in\{\pm2\}$, and is finite otherwise.
\end{enumerate}
\end{lemma}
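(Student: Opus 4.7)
The plan is to exploit the standard fact that, for a nonscalar $2\times 2$ matrix $b$, the centralizer of $b$ in $\Mb$ is the two-dimensional subalgebra $\Cb[b]=\Cb\mathbf{1}\oplus\Cb b$. Any $a$ commuting with $b$ therefore has the form $a=\alpha\mathbf{1}+\beta b$, and a direct expansion from the characteristic polynomial yields
\[
\det(\alpha\mathbf{1}+\beta b)=\alpha^{2}+\alpha\beta\tr(b)+\beta^{2}\det(b).
\]
Both statements then reduce to intersection problems in the affine plane $\Ab^{2}$ with coordinates $(\alpha,\beta)$.

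For (1), since $\det(b)=0$, the condition $\det(a)=0$ becomes $\alpha(\alpha+\beta\tr(b))=0$. This cuts out a union of two lines (distinct if $\tr(b)\neq 0$, coinciding if $\tr(b)=0$), so the locus has dimension exactly one, as required.

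For (2), set $\tau=\tr(b)$. The condition $\det(a)=1$ is the conic $C\colon \alpha^{2}+\alpha\beta\tau+\beta^{2}=1$. Applying Cayley--Hamilton in the form $b^{2}=\tau b-\mathbf{1}$ gives $ab=-\beta\mathbf{1}+(\alpha+\beta\tau)b$, so $\tr(ab)=\tau\alpha+(\tau^{2}-2)\beta$; this is a nonzero linear form since $\tau$ and $\tau^{2}-2$ cannot simultaneously vanish. Thus the locus of interest is the intersection of $C$ with the line $L\colon \tau\alpha+(\tau^{2}-2)\beta=k$.

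I would conclude by analysing this intersection through the discriminant $\tau^{2}-4$ of the quadratic form defining $C$. When $\tau\neq\pm 2$, the quadric is nondegenerate, so $C$ is a smooth conic and $C\cap L$ has at most two points, hence is finite. When $\tau=\pm 2$, the conic degenerates as $(\alpha\pm\beta)^{2}=1$, i.e.\ the two parallel lines $\alpha\pm\beta=\pm 1$, and the constraint $L$ simplifies to $\pm\alpha+\beta=k/2$. This line coincides with one of the two components precisely when $k/2\in\{\pm 1\}$, i.e.\ $k\in\{\pm 2\}$, producing a one-dimensional solution set; for $k\notin\{\pm 2\}$ the intersection is empty. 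The only delicate point is the sign-tracking in this degenerate case, which is routine.
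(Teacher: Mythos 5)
Your proposal is correct and follows essentially the same route as the paper: write $a=\alpha\mathbf1+\beta b$ using that the centralizer of a nonscalar matrix is $\Cb\mathbf1\oplus\Cb b$, translate the determinant and trace conditions into a conic and a line in the $(\alpha,\beta)$-plane, and use the discriminant $\tr(b)^2-4$ to separate the nondegenerate (finite intersection) case from the degenerate case $\tr(b)=\pm2$, where the line coincides with a component exactly when $k\in\{\pm2\}$. This matches the paper's argument in both structure and computation.
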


\begin{proof}
Since $b$ is nonscalar, any $a\in\M(\C)$ with $[a,b]=0$ must be of the form $a=\lambda_1\mathbf1+\lambda_2 b$ with $\lambda_i\in\C$. In such a case, we have
\begin{align*}
\tag{A}\det(a)&=\det(\lambda_1\mathbf1+\lambda_2b)=\lambda_1^2+\lambda_1\lambda_2\tr(b)+\lambda_2^2\det(b),\\
\tag{B}\tr(ab)&=\tr((\lambda_1\mathbf1+\lambda_2b)b)=\lambda_1\tr(b)+\lambda_2\tr(b^2).
\end{align*}
We first prove (1). If $\det(a)=\det(b)=0$, then we have $\lambda_1^2+\lambda_1\lambda_2\tr(b)=0$. It follows that the desired locus is the union $\{\lambda b:\lambda\in\C\}\cup\{\lambda b^*:\lambda\in\C\}$, and hence is $1$-dimensional. It remains to prove (2). Let $S$ be the locus of $a\in\SL_2(\C)$ determined by the conditions of (2). Under the assumptions, by the Cayley-Hamilton theorem we have $\tr(b^2)-\tr(b)^2+2=0$ and hence $\tr(b^2)\neq0$ or $\tr(b)^2\neq0$. Thus, equation (B) defines a line in the $(\lambda_1,\lambda_2)$-plane. In particular, $S$ is at most $1$-dimensional. Suppose it is not finite. The conic in the $(\lambda_1,\lambda_2)$-plane defined by equation (A) must then be degenerate, i.e.~the discriminant
$$-\det(a)\det\begin{bmatrix}1 & \tr(b)/2 \\ \tr(b)/2 &\det(b)\end{bmatrix}=-1(1-\tr(b)^2/4)$$
is zero. Thus, we must have $4=\tr(b)^2$, i.e.~$\tr(b)=\pm2$. Equations (A) and (B) then become
\begin{align*}
\tag{A$'$}1&=(\lambda_1+\lambda_2\tr(b)/2)^2,\\
\tag{B$'$}k&=\tr(b)(\lambda_1+\lambda_2\tr(b)/2).
\end{align*}
The degenerate conic defined by equation (A$'$) is a union of two disjoint lines. For $S$ to be infinite, one of the two lines must coincide with the line defined by equation (B$'$). In other words, we must have $(k/\tr(b))^2=1$, or $k=\pm\tr(b)\in\{\pm2\}$. If this happens, then $S$ is one-dimensional. Thus, we have proved the lemma.
\end{proof}

\subsection{Matrices of determinant one} \label{sect:a.4}
\begin{lemma}
\label{a3}
Fix $(k_1,k_2,k_3)\in\C^3$. The morphism:
\begin{enumerate}
	\item[\textup{(1)}] $\prod_{i=1}^2\SL_{2,k_i}\to\SL_2$ given by $(a_1,a_2)\mapsto a_1a_2$ is surjective over $\SL_2\setminus\{\pm\mathbf1\}$.
	\item[\textup{(2)}] $\prod_{i=1}^3\SL_{2,k_i}\to\SL_2$ given by $(a_1,a_2,a_3)\mapsto a_1a_2a_3$ is surjective.
	\item[\textup{(3)}] $\langle-,-\rangle:\SL_2^2\to\SL_2$ is surjective.
\end{enumerate}
\end{lemma}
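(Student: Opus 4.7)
My plan for (1) is to reduce to a linear-algebra problem on $a_1$. Fix $b\in\SL_2(\Cb)\setminus\{\pm\mathbf{1}\}$ and seek $a_1\in\SL_{2,k_1}$ with $a_2:=a_1^{-1}b$ satisfying $\tr(a_2)=k_2$. Multiplying the identity $a_1+a_1^*=\tr(a_1)\mathbf{1}$ by $b$ on the right and taking traces yields $\tr(a_1 b)+\tr(a_1^{-1}b)=\tr(a_1)\tr(b)$, so the condition on $a_2$ is equivalent to the single linear equation $\tr(a_1 b)=k_1\tr(b)-k_2$. The problem thus becomes surjectivity of the map $a_1\mapsto(\tr a_1,\tr(a_1 b))$ from $\SL_2$ to $\Cb^2$ for any fixed $b\ne\pm\mathbf{1}$, which by $\SL_2$-equivariance I would verify after replacing $b$ by a Jordan normal form: when $b=\left[\begin{smallmatrix}\lambda & 0 \\ 0 & \lambda^{-1}\end{smallmatrix}\right]$ with $\lambda\ne\pm1$, the two functionals are linear in the diagonal entries of $a_1$ with nonsingular Jacobian $\lambda^{-1}-\lambda$; when $b$ is a nontrivial parabolic $\pm\left[\begin{smallmatrix}1 & 1 \\ 0 & 1\end{smallmatrix}\right]$, the $(2,1)$-entry of $a_1$ realizes $\tr(a_1 b)\mp\tr(a_1)$ as a free parameter. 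The remaining off-diagonal entries of $a_1$ then provide enough freedom to enforce $\det a_1=1$.

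Part (2) I would deduce from (1) by a dimension count. Given $b\in\SL_2(\Cb)$, the subvariety $\SL_{2,k_1}$ is two-dimensional while $\SL_{2,k_1}\cap\{\pm b\}$ contains at most two points, so I can choose $a_1\in\SL_{2,k_1}$ with $b':=a_1^{-1}b\notin\{\pm\mathbf{1}\}$. By (1) I factor $b'=a_2 a_3$ with $\tr a_i=k_i$ to obtain $b=a_1 a_2 a_3$.

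For (3), I would argue by $\SL_2$-equivariance and explicit construction on each Jordan type of $b\in\SL_2(\Cb)$. If $b$ is diagonalizable with eigenvalues $\lambda^{\pm 1}$, direct computation gives
\[
\left[\diag(\mu,\mu^{-1}),\,\begin{bmatrix}0 & 1 \\ -1 & 0\end{bmatrix}\right]=\diag(\mu^2,\mu^{-2}),
\]
so taking $\mu^2=\lambda$ realizes $b$ up to conjugation; the case $b=\mathbf{1}$ is trivial. For a nontrivial parabolic at trace $+2$, the analogous identity
\[
\left[\diag(\mu,\mu^{-1}),\,\begin{bmatrix}1 & c \\ 0 & 1\end{bmatrix}\right]=\begin{bmatrix}1 & (\mu^2-1)c \\ 0 & 1\end{bmatrix}
\]
suffices. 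The main obstacle is the trace $-2$ case, since the diagonal-by-parabolic commutator above always produces trace $+2$; here I would take $a_1=\diag(i,-i)$ and verify the closed formula
\[
[a_1,a_2]=\begin{bmatrix}ad+bc & -2ab \\ -2cd & ad+bc\end{bmatrix},\qquad a_2=\begin{bmatrix}a & b \\ c & d\end{bmatrix}\in\SL_2.
\]
Imposing $bc=-1$ forces trace $-2$; then $a=0$, $d\neq 0$ yields a nontrivial parabolic at $-2$, while $a_2=\left[\begin{smallmatrix}0 & i \\ i & 0\end{smallmatrix}\right]$ (with $ab=cd=0$) yields $-\mathbf{1}$. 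Equivariance then covers the full conjugacy classes.
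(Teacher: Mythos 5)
Your proof is correct, but it takes a genuinely different route from the paper's. For (1) the paper invokes the surjectivity of the invariant-theoretic quotient $\Mb^2\to\Mb^2\git\SL_2$ (Lemma \ref{a1}) to produce a pair with prescribed traces $(k_1,k_2,\tr b)$ and then argues by conjugacy, treating trace-$\pm2$ targets by a case split on whether $k_1=\pm k_2$, with an explicit upper-triangular pair in the degenerate case; you instead reduce, via the identity $\tr(a_1b)+\tr(a_1^{-1}b)=\tr(a_1)\tr(b)$, to prescribing the pair $(\tr a_1,\tr(a_1b))$ and solve this directly in Jordan coordinates for $b$ --- a more elementary, self-contained argument that avoids Lemma \ref{a1} entirely (just note that in the parabolic case, when the forced $(2,1)$-entry of $a_1$ vanishes, you must solve $p+s=k_1$, $ps=1$ for the diagonal entries, which is always possible over $\Cb$). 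For (2) both arguments bootstrap from (1); your choice of $a_1\in\SL_{2,k_1}\setminus\{\pm b\}$, which exists since $\SL_{2,k_1}$ is $2$-dimensional, is a clean variant of the paper's reduction, which inserts an auxiliary trace $k'\neq\pm2$ and treats $b=\pm\mathbf1$ separately. For (3) the paper again uses Lemma \ref{a1} to match $\tr\langle a_1,a_2\rangle$ with $\tr b$ and concludes by conjugacy, writing down explicit pairs only for $b=\pm\mathbf1$; your argument is purely constructive, exhibiting a commutator in each conjugacy class, and in particular it handles the non-central trace-$\pm2$ classes explicitly (via $\langle\diag(\mu,\mu^{-1}),u\rangle$ for trace $+2$ and your formula with $a_1=\diag(i,-i)$ for trace $-2$), a point the paper's trace-matching step passes over rather quickly, since equality of traces alone does not distinguish $\pm\mathbf1$ from nontrivial parabolics; your computations there check out. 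One cosmetic remark: you write $[\cdot,\cdot]$ for the group commutator, whereas the paper reserves $[a,b]$ for $ab-ba$ and uses $\langle a,b\rangle=aba^*b^*$; your identities are of course for $\langle\cdot,\cdot\rangle$.
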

\begin{proof}
(1) By Lemma \ref{a1}, given any $b\in\SL_2(\C)$ there exist $a_i\in\SL_{2,k_i}(\C)$ such that $\tr(a_1a_2)=\tr(b)$. If $\tr(b)\neq\pm2$, then this implies $(ga_1g^{-1})(ga_2g^{-1})=b$ for some $g\in\SL_2(\C)$. Consider now the case $\tr(b)=2s$ for some $s\in\{\pm1\}$ but $b\neq s\mathbf 1$. If $k_1\neq sk_2$, then $a_1a_2$ cannot be $s\mathbf1$ and hence $(ga_1g^{-1})(ga_2g^{-1})=b$ for some $g\in\SL_2(\C)$ as before. The case remains that $k_1=sk_2=k$. Let $\lambda\in\C^{\times}$ be a root of the polynomial $x^2-kx+1=0.$ We then have
$$\begin{bmatrix}\lambda & 1\\0 &\lambda^{-1}\end{bmatrix}\begin{bmatrix}s\lambda^{-1} & s\\0 &s\lambda\end{bmatrix}=\begin{bmatrix}s & 2s\lambda\\0 & s\end{bmatrix},$$
and letting $a_1$ and $a_2$ respectively be the two matrices on the left hand side, there exists $g\in\SL_2(\C)$ such that $(ga_1g^{-1})(ga_2g^{-1})=b$.

(2) By part (1), given any $b\in\SL_2(\C)$ different from $\pm\mathbf1$ and any $k'\in\C\setminus\{\pm2\}$, there exist $b'\in\SL_{2,k'}(\C)$ and $a_3\in\SL_{2,k_3}(\C)$ such that $b'a_3=b$. Since $b'\neq\pm\mathbf1$ due to the condition $k'\neq\pm2$, again by part (1) there exist $a_1\in\SL_{2,k_1}(\C)$ and $a_2\in\SL_{2,k_2}(\C)$ such that $a_1a_2=b'$, and thus $a_1a_2a_3=b$, as desired. If $b=s\mathbf1$ for some $s\in\{\pm1\}$, then choosing $a_3\in\SL_{2,k_3}(\C)$ different from $\pm\mathbf1$, there exist $a_1\in\SL_{2,k_1}(\C)$ and $a_2\in\SL_{2,k_2}(\C)$ such that $a_1a_2=sa_3^{-1}$ and hence $a_1a_2a_3=b$.

(3) From Lemma \ref{a1}, given any $b\in\SL_2(\C)$ there exist $a_1,a_2\in\SL_2(\C)$ such that $\tr\langle a_1,a_2\rangle=\tr b$. If $b\neq\pm\mathbf1$, then $\langle ga_1g^{-1},ga_2g^{-1}\rangle=g\langle a_1,a_2\rangle g^{-1}=b$ for some $g\in\SL_2(\C)$, as desired. Finally, in the case where $b=\pm\mathbf1$, we have
$$\langle\mathbf1,\mathbf1\rangle=\mathbf1\quad\text{and}\quad\left\langle\begin{bmatrix}i & 0\\ 0 & -i\end{bmatrix},\begin{bmatrix}0 & 1\\ -1 & 0\end{bmatrix}\right\rangle=-\mathbf1$$
which proves the desired result.
\end{proof}

\begin{lemma}
\label{a4}
The fiber of the morphism $\langle-,-\rangle:\SL_2^2\to\SL_2$ above $\mathbf1$ has dimension $4$. The fiber above $-\mathbf1$ has dimension $3$.
\end{lemma}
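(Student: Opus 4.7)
The plan is to analyze each fiber by stratifying according to the first factor $a_1\in\SL_2$.

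For the fiber over $\mathbf{1}$, consisting of commuting pairs $a_1a_2=a_2a_1$, I would split on whether $a_1$ is scalar. The scalar case $a_1\in\{\pm\mathbf{1}\}$ contributes two disjoint copies of $\SL_2$, of dimension $3$. In the nonscalar case, $a_1$ ranges over a $3$-dimensional open subset of $\SL_2$, and for each such $a_1$ the centralizer in $\Mb$ is the $2$-dimensional subspace spanned by $\mathbf{1}$ and $a_1$; writing $a_2=\lambda_1\mathbf{1}+\lambda_2a_1$ and imposing $\det(a_2)=1$ cuts out the conic $\lambda_1^2+\lambda_1\lambda_2\tr(a_1)+\lambda_2^2=1$ in $(\lambda_1,\lambda_2)$, which is $1$-dimensional (the same computation appearing in Lemma \ref{a2}). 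The open stratum thus has dimension $3+1=4$, dominating the scalar stratum, so the fiber has dimension exactly $4$.

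For the fiber over $-\mathbf{1}$, the first step is a trace reduction: from $a_1a_2=-a_2a_1$, conjugation yields $a_1a_2a_1^{-1}=-a_2$, and taking traces gives $\tr(a_2)=-\tr(a_2)$, so $\tr(a_2)=0$. Symmetrically $\tr(a_1)=0$, so both matrices lie in $\SL_{2,0}$, which is $2$-dimensional, and by Cayley--Hamilton each $a_i^2=-\mathbf{1}$, making them nonscalar. For fixed $a_1\in\SL_{2,0}$, I would invoke the $\SL_2$-equivariance of the defining relation under simultaneous conjugation to reduce to the normal form $a_1=\diag(i,-i)$; a direct computation shows the anti-commuting $a_2\in\SL_{2,0}$ are precisely the matrices $\left(\begin{smallmatrix}0 & b\\ -b^{-1} & 0\end{smallmatrix}\right)$ with $b\in\Cb^\times$, a $1$-dimensional family. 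Summing over the $2$-parameter family of $a_1$ gives dimension $2+1=3$.

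I do not anticipate a serious obstacle: both cases reduce cleanly to a generic parameter space for $a_1$ together with a $1$-dimensional slice of compatible $a_2$. The only point requiring care is the forced vanishing of $\tr(a_1)$ and $\tr(a_2)$ in the anti-commuting case, which is precisely what drops the $a_1$-parameter from $3$ dimensions (all of $\SL_2$) to $2$ dimensions ($\SL_{2,0}$), and accounts for the drop from $4$ to $3$ between the two fibers.
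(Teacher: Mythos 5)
Your proposal is correct and follows essentially the same route as the paper: in both cases one projects the fiber onto one $\SL_2$ factor, stratifies by whether that matrix is scalar (resp.\ notes that anti-commutation forces both traces to vanish), and counts a $1$-dimensional family of compatible partners over a $3$-dimensional (resp.\ $2$-dimensional) base. The only cosmetic difference is that the paper computes the anti-commutant over $-\mathbf1$ via the rank of an explicit $4\times4$ linear system rather than by conjugating $a_1$ to the normal form $\diag(i,-i)$.
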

\begin{proof}
For $s\in\{+,-\}$, let $F_s$ be the fiber of $\langle-,-\rangle$ over $s\mathbf1$. Consider the projection
$$\pi:F_{\pm}\subset\SL_2^2\to\SL_2$$
onto the second factor. We first consider $F_+$. The projection $\pi:F_+\to\SL_2$ is surjective, since for any $b\in\SL_2(\C)$ we have $\langle\mathbf1,b\rangle=\mathbf1$. Given any $b\in\SL_2(\C)$, we identify the fiber $\pi^{-1}(b)$ with the scheme of matrices $a\in\SL_2(\C)$ such that $\langle a,b\rangle=\mathbf1$, or in other words $[a,b]=0$. If $b=\pm\mathbf1$, then $\pi^{-1}(b)=\SL_2$ has dimension $3$. If $b\neq\pm\mathbf1$, then any $a\in\pi^{-1}(b)(\C)$ must be of the form $a=\lambda_1\mathbf1+\lambda_2 b$ for some $\lambda_1,\lambda_2\in\C$. The condition $\det(a)=1$ then identifies $\pi^{-1}(b)$ with a curve in the $(\lambda_1,\lambda_2)$-plane. Hence, the fibers of $\pi:F_+\to\SL_2$ above $\SL_2\setminus\{\pm\mathbf1\}$ are $1$-dimensional. Therefore $F_+$ is $3+1=4$-dimensional.

Consider next $F_-$. Given $b\in\SL_2(\C)$, we identify the fiber $\pi^{-1}(b)$ with the scheme of matrices $a\in\SL_2(\C)$ such that $\langle a,b\rangle=-\mathbf1$, or in other words $ab+ba=0$. Writing $a=(a_{ij})$ and $b=(b_{ij})$, the condition $ab+ba=0$ amounts to
$$\begin{bmatrix}2b_{11} & b_{21} & b_{12} & \\ b_{12} & b_{11}+b_{22} & & b_{12}\\ b_{21} & & b_{11}+b_{22} & b_{21}\\ & b_{21} & b_{12} & 2b_{22}\end{bmatrix}\begin{bmatrix}a_{11}\\ a_{12} \\ a_{21} \\a_{22}\end{bmatrix}=0.$$
Hence, for the fiber $\pi^{-1}(b)$ to be nonempty, we need the determinant of the $4\times 4$ matrix on the left hand side to be zero, or in other words $\tr(b)=b_{11}+b_{22}=0$. Thus, the image of $\pi$ is $\SL_{2,0}$. Given any $b\in\SL_{2,0}(\C)$, the $4\times 4$ matrix above has rank $2$, and hence its kernel is $2$-dimensional. Given the additional determinant condition $a_{11}a_{22}-a_{12}a_{21}=1$, the fiber $\pi^{-1}(b)$ has dimension $1$. Since $\SL_{2,0}$ is $2$-dimensional, we see that $F_-$ is $2+1=3$-dimensional.
\end{proof}

\subsection{Matrices of determinant zero} \label{sect:a.5}
\begin{lemma}
\label{a5}
Let $b,b'\in\M_0(\C)$ both be nonzero.
\begin{enumerate}
	\item[\textup{(1)}] The locus of $a\in\M_0(\C)$ with $ab=0$ or $ba=0$ is $2$-dimensional.
	\item[\textup{(2)}] The locus of $a\in\M_0(\C)$ with $ab=ba=0$ is $1$-dimensional.
	\item[\textup{(3)}] The locus of $a\in\M_0(\C)$ with $bab'=0$ is $2$-dimensional. The intersection of this locus with $\M_{0,0}$ is at most $1$-dimensional.
\end{enumerate}
\end{lemma}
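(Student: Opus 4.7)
The plan is to reduce every condition to a linear or bilinear equation by exploiting that every nonzero element of $\Mb_0(\Cb)$ has rank one. I would begin by writing the rank-one matrices $b = u_b v_b^T$ and $b' = u_{b'} v_{b'}^T$ for nonzero column vectors $u_b, v_b, u_{b'}, v_{b'} \in \Cb^2$, and parametrizing $\Mb_0(\Cb) \setminus \{0\}$ by $a = uv^T$ with nonzero $u, v \in \Cb^2$, a presentation unique up to the $\Gb_m$-action $(u,v) \mapsto (\lambda u, \lambda^{-1} v)$. Under this parametrization the relevant products collapse to
$$ab = (v^T u_b)\, u v_b^T, \quad ba = (v_b^T u)\, u_b v^T, \quad bab' = (v_b^T u)(v^T u_{b'})\, u_b v_{b'}^T,$$
while the trace becomes $\tr(a) = v^T u$, turning each hypothesis into a linear or bilinear condition on the pair $(u,v)$.

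For part (1), the condition $ab = 0$ becomes the single linear equation $v^T u_b = 0$ on $v$. Thus $v$ ranges over a $1$-dimensional subspace of $\Cb^2$ while $u$ varies over $\Cb^2 \setminus 0$, giving a $3$-dimensional family that descends to a $2$-dimensional subvariety under the $\Gb_m$-quotient; the case $ba = 0$ is symmetric. For part (2), imposing both conditions forces $v^T u_b = 0$ and $v_b^T u = 0$ simultaneously, so each of $u$ and $v$ is restricted to a fixed $1$-dimensional subspace, yielding a $2$-dimensional family of pairs and a $1$-dimensional locus after passing to the quotient.

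For part (3), the bilinear factorization of $bab'$ shows that $bab' = 0$ is equivalent, on $\Mb_0 \setminus 0$, to the disjunction $v_b^T u = 0$ or $v^T u_{b'} = 0$, each branch being of the form analyzed in part (1), so the locus is $2$-dimensional. Intersecting with $\Mb_{0,0}$ adds the constraint $v^T u = 0$. On the branch $v_b^T u = 0$, the vector $u$ is pinned to a fixed line while the condition $v^T u = 0$ cuts $v$ down to a $1$-dimensional subspace, giving a $1$-dimensional quotient. On the branch $v^T u_{b'} = 0$, the simultaneous conditions $v^T u_{b'} = 0$ and $v^T u = 0$ force $v = 0$ unless $u$ and $u_{b'}$ are linearly dependent; in the remaining case $u$ lies on the fixed line spanned by $u_{b'}$ and $v$ on the $1$-dimensional orthogonal complement, again giving a $1$-dimensional family.

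The only real issue is bookkeeping: one must verify that the $\Gb_m$-quotient subtracts exactly one dimension in each count (which holds because the action is free on pairs with both entries nonzero), and that the zero matrix, which lies in all of the loci above, contributes only a single isolated point and does not inflate any of the dimension estimates. I do not expect genuine obstacles, as the rank-one factorization reduces the entire statement to elementary linear algebra in $\Cb^2$.
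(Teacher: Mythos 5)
Your argument is correct, and it takes a genuinely different route from the paper. The paper handles (1) and (2) by conjugating $b$ into one of the two normal forms $\left[\begin{smallmatrix}\lambda & 0\\ 0 & 0\end{smallmatrix}\right]$ or $\left[\begin{smallmatrix}0 & 1\\ 0 & 0\end{smallmatrix}\right]$ and reading off linear equations on the entries of $a$, and it handles (3) by viewing $a\mapsto bab'$ as a linear endomorphism of $\Mb(\Cb)\simeq\Cb^4$ whose $4\times4$ matrix has rank $1$, so that its kernel is a $3$-dimensional (resp., after imposing $\tr(a)=0$, a $2$-dimensional) linear subspace, and then bounding the intersection with the irreducible non-linear hypersurface $\Mb_0$ (resp.\ the cone $\Mb_{0,0}$) by irreducibility. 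You instead exploit the rank-one factorization $a=uv^T$, $b=u_bv_b^T$, $b'=u_{b'}v_{b'}^T$, which converts every hypothesis into the vanishing of one of the scalars $v^Tu_b$, $v_b^Tu$, $v^Tu_{b'}$, $v^Tu$, and then count dimensions of the corresponding $\Gb_m$-stable families of pairs $(u,v)$, subtracting one for the free $\Gb_m$-action. Your identities are correct (in particular $\tr(uv^T)=v^Tu$ and $bab'=(v_b^Tu)(v^Tu_{b'})\,u_bv_{b'}^T$), the dimension bookkeeping through the quotient is sound, and the isolated contribution of $a=0$ is handled properly. What your method buys is a sharper structural description: away from $0$, the locus in (3) is exactly $\{ba=0\}\cup\{ab'=0\}$, and the branch analysis for the intersection with $\Mb_{0,0}$ is transparent; what the paper's method buys is that (3) needs no case analysis at all, since the rank computation plus irreducibility of the quadrics $\Mb_0$ and $\Mb_{0,0}$ immediately gives the dimension bounds. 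Both proofs are elementary and complete.
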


\begin{proof}
We first prove (1) and (2). Let $a=(a_{ij})\in\M(\C)$. Without loss of generality, after conjugation we may assume that
$$b=\begin{bmatrix}\lambda & 0\\ 0 & 0\end{bmatrix}\text{ for some $\lambda\in\C^\times$},\quad\text{or}\quad b=\begin{bmatrix}0 & 1\\ 0 & 0\end{bmatrix}.$$
In the former case: $ab=0$ if and only if $a_{11}=a_{21}=0$, and $ba=0$ if and only if $a_{11}=a_{12}=0$. In the latter case: $ab=0$ if and only if $a_{11}=a_{21}=0$, and $ba=0$ if and only if $a_{21}=a_{22}=0$. In both cases, if $ab=0$ or $ba=0$ then we automatically have $\det(a)=0$. Parts (1) and (2) follow immediately from these.

(3) Let us write $b=(b_{ij})$, and $b'=(b_{ij}')$. The linear map $a\mapsto bab'$ on $\M(\C)$ is given in terms of matrix coefficients $a=(a_{ij})$ by
$$\begin{bmatrix}a_{11}\\a_{12}\\a_{21}\\a_{22}\end{bmatrix}\mapsto\begin{bmatrix}
b_{11}b_{11}'&b_{11}b_{21}'&b_{12}b_{11}'&b_{12}b_{21}'\\
b_{11}b_{12}'&b_{11}b_{22}'&b_{12}b_{12}'&b_{12}b_{22}'\\
b_{21}b_{11}'&b_{21}b_{21}'&b_{22}b_{11}'&b_{22}b_{21}'\\
b_{21}b_{12}'&b_{21}b_{22}'&b_{22}b_{12}'&b_{22}b_{22}'
\end{bmatrix}\begin{bmatrix}a_{11}\\a_{12}\\a_{21}\\a_{22}\end{bmatrix}$$
and since $b,b'\in\M_0(\C)$ are both nonzero, the $4\times 4$ matrix above has rank $1$. Thus, the kernel of the linear map $a\mapsto bab'$ on $\M(\C)$ is a linear subspace of dimension $3$. Since the hypersurface $\M_0(\C)\subset\M(\C)$ is integral (and not linear), its intersection with the above kernel has dimension at most $2$, proving the first assertion.

We now prove the last assertion. We claim that the linear subspace $V$ consisting of $a\in\M(\C)$ satisfying $\tr(a)=0$ and $bab'=0$ is $2$-dimensional. Indeed, otherwise the second and third columns of the $4\times 4$ matrix above must be identically zero, contradicting the hypothesis that $b$ and $b'$ are both nonzero matrices. Now, since $V$ and $\M_{0,0}$ are both integral subschemes of dimension $2$ in $\M$, they must coincide or have intersection with dimension at most $1$. But $\M_{0,0}$ is not linear, and hence $V\cap\M_{0,0}$ has dimension at most $1$, proving the last assertion.
\end{proof}

\begin{lemma}
\label{a6}
Given any $a,b\in\M_0(\C)$, we have the following.
\begin{enumerate}
	\item[\textup{(1)}] $aba^*=0$ if and only if at least one of $ab,ba^*$ is zero.
	\item[\textup{(2)}] $\langle a,b\rangle=0$ if and only if at least one of $ab,ba^*,a^*b^*$ is zero.
\end{enumerate}
\end{lemma}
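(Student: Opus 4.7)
The plan is to prove (1) first by translating the matrix equations into kernel/image containments, and then bootstrap (2) from (1) by means of a rank calculation for $M=aba^*$. The crucial linear-algebraic fact I will use throughout is that a nonzero $a\in\Mb_0(\Cb)$ has rank one and satisfies $aa^*=a^*a=0$ (from $\det(a)=0$), so by dimension count $\Img(a^*)=\ker(a)$ and $\Img(a)=\ker(a^*)$; the same remarks apply to $b$.

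For (1), the ``if'' direction is immediate from associativity. For the converse, I may assume $a\neq 0$ (else everything is trivial) and rewrite $aba^*=0$ as $\Img(a^*)\subseteq\ker(ab)$, that is $\ker(a)\subseteq\ker(ab)$. Letting $v$ span $\ker(a)$, this says $bv\in\Cb\cdot v$, so $bv=\lambda v$ for some $\lambda\in\Cb$. If $\lambda=0$ then $v\in\ker(b)$, so $\ker(a)\subseteq\ker(b)$, which is equivalent to $ba^*=0$. If $\lambda\neq 0$, then $v$ is an eigenvector of $b$ with nonzero eigenvalue, and since $\det(b)=0$ this eigenvalue must be $\tr(b)\neq 0$; hence $b$ is diagonalizable and $\Img(b)=\Cb\cdot v=\ker(a)$, giving $ab=0$.

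For (2), the ``if'' direction is again obvious. For the converse, set $M=aba^*$ and note the identity $M^2=ab(a^*a)ba^*=\det(a)\cdot ab^2a^*=0$, so $M$ is nilpotent of rank at most one. If $M=0$, part (1) finishes the argument. Otherwise $M$ has rank exactly one, and combining $\Img(M)\subseteq\Img(a)$ and $\ker(M)\supseteq\ker(a^*)=\Img(a)$ with $\Img(M)\subseteq\ker(M)$ (which comes from $M^2=0$) forces $\ker(M)=\Img(a)=\ker(a^*)$ by dimension. The equation $\langle a,b\rangle=Mb^*=0$ therefore reads $\Img(b^*)\subseteq\ker(a^*)$, i.e.\ $a^*b^*=0$.

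The most delicate step is the eigenvector case in (1) when $bv=\lambda v$ with $\lambda\neq 0$: it is here that I crucially exploit $\det(b)=0$ to identify the eigenvalue as $\tr(b)$ and hence $\Img(b)$ with $\ker(a)$. Once (1) is established, (2) becomes a clean dimension count on the rank-one nilpotent matrix $M$, and no further casework is needed.
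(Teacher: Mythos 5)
Your proof is correct, and it takes a genuinely different route from the paper. The paper's argument is a coordinate computation: it conjugates $a$ into Jordan normal form (splitting into the cases $\tr(a)\neq0$ and $\tr(a)=0$), writes out $ab$, $ba^*$, $a^*b^*$, $aba^*$, and $\langle a,b\rangle$ entry by entry, and then derives a contradiction from $\det(b)=0$ in each case. You instead argue coordinate-freely: for nonzero $a\in\Mb_0(\Cb)$ the relations $aa^*=a^*a=\det(a)\mathbf1=0$ together with rank one give $\Img(a^*)=\ker(a)$ and $\Img(a)=\ker(a^*)$, which reduces (1) to the behavior of $b$ on the single line $\ker(a)$ (your eigenvector dichotomy is sound, though in the case $\lambda\neq0$ it is enough to note $v=\lambda^{-1}bv\in\Img(b)$, so $\Img(b)=\ker(a)$ and $ab=0$, without invoking diagonalizability or identifying $\lambda=\tr(b)$). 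Your treatment of (2) is also structurally different: the paper proves (2) by a fresh round of casework, whereas you deduce it from (1) via the identity $M^2=\det(a)\,ab^2a^*=0$ for $M=aba^*$ and the kernel dimension count $\ker(M)=\ker(a^*)$ when $M\neq0$, so that $Mb^*=0$ forces $a^*b^*=0$. The trade-off: the paper's computation is elementary and completely explicit, while your argument is shorter, avoids the normal-form case split, and makes the logical dependence of (2) on (1) transparent.
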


\begin{proof}
For both parts (1) and (2), one implication is clear: if one of $ab,ba^*,a^*b^*$ is zero, then $\langle a,b\rangle=0$, and if one of $ab,ba^*$ is zero, then $aba^*=0$. We shall now prove the converses. Since the lemma is trivial if $a$ or $b$ is zero, we may assume $a,b\in\M_0(\C)$ are both nonzero. Without loss of generality, we shall assume after conjugation that $a$ is in Jordan normal form, so that
$$
a=\begin{bmatrix}\lambda & 0\\ 0 & 0\end{bmatrix}\text{ for some $\lambda\in\C^\times$},\quad\text{or}\quad a=\begin{bmatrix}0 & 1\\ 0 & 0\end{bmatrix}.
$$
In the former case, i.e.~when $\tr(a)=\lambda\neq0$, writing $b=(b_{ij})$ we have
$$ab=\begin{bmatrix}\lambda b_{11} & \lambda b_{12}\\ 0 & 0\end{bmatrix},\quad ba^*=\begin{bmatrix}0 & \lambda b_{12}\\0 & \lambda b_{22}\end{bmatrix},\quad a^*b^*=\begin{bmatrix}0 & 0\\-\lambda b_{21} & \lambda b_{11}\end{bmatrix},$$
and
$$aba^*=\begin{bmatrix}0 & \lambda^2b_{12}\\0 & 0\end{bmatrix},\quad \langle a,b\rangle=\begin{bmatrix}-\lambda^2 b_{12}b_{21} & \lambda^2 b_{12}b_{11}\\ 0 & 0\end{bmatrix}.$$
In the latter case, i.e.~when $\tr(a)=0$, writing $b=(b_{ij})$ we have
$$ab=\begin{bmatrix}b_{21} & b_{22}\\ 0 & 0\end{bmatrix},\quad ba^*=\begin{bmatrix}0 & -b_{11}\\0 & -b_{21}\end{bmatrix},\quad a^*b^*=\begin{bmatrix}b_{21} & -b_{11}\\0 & 0\end{bmatrix},$$
and
$$aba^*=\begin{bmatrix}0 & -b_{21}\\0 & 0\end{bmatrix},\quad\langle a,b\rangle =\begin{bmatrix}b_{21}^2 & -b_{21}b_{11}\\ 0 & 0\end{bmatrix}.$$
We now proceed with our proof.

(1) Suppose that $ab$ and $ba^*$ are nonzero yet $aba^*=0$. The condition $aba^*=0$ would imply that $b_{12}=0$ if $\tr(a)\neq0$, and $b_{21}=0$ if $\tr(a)=0$. In both cases, since $ab,ba^*\neq0$ we must have $b_{11},b_{22}\neq0$, contradicting the assumption that $\det(b)=0$. Thus, we must have $aba^*=0$.

(2) Assume toward contradiction that $ab,ba^*,a^*b^*$ are all nonzero yet $\langle a,b\rangle=0$. Consider the case $\tr(a)=\lambda\neq0$. The condition $\langle a,b\rangle=0$ would imply that $b_{12}=0$ or $b_{21}=b_{11}=0$. If $b_{12}=0$, then since $ab,ba^*\neq0$ we must have $b_{11},b_{22}\neq0$, contradicting the assumption that $\det(b)=0$. If $b_{21}=b_{11}=0$, then $a^*b^*=0$ contradicting our assumption. Thus, we must have $\langle a,b\rangle\neq0$.

It remains to treat case $\tr(a)=0$. The condition $\langle a,b\rangle=0$ would imply that $b_{21}=0$. But if $b_{21}=0$, then since $ab,ba^*\neq0$ we must have $b_{11},b_{22}\neq0$, contradicting the assumption that $\det(b)=0$. Thus, we must have $\langle a,b\rangle\neq0$, as desired. This finishes the proof.
\end{proof}

\begin{lemma}
\label{a8}
The locus of $(a,b)\in\M_0^2(\C)$ such that at least two of $ab$, $ba^*$, and $a^*b^*$ are zero has dimension at most $4$.
\end{lemma}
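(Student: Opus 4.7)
The plan is to bring $a$ to Jordan normal form and compute directly. Using that $*$ is an involution on $2\times 2$ matrices with $(xy)^* = y^* x^*$, we have $a^*b^* = (ba)^*$, which vanishes iff $ba = 0$. Hence the locus in question decomposes as $W = W_{12} \cup W_{13} \cup W_{23}$, where $W_{12} = \{ab = 0,\ ba^* = 0\}$, $W_{13} = \{ab = 0,\ ba = 0\}$, and $W_{23} = \{ba^* = 0,\ ba = 0\}$; it then suffices to show each has dimension at most $4$.

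For each $W_{ij}$, I will project via $\pi : \Mb_0^2 \to \Mb_0$ onto the first factor and stratify the base as $\{0\} \sqcup \Ncal \sqcup \Mcal$, where $\Ncal$ is the $2$-dimensional locus of nonzero nilpotent matrices (where $\tr(a) = 0$) and $\Mcal = \{a \in \Mb_0 : \tr(a) \neq 0\}$ is open of dimension $3$. Since the defining equations are equivariant under simultaneous $\SL_2$-conjugation, the fiber dimension of $\pi : W_{ij} \to \Mb_0$ is constant over each stratum, and it suffices to compute it at a Jordan-form representative: $a = \mathrm{diag}(\lambda, 0)$ with $\lambda \neq 0$ over $\Mcal$, and $a = \left[\begin{smallmatrix}0 & 1 \\ 0 & 0\end{smallmatrix}\right]$ over $\Ncal$. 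Writing $b = (b_{ij})$, each of $ab = 0$, $ba = 0$, $ba^* = 0$ then becomes the vanishing of two specific entries of $b$, and the resulting linear subspace lies entirely in $\Mb_0$, so the constraint $\det(b) = 0$ is automatic and does not further reduce the dimension.

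A short entry-wise computation gives the fiber dimensions over $\Mcal$ of $W_{12}, W_{13}, W_{23}$ as $1, 1, 0$; over $\Ncal$ as $1, 1, 2$; over $\{0\}$ the fiber is all of $\Mb_0$, of dimension $3$. This bounds $\dim W_{ij} \leq 4$ in each case, and hence $\dim W \leq 4$. The main subtle point is the fiber of $W_{23}$ over $\Ncal$: since $a^* = -a$ when $\tr(a) = 0$, the two conditions $ba^* = 0$ and $ba = 0$ collapse into one, yielding a $2$-dimensional fiber (instead of the smaller dimensions appearing elsewhere) and the sharp bound $2 + 2 = 4$. All other subcases are routine.
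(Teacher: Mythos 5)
Your proof is correct, but it takes a genuinely different route from the paper's. You first use $a^*b^*=(ba)^*$ to replace the third condition by $ba=0$, and then bound each of the three pairwise loci by fibering over the first factor and computing fiber dimensions entrywise after putting $a$ in Jordan form, stratifying the base as $\{0\}\sqcup\Ncal\sqcup\Mcal$; the totals $3+1$, $2+2$, $0+3$ give the bound $4$. The paper keeps the same three cases but argues coordinate-free, reusing its auxiliary lemmas: from $ab=ba^*=0$ it gets $\tr(b)a=a(b+b^*)=0$, hence $\tr(b)=0$, and bounds the locus by $2+2$ via Lemma \ref{a5}; from $ab=a^*b^*=0$ it gets $[a,b]=0$ and bounds by $3+1$ via Lemma \ref{a2}; and the case $ba^*=a^*b^*=0$ is reduced to the first by exchanging the roles of $a$ and $b$. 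Your version is self-contained (no appeal to Lemmas \ref{a2} and \ref{a5}) and makes visible where the bound is sharp, namely the collapse $ba^*=-ba$ over nilpotent $a$, giving a $2$-dimensional fiber over the $2$-dimensional nilpotent stratum; the paper's version is shorter given the lemmas already proved and avoids matrix-entry computations. One small wording point: $\SL_2$-equivariance gives constancy of the fiber dimension only along conjugation orbits, and $\Mcal$ is a one-parameter family of orbits; this costs you nothing, since your representative $\diag(\lambda,0)$ has arbitrary $\lambda\neq0$ and the vanishing conditions do not involve $\lambda$, but it would be cleaner to say the fiber dimension is constant along orbits and independent of $\lambda$ by inspection.
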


\begin{proof}
Since the locus where $a=0$ or $b=0$ has dimension at most $3$, we may restrict our attention to the locus with $a,b\neq0$.
\begin{enumerate}
	\item[\textup{(1)}] Consider the locus $ab=ba^*=0$. We then have $ab^*=(ba^*)^*=0$ and hence $\tr(b)a=ab+ab^*=0$ and hence $\tr(b)=0$, since $a\neq0$ by assumption. Thus, $b$ varies over a locus of dimension $2$. For each fixed value of $b$, the condition $ab=0$ implies that $a$ varies over a locus of dimension $2$ by Lemma \ref{a5}. Hence, the locus where $ab=ba^*=0$ has dimension at most $2+2=4$.
	\item[\textup{(2)}] Consider the locus $ab=a^*b^*=0$. We then have $ba=(a^*b^*)^*=0$ and hence $[a,b]=ab-ba=0$. By Lemma \ref{a2}, for fixed $a$ (assumed nonzero) we see that $b$ varies over a locus of dimension at most $1$. Hence, the locus where $ab=a^*b^*=0$ has dimension at most $3+1=4$.
	\item[\textup{(3)}] Consider the locus $ba^*=a^*b^*=0$. We then have $ba=0$ and $ab^*=0$. Replacing the role of $b$ and $a$, we reduce to case (1), showing that the said locus has dimension at most $4$.
\end{enumerate}
This concludes the proof of the lemma.
\end{proof}

\begin{lemma}
\label{a7}
The morphism:
\begin{enumerate}
	\item[\textup{(1)}] $\M_{0,0}^2\to\M_0$ given by $(a_1,a_2)\mapsto a_1a_2$ is surjective over $\M_0\setminus\M_{0,0}$. The preimage of this morphism over $\M_{0,0}\setminus\{0\}$ is empty.
	\item[\textup{(2)}] $(\M_0\setminus\M_{0,0})\times\M_{0,0}\to\M_0$ given by $(a_1,a_2)\mapsto a_1a_2$ is surjective.
	\item[\textup{(3)}] $\langle-,-\rangle:\M_0^2\to\M_0$ is surjective.
\end{enumerate}
\end{lemma}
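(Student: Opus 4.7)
The plan is to handle the three parts of the lemma separately, in each case exhibiting either an explicit factorization or an obstruction. Throughout, I make constant use of $\SL_2$-equivariance of the morphisms under simultaneous conjugation, together with the classification of $\SL_2$-conjugacy classes in $\Mb_0$: the origin $\{0\}$, the single orbit of nonzero nilpotents $\Mb_{0,0}\setminus\{0\}$, and, for each $\lambda\in\Cb^\times$, the rank-$1$ orbit represented by $\begin{bmatrix}\lambda & 0\\ 0 & 0\end{bmatrix}$.

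For part (1), the image of $\Mb_{0,0}^2\to\Mb_0$ lies in $\Mb_0$ by multiplicativity of the determinant. For the emptiness of the preimage over $\Mb_{0,0}\setminus\{0\}$, I would observe that $\tr(a_i)=0$ gives $a_i^*=-a_i$ and hence $a_i^2=-a_ia_i^*=0$; moreover, $(a_1a_2)^*=a_2^*a_1^*=a_2a_1$ together with $x+x^*=\tr(x)\mathbf{1}$ yields
$$a_1a_2+a_2a_1=\tr(a_1a_2)\,\mathbf{1}.$$
If $a_1a_2\in\Mb_{0,0}$, then $\tr(a_1a_2)=0$ forces $a_2a_1=-a_1a_2$, whence $(a_1a_2)a_1=-a_1^2a_2=0$ and $(a_1a_2)a_2=a_1a_2^2=0$. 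Since $\Img(a_i)=\Ker(a_i)$ is a line for each nonzero nilpotent $a_i$, a quick case split (on whether $\Ker(a_1)=\Ker(a_2)$ or the two are transverse) forces $a_1a_2=0$. For surjectivity over $\Mb_0\setminus\Mb_{0,0}$, by equivariance it suffices to realize the normal form, via
$$\begin{bmatrix}\lambda & 0\\ 0 & 0\end{bmatrix}=\begin{bmatrix}0 & 1\\ 0 & 0\end{bmatrix}\begin{bmatrix}0 & 0\\ \lambda & 0\end{bmatrix}.$$

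For part (2), I would give an explicit factorization for one representative in each $\SL_2$-orbit of $\Mb_0$, namely the identities
$$0=\begin{bmatrix}1 & 0\\ 0 & 0\end{bmatrix}\begin{bmatrix}0 & 0\\ 1 & 0\end{bmatrix},\quad \begin{bmatrix}0 & 1\\ 0 & 0\end{bmatrix}=\begin{bmatrix}1 & 0\\ 0 & 0\end{bmatrix}\begin{bmatrix}0 & 1\\ 0 & 0\end{bmatrix},\quad \begin{bmatrix}\lambda & 0\\ 0 & 0\end{bmatrix}=\begin{bmatrix}1 & \lambda\\ 0 & 0\end{bmatrix}\begin{bmatrix}0 & 0\\ 1 & 0\end{bmatrix},$$
in each of which the first factor lies in $\Mb_0\setminus\Mb_{0,0}$ and the second in $\Mb_{0,0}$. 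For part (3), taking $a=\begin{bmatrix}1 & 0\\ 0 & 0\end{bmatrix}$ and $b=(b_{ij})\in\Mb_0$, a short direct computation yields
$$\langle a,b\rangle=aba^*b^*=\begin{bmatrix}-b_{12}b_{21} & b_{12}b_{11}\\ 0 & 0\end{bmatrix}.$$
The choices $b=0$, $b=\begin{bmatrix}1 & 1\\ 0 & 0\end{bmatrix}$, and $b=\begin{bmatrix}1 & 1\\ -\lambda & -\lambda\end{bmatrix}$ (all in $\Mb_0$) respectively yield $0$, a nonzero nilpotent $\begin{bmatrix}0 & 1\\ 0 & 0\end{bmatrix}$, and the rank-$1$ matrix $\begin{bmatrix}\lambda & 1\\ 0 & 0\end{bmatrix}$ of trace $\lambda$, thus hitting one element of each $\SL_2$-orbit in $\Mb_0$; equivariance then delivers surjectivity.

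The main obstacle will be the emptiness claim in part (1), since the trace-zero hypothesis on $a_1a_2$ only directly provides the anticommutation $a_2a_1=-a_1a_2$, and one needs to combine this with the rank-$1$ kernel/image structure of the nilpotent matrices $a_1,a_2$ to actually force $a_1a_2=0$. The remaining work reduces to explicit verifications on small matrices in normal form.
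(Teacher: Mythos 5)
Your proposal is correct, and it reaches the lemma by a genuinely more hands-on route than the paper. The paper obtains surjectivity in all three parts over the trace-nonzero locus from Lemma \ref{a1}: purity of the invariant ring lets one prescribe the five invariants, match $\tr(a_1a_2)$ (or $\tr\langle a_1,a_2\rangle$) with $\tr(b)$, and then conjugate, with explicit matrices used only for the residual trace-zero targets; and for the emptiness claim in (1) it conjugates $a_1$ into the normal form $\left[\begin{smallmatrix}0&1\\0&0\end{smallmatrix}\right]$ and reads off that a trace-zero product must vanish. You bypass Lemma \ref{a1} entirely: you enumerate the conjugation orbits of $\Mb_0$ (namely $0$, the nonzero nilpotents, and $\diag(\lambda,0)$ for $\lambda\neq0$), exhibit an explicit preimage of one representative of each orbit in each of the three settings, and conclude by equivariance (which is legitimate, since $(gxg^{-1})^*=gx^*g^{-1}$ for $g\in\SL_2$ and conjugation preserves $\Mb_{0,0}$ and $\Mb_0\setminus\Mb_{0,0}$); all your factorizations and the computation $\langle\diag(1,0),b\rangle=\left[\begin{smallmatrix}-b_{12}b_{21}&b_{11}b_{12}\\0&0\end{smallmatrix}\right]$ check out. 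For the emptiness claim your identity-based argument replaces the paper's normal-form computation and does close: from $a_i^*=-a_i$, $a_i^2=0$, $(a_1a_2)^*=a_2a_1$ and $\tr(a_1a_2)=0$ you get $a_2a_1=-a_1a_2$, hence $(a_1a_2)a_1=(a_1a_2)a_2=0$, and since each nonzero nilpotent has image equal to its kernel, either the two kernels coincide (then $a_1a_2=0$ outright) or the two images span $\Cb^2$ and are annihilated by $a_1a_2$, so again $a_1a_2=0$. What each approach buys: the paper's appeal to Lemma \ref{a1} requires no orbit bookkeeping and is the same template it reuses in Lemma \ref{a3}; yours is self-contained and coordinate-free at the one delicate point, at the mild cost of invoking the orbit classification of $\Mb_0$ (valid over $\Cb$, where $\GL_2$- and $\SL_2$-conjugacy coincide because scalars act trivially).
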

\begin{proof}
(1) By Lemma \ref{a1}, Given any $b\in\M_0(\C)$, there exist $a_1,a_2\in\M_{0,0}(\C)$ such that $\tr(a_1a_2)=\tr(b)$. If $\tr(b)\neq0$, then this implies that $(ga_1g^{-1})(ga_2g^{-1})=b$ for some $g\in\SL_2(\C)$, and $ga_1g^{-1},ga_2g^{-1}\in\M_{0,0}(\C)$. For the last statement, note first that
$$\begin{bmatrix}0 & 1\\ 0 & 0\end{bmatrix}\begin{bmatrix}x & y\\ z & w\end{bmatrix}=\begin{bmatrix}z & w\\ 0 & 0\end{bmatrix}$$
and, provided that $\left[\begin{smallmatrix}x & y\\ z & w\end{smallmatrix}\right]\in\M_{0,0}(\C)$, if the right hand side has trace zero then it must in fact be zero. Since any pair $(a_1,a_2)\in\M_{0,0}^2(\C)$ with $a_1$ nonzero is conjugate to a pair of the form $(\left[\begin{smallmatrix}0 & 1\\ 0 & 0\end{smallmatrix}\right],\left[\begin{smallmatrix}x & y\\ z & w\end{smallmatrix}\right])$, the last statement follows.

(2) The same argument as in the proof of part (1) goes through when $b\in\M_0(\C)$ satisfies $\tr b\neq0$ or $b=0$. Suppose that $\tr b=0$ and $b\neq0$. Up to conjugation by $\SL_2$, we may assume that $b=\left[\begin{smallmatrix}0 & 1\\ 0 & 0\end{smallmatrix}\right]$. Note then that we have
$$\begin{bmatrix}1 & 1\\ 0 & 0\end{bmatrix}\begin{bmatrix}0 & 1\\0 & 0\end{bmatrix}=\begin{bmatrix}0 & 1\\ 0 & 0\end{bmatrix}.$$
This proves that the morphism $(\M_0\setminus\M_{0,0})\times\M_{0,0}\to\M_0$ is surjective, as desired.

(3) By Lemma \ref{a1}, given any $b\in\M_0(\C)$ there exist $a_1,a_2\in\M_0(\C)$ such that $\tr\langle a_1,a_2\rangle=\tr b$. If $\tr b\neq0$, then $\langle ga_1g^{-1},ga_2g^{-1}\rangle=b$ for some $g\in\SL_2(\C)$. Consider now $\tr b=0$. We have $\langle0,0\rangle=0$. Furthermore, we have
$$\left\langle\begin{bmatrix}1 & -1\\ 0 & 0\end{bmatrix},\begin{bmatrix}1 & 1\\ 0 & 0\end{bmatrix}\right\rangle=\begin{bmatrix}1 & -1\\ 0 & 0\end{bmatrix}\begin{bmatrix}1 & 1\\ 0 & 0\end{bmatrix}\begin{bmatrix}0 & 1\\ 0 & 1\end{bmatrix}\begin{bmatrix}0 & -1\\ 0 & 1\end{bmatrix}=\begin{bmatrix}0 & 2\\ 0 & 0\end{bmatrix}$$
which shows that, letting $a_1$ and $a_2$ respectively be the two matrices on the left hand side, there exists $g\in\SL_2(\C)$ such that $\langle ga_1g^{-1},ga_2g^{-1}\rangle=b$, as desired.
\end{proof}

\end{document}